\newtheorem{theorem}{Theorem}
\newtheorem{lemma}{Lemma}
\newtheorem{remark}{Remark}
\newtheorem{definition}{Definition}
\newtheorem{assumption}{Assumption}
\newtheorem{example}{Example}
\title{Theoretical Analysis of Primal-Dual Algorithm for Non-Convex Stochastic Decentralized Optimization}
\author{%
  Yuki Takezawa \\
  Kyoto University and RIKEN AIP\\
  \texttt{yuki-takezawa@ml.ist.i.kyoto-u.ac.jp} \\
  \And
  Kenta Niwa \\
  NTT Communication Science Laboratories \\
  \texttt{kenta.niwa.bk@hco.ntt.co.jp} \\
  \And
  Makoto Yamada \\
  Kyoto University and RIKEN AIP \\
  \texttt{myamada@i.kyoto-u.ac.jp}
}
\begin{document}
\maketitle

\begin{abstract}
In recent years, decentralized learning has emerged as a powerful tool not only for large-scale machine learning,
but also for preserving privacy.
One of the key challenges in decentralized learning is that the data distribution held by each node is statistically heterogeneous.
To address this challenge, the primal-dual algorithm called the Edge-Consensus Learning (ECL) was proposed
and was experimentally shown to be robust to the heterogeneity of data distributions. 
However, the convergence rate of the ECL is provided only when the objective function is convex, 
and has not been shown in a standard machine learning setting where the objective function is non-convex.
Furthermore, the intuitive reason why the ECL is robust to the heterogeneity of data distributions has not been investigated.
In this work, we first investigate the relationship between the ECL and Gossip algorithm
and show that the update formulas of the ECL can be regarded as correcting the local stochastic gradient in the Gossip algorithm.
Then, we propose the Generalized ECL (G-ECL), which contains the ECL as a special case,
and provide the convergence rates of the G-ECL in both (strongly) convex and non-convex settings,
which do not depend on the heterogeneity of data distributions.
Through synthetic experiments, we demonstrate that the numerical results of both the G-ECL and ECL coincide with the convergence rate of the G-ECL.
\end{abstract}

\section{Introduction}
Neural networks have achieved promising results in many tasks such as natural language processing \citep{devlin2019bert,brown2020language} and image processing \citep{sarlin2020superglue,liu2021swin}.
To train a large-scale neural network efficiently, decentralized learning is a powerful tool.
Decentralized learning allocates data into multiple nodes (e.g., servers) and trains a neural network in parallel.
Because decentralized learning allows us to train a neural network without aggregating all the data in one server,
it has also attracted considerable attention from the perspective of privacy preservation.

One of the most widely used algorithms for decentralized learning is the decentralized parallel SGD (D-PSGD) \citep{lian2017can} (a.k.a. the Gossip algorithm).
Recently, the effect of various variables on the convergence rate of the Gossip algorithm has been well investigated. 
(for example, noise of stochastic gradient, 
the structure of the network, and the heterogeneity of data distributions held by each node)
\citep{lian2017can,lian2018asynchronous,koloskova2020unified,kong2021consensus}.
These theoretical results show that the convergence rate of the Gossip algorithm slows down when the data distribution held by each node is statistically heterogeneous.

To address the heterogeneity of data distributions,
the primal-dual algorithm using Douglas-Rachford splitting \citep{douglas1955numerical}
called the Edge-Consensus Learning (ECL) \citep{niwa2020edge} has been proposed.
In image classification tasks, it was shown that 
the ECL outperforms the Gossip algorithm when the data distributions held by each node are statistically heterogeneous,
and the ECL has been experimentally shown to be robust to the heterogeneity of data distributions.
Recently, \citet{rajawat2020primal} provided the convergence rate of the ECL\footnote{The algorithm called the decentralized primal-dual algorithm in \citep{rajawat2020primal} is equivalent to the ECL when $\theta=1$.} when the objective function is convex
and showed that it does not depend on the heterogeneity of data distributions.
However, in a standard machine learning setting where the objective function is non-convex,
the convergence rate of the ECL has not been shown.\footnote{The previous work \cite{niwa2021asynchronous} attempted to analyze the convergence rates of the ECL on both (strongly) convex and non-convex settings. However, strong approximations were used in the proofs, which do not hold in practice. This is discussed in detail in Sec. \ref{sec:issues_of_existing_proof}.}
Furthermore, the relationship between the ECL and Gossip algorithm has not been investigated,
including the differences between the ECL and Gossip algorithm
and an intuitive reason why the ECL is robust to the heterogeneity of data distributions.

In this work, we propose the \textbf{Generalized ECL (G-ECL)}, which contains the ECL as a special case,
and provide the convergence rates of the G-ECL in both (strongly) convex and non-convex settings.
More specifically, we investigate the relationship between the ECL and Gossip algorithm
and show that the update formulas of the ECL can be regarded as correcting the stochastic gradient of each node in the Gossip algorithm.
Then, to make the convergence analysis tractable,
we increase the degrees of freedom of the hyperparameters of the ECL 
and propose the G-ECL, which contains the ECL as a special case.
By using the proof techniques of the Gossip algorithm \citep{koloskova2020unified},
we provide the convergence rates of the G-ECL in (strongly) convex and non-convex settings
and show that they do not depend on the heterogeneity of data distributions.
Table \ref{table:convergence_rate} summarizes the convergence rates of the Gossip algorithm, ECL and G-ECL.
Through synthetic experiments,
we demonstrate that the numerical results of both the G-ECL and ECL are consistent with the convergence rate of the G-ECL.

Our contributions are summarized as follows:
\begin{itemize}
    \item In this work, we investigate the relationship between the Gossip algorithm and ECL and show that the update formulas of the ECL can be regarded as correcting the stochastic gradient of each node in the Gossip algorithm (Sec. \ref{sec:reformulation}).
    \item We propose the G-ECL, which contains the ECL as a special case, and provide the convergence rates of the G-ECL in (strongly) convex and non-convex settings (Sec. \ref{sec:gecl} and \ref{sec:convergence_results}). Then, we show that the convergence rates of the G-ECL do not depend on the heterogeneity of data distributions.
    \item Through synthetic experiments, we demonstrate that the numerical results of both the ECL and G-ECL coincide with the convergence rates of the G-ECL (Sec. \ref{sec:experimental_results}).
\end{itemize}

\textbf{Notation:}
In this work, we denote $[n] = \{1,2,\cdots,n\}$ for any $n\in \mathbb{N}$.
We write $\mathbf{I}$ for the identity matrix,
$\mathbf{1}$ for the vector with all ones,
$\| \cdot \|$ for L2 norm,
and $\| \cdot \|_F$ for the Frobenius norm.

\begin{table*}[t]
\begin{minipage}{\textwidth}
\caption{Convergence rates of the D-PSGD, ECL, and G-ECL. We define $\tilde{\zeta}^2 \coloneqq \frac{1}{p} \zeta^2$, and the definition of other parameters is shown in Sec. \ref{sec:preliminary}, \ref{sec:setup} and \ref{sec:convergence_results}. Note that the D-PSGD requires the assumption about the heterogeneity of data distributions, while the G-ECL and ECL do not require.}
\label{table:convergence_rate}
\center
\resizebox{\linewidth}{!}{
\begin{tabular}{cc}
\begin{tabular}{lc}
\toprule
                                     & Non-Convex \\
\midrule
D-PSGD \citep{koloskova2020unified} 
        & $\mathcal{O} \left( \!\! \sqrt{\frac{r_0 \sigma^2 L}{n R}} + \left( \frac{ r_0^2 L^2 (\sigma^2 + \tilde{\zeta}^2)  (1 - p)}{p R^2} \right)^{\frac{1}{3}} \!\!\! + \frac{s r_0}{R} \!\! \right)$ \\
ECL \citep{rajawat2020primal} 
        & N/A \\
G-ECL (this work) 
        & $\mathcal{O} \left( \!\! \sqrt{\frac{r_0 \sigma^2 L}{n R}} + \left( \frac{ r_0^2 L^2 \tilde{\sigma}^2}{p R^2} \right)^{\frac{1}{3}} \!\!\! + \frac{s r_0}{R} \!\! \right)$ \\
\bottomrule
\end{tabular}

& \\
& \\

\begin{tabular}{lccc}
\toprule
                                     & Strongly Convex & General Convex\\
\midrule
D-PSGD  \citep{koloskova2020unified} 
        & $\tilde{\mathcal{O}} \left( r_0 s \exp \left[ \frac{-\mu(R+1)}{s} \right] + \frac{\sigma^2}{\mu n R} + \frac{L(\sigma^2 + \tilde{\zeta}^2) (1 - p)}{p \mu^2 R^2}\right)$ 
        & $\mathcal{O} \left( \!\! \sqrt{\frac{r_0 \sigma^2}{n R}} + \left( \frac{r_0^2  L (\sigma^2 + \tilde{\zeta}^2) (1 - p)}{ p R^2 } \right)^{\frac{1}{3}} \!\!\! + \frac{s r_0}{R} \!\! \right)$ \\
ECL \citep{rajawat2020primal}\footnote{\citet{rajawat2020primal} evaluated $\frac{1}{n} \sum_{i=1}^n \mathbb{E} [D_{f_i}(\mathbf{x}_i^{(r)}, \mathbf{x}^\star)]$ where $D_{f_i}$ is Bregman divergence associated with $f_i$. Note that if $f_i$ is strictly convex, $D_{f_i}(\mathbf{x}_i^{(r)}, \mathbf{x}^\star)=0$ is equivalent to $\mathbf{x}_i^{(r)}=\mathbf{x}^\star$.} 
        & N/A 
        & $\mathcal{O}\left( \sqrt{\frac{\sigma^2}{R}} + \frac{|\mathcal{E}|}{n R} \right)$  \\
G-ECL (this work) 
        & $\tilde{\mathcal{O}} \left( r_0 s \exp \left[ \frac{-\mu (R+1)}{s} \right] + \frac{\sigma^2}{\mu n R} + \frac{L \tilde{\sigma}^2}{p \mu^2 R^2} \right)$  
        & $\mathcal{O} \left( \!\! \sqrt{\frac{r_0 \sigma^2}{n R}} + \left( \frac{r_0^2  L \tilde{\sigma}^2 }{ p R^2 } \right)^{\frac{1}{3}} \!\!\! + \frac{s r_0}{R} \!\! \right)$ \\
\bottomrule
\end{tabular}
\end{tabular}}
\end{minipage}
\vskip -0.15 in
\end{table*}

\section{Preliminary}
\label{sec:preliminary}
In this section, we briefly introduce the problem setting of decentralized learning, Gossip algorithm, and ECL.
A more detailed discussion of related works is presented in Sec. \ref{sec:related_work}. 

\subsection{Problem Setting}
\label{sec:problem_setting}
We introduce a problem setting for decentralized learning.
Let $G=(\mathcal{V},\mathcal{E})$ be an undirected graph representing the network topology of nodes
where $\mathcal{V}$ denotes a set of nodes and $\mathcal{E}$ denotes a set of edges.
In the following, we denote $\mathcal{V}$ as a set of integers $\{ 1, 2, \ldots, |\mathcal{V}|\}$ for simplicity.
We denote the set of neighbors of node $i$ as $\mathcal{N}_i \coloneqq \{ j \in \mathcal{V} | (i,j) \in \mathcal{E} \}$
and we denote $\mathcal{N}_i^+ \coloneqq \mathcal{N}_i \cup \{ i \}$.
In decentralized learning, nodes $i$ and $j$ are allowed to communicate the parameters only if $(i,j)\in\mathcal{E}$.
The decentralized learning problem is formulated as follows:
\begin{align}
    \label{eq:decentralized_learning}
    \min_{\mathbf{x} \in \mathbb{R}^d} \left[ f(\mathbf{x}) \coloneqq \frac{1}{n} \sum_{i=1}^{n} f_i (\mathbf{x}) \right],
    \qquad 
    f_i(\mathbf{x}) \coloneqq \mathbb{E}_{\xi_i \sim \mathcal{D}_i} \left[ F_i(\mathbf{x} ; \xi_i)\right],
\end{align}
where $\mathbf{x}\in\mathbb{R}^d$ denotes the model parameter, 
$f_i : \mathbb{R}^d \rightarrow \mathbb{R}$ denotes the objective function of node $i$,
$n \coloneqq |\mathcal{V}|$ is the number of nodes, 
and $\mathcal{D}_i$ is the data distribution held by node $i$.
In this work,
we assume that only the stochastic gradient $\nabla F_i(\mathbf{x} ; \xi_i)$ is accessible and the full gradient $\nabla f_i (\mathbf{x})$ is inaccessible
and analyze the convergence rate in both cases when $f_i$ is (strongly) convex and when $f_i$ is non-convex,

\subsection{Gossip Algorithm}
One of the most popular algorithms for decentralized learning is the D-PSGD \citep{lian2017can} (a.k.a. the Gossip algorithm).
In the Gossip algorithm, each node updates its model parameters as follows:
\begin{align}
    \label{eq:gossip}
    \mathbf{x}_i^{(r+1)} &= \sum_{j \in \mathcal{N}_i^+} W_{ij} \left( \mathbf{x}_j^{(r)} - \eta \nabla F_j(\mathbf{x}_j^{(r)} ; \xi_j^{(r)}) \right),
\end{align}
where $\mathbf{x}_i \in \mathbb{R}^d$ is the model parameter of node $i$,
$\eta>0$ is the step size, and $W_{ij}\in [0,1]$ is the weight of the edge $(i,j)\in\mathcal{E}$.
Let $\mathbf{W}$ be an $n\times n$ matrix whose $(i,j)$-element is $W_{ij}$ if $j \in \mathcal{N}_i^+$ and $0$ otherwise.
In the Gossip algorithm, $\mathbf{W}$ is assumed to be a mixing matrix defined as follows.

\begin{definition}[Mixing Matrix]
If $\mathbf{W}\in [0,1]^{n\times n}$ is symmetric ($\mathbf{W}=\mathbf{W}^\top$) and doubly stochastic ($\mathbf{W}\mathbf{1}=\mathbf{1}, \mathbf{1}^\top \mathbf{W} = \mathbf{1}^\top$),
then $\mathbf{W}$ is called a mixing matrix.
\end{definition}

\subsection{Edge-Consensus Learning}
In this section, we briefly introduce the primal-dual algorithm using Douglas-Rachford splitting
called the Edge-Consensus Learning (ECL) \cite{niwa2020edge}.
Reformulating Eq. \eqref{eq:decentralized_learning},
we can define the primal problem as follows:
\begin{align}
\label{eq:primal}
    \min_{\mathbf{x}_1,\ldots,\mathbf{x}_n \in \mathbb{R}^d} \frac{1}{n} \sum_{i=1}^{n} f_i ( \mathbf{x}_i ) 
    \;\; \text{s.t.} \;\; \mathbf{A}_{i|j} \mathbf{x}_i + \mathbf{A}_{j|i} \mathbf{x}_j =  \mathbf{0}, \; (\forall (i,j)\in\mathcal{E}),
\end{align}
where $\mathbf{A}_{i|j}=\mathbf{I}$ when $i > j$ and $\mathbf{A}_{i|j}=-\mathbf{I}$ when $i<j$ for any $i,j\in[n]$.
Then, by solving the dual problem of Eq. \eqref{eq:primal} using Douglas-Rachford splitting \citep{douglas1955numerical},
the update formulas can be derived as follows \citep{sherson2019derivation,niwa2020edge}:
\begin{align}
    \label{eq:deterministic_ecl_1}
    \mathbf{x}_i^{(r+1)} &= \text{argmin}_{\mathbf{x}_i} \{ f_i (\mathbf{x}_i)
    + \sum_{j\in\mathcal{N}_i} \frac{\alpha_{i|j}}{2} \| \mathbf{A}_{i|j} \mathbf{x}_i - \mathbf{z}_{i|j}^{(r)} \|^2 \}, \\
    \label{eq:ecl_2}
    \mathbf{y}_{i|j}^{(r+1)} &= \mathbf{z}_{i|j}^{(r)} - 2 \mathbf{A}_{i|j} \mathbf{x}_i^{(r+1)}, \\
    \label{eq:ecl_3}
    \mathbf{z}_{i|j}^{(r+1)} &= (1-\theta) \mathbf{z}_{i|j}^{(r)} + \theta \mathbf{y}_{j|i}^{(r+1)},
\end{align}
where $\mathbf{y}_{i|j}\in\mathbb{R}^d$ and $\mathbf{z}_{i|j}\in\mathbb{R}^d$ are dual variables,
and $\theta \in (0, 1]$ and $\alpha_{i|j} \geq 0$ are hyperparameters.
In previous works \citep{niwa2020edge,niwa2021asynchronous},
hyperparameter $\{ \alpha_{i|j} \}_{ij}$ is set such that $\alpha_{i|j}=\alpha_i$ for all $(i,j) \in \mathcal{E}$.
However, in this work, we increase the degrees of freedom of the hyperparameter $\{ \alpha_{i|j} \}_{ij}$,
which plays an important role in discussing the relationship between the ECL and Gossip algorithm in Theorem \ref{theorem:mixing_matrix}.
Douglas-Rachford splitting has been well studied in the convex optimization literature
and converges linearly to the optimal solution \citep{ryu2015primer,bauschke2011convex,giselsson2017linear}.
Therefore, $\{ \mathbf{x}_i^{(r)} \}_i$ generated by Eqs. (\ref{eq:deterministic_ecl_1}-\ref{eq:ecl_3}) converges linearly to the optimal solution when $f_i$ is convex.

However, when $f_i$ is non-convex (e.g., a loss function of a neural network), Eq. \eqref{eq:deterministic_ecl_1} can not be solved in general.
Subsequently, \citet{niwa2020edge} proposed solving Eq. \eqref{eq:deterministic_ecl_1} approximately as follows:
\begin{align}
    \label{eq:stochastic_ecl}
    \mathbf{x}_i^{(r+1)} &= \text{argmin}_{\mathbf{x}_i} \{ \langle \mathbf{x}_i, \nabla F_i (\mathbf{x}^{(r)}_i ; \xi^{(r)}_i) \rangle
    + \frac{1}{2 \eta} \| \mathbf{x}_i - \mathbf{x}_i^{(r)} \|^2 
    + \sum_{j\in\mathcal{N}_i} \frac{\alpha_{i|j}}{2} \| \mathbf{A}_{i|j} \mathbf{x}_i - \mathbf{z}_{i|j}^{(r)} \|^2 \},
\end{align}
where $\eta>0$ corresponds to the step size.
The update formulas Eqs. (\ref{eq:ecl_2}-\ref{eq:stochastic_ecl}) are called the Edge-Consensus Learning (ECL).
The pseudo-code of the ECL is presented in Alg. \ref{alg:stochastic_ecl}
where $\textbf{Transmit}_{i\rightarrow j}(\cdot)$ denotes the operator that transmits parameters from node $i$ to node $j$
and $\textbf{Receive}_{i\leftarrow j}(\cdot)$ denotes the operator for node $i$ to receive parameters from node $j$.
Then, \citet{niwa2020edge,niwa2021asynchronous} experimentally showed that 
the ECL is robust to the heterogeneity of data distributions.
Recently, \citet{rajawat2020primal} provided the convergence rate of the ECL when $f_i$ is convex
and showed that it does not depend on the heterogeneity of data distributions.
However, when $f_i$ is non-convex, the convergence rate of the ECL has not been provided.
Furthermore, the relationship between the Gossip algorithm and ECL has not yet been investigated,
including the differences between the ECL and Gossip algorithm
and an intuitive reason why the ECL is robust to the heterogeneity of data distributions.

\begin{algorithm}[tb]
   \caption{Update procedure at node $i$ in the ECL.}
   \label{alg:stochastic_ecl}
\begin{algorithmic}[1]
   \STATE {\bfseries Input:}  Set $\{ \alpha_{i|j} \}_{ij}$ such that $\alpha_{i|j}=\alpha_{j|i} \geq 0$ for all $(i,j) \in \mathcal{E}$. Initialize $\mathbf{z}_{i|j}^{(0)}$ to zero and $\mathbf{x}_i^{(0)}$ with the same parameters for all $i \in [n]$.
   \FOR{$r = 0, 1, \ldots, R$}
   \STATE Sample $\xi_i^{(r)}$ and compute $\mathbf{g}_i^{(r)} \coloneqq \nabla F_i(\mathbf{x}_i^{(r)} ; \xi_i^{(r)})$.
   \STATE $\mathbf{x}_i^{(r+1)} \leftarrow (1 + \eta \sum_{j\in\mathcal{N}_i} \alpha_{i|j} )^{-1} \{ \mathbf{x}_i^{(r)} - \eta (\mathbf{g}_i^{(r)} - \sum_{j\in\mathcal{N}_i} \alpha_{i|j} \mathbf{A}_{i|j} \mathbf{z}_{i|j}^{(r)}) \}$.
   \FOR{$j \in \mathcal{N}_i$}
   \STATE $\mathbf{y}_{i|j}^{(r+1)} \leftarrow \mathbf{z}_{i|j}^{(r)} - 2  \mathbf{A}_{i|j} \mathbf{x}_i^{(r+1)}$.
   \STATE $\textbf{Transmit}_{i\rightarrow j}(\mathbf{y}_{i|j}^{(r+1)})$.
   \STATE $\textbf{Receive}_{i\leftarrow j}(\mathbf{y}_{j|i}^{(r+1)})$.
   \STATE $\mathbf{z}_{i|j}^{(r+1)} \leftarrow (1-\theta) \mathbf{z}_{i|j}^{(r)} + \theta \mathbf{y}_{j|i}^{(r+1)}$.
   \ENDFOR
   \ENDFOR
\end{algorithmic}
\end{algorithm}

\section{Relationship between ECL and Gossip Algorithm}
\label{sec:reformulation}
It can be observed that the ECL is different from the Gossip algorithm.
However, it is difficult to discuss what is different between the ECL and Gossip algorithm using Eq. \eqref{eq:gossip} and Eqs. (\ref{eq:ecl_2}-\ref{eq:stochastic_ecl}).
In this section, we discuss the relationship between the ECL and Gossip algorithm.
All proofs are provided in Appendix.

\textbf{Organization:}
The remainder of this section is organized as follows.
In Sec. \ref{sec:ecl_as_gossip:reformulation}, we show that
each node implicitly computes the weighted sum with its neighbors in the ECL as well as in the Gossip algorithm.
In Sec. \ref{sec:ecl_as_gossip:assumption_of_hyperparameter},
we show that these weights in the ECL become a mixing matrix when the hyperparameter is set appropriately
as well as in the Gossip algorithm.
In Sec. \ref{sec:ecl_as_gossip:initial_value}, we discuss the property of the sequence of the average $\frac{1}{n} \sum_{i=1}^n \mathbf{x}_i^{(r)}$ in the ECL.

\subsection{Reformulation}
\label{sec:ecl_as_gossip:reformulation}
To discuss the relationship between the ECL and Gossip algorithm,
we reformulate the update formulas of the ECL as follows.
\begin{theorem}
\label{theorem:reformulation2}
Suppose that the hyperparameter $\theta$ is $\frac{1}{2}$,
the dual variable $\mathbf{z}_{i|j}^{(0)}$ is initialized to $\mathbf{A}_{i|j} \mathbf{x}_j^{(0)}$,
and the hyperparameter $\{ \alpha_{i|j} \}_{ij}$ is set such that $\alpha_{i|j} = \alpha_{j|i} \geq 0$
for all $(i,j) \in \mathcal{E}$.
Then, the update formulas Eq. (\ref{eq:stochastic_ecl}) and Eqs. (\ref{eq:ecl_2}-\ref{eq:ecl_3}) are equivalent to the following:
\begin{align}
    \label{eq:reformulated_2_stochastic_ecl_1}
    \tilde{\mathbf{x}}_i^{(r)} &= \sum_{j\in\mathcal{N}_i^+} W_{ij} \mathbf{x}_j^{(r)}, \\
    \label{eq:reformulated_2_stochastic_ecl_2}
    \mathbf{x}_i^{(r+1)} &=  \tilde{\mathbf{x}}_i^{(r)} - \eta^\prime_i \left( \nabla F_i(\mathbf{x}_i^{(r)} ; \xi_i^{(r)}) - \mathbf{c}_i^{(r)} \right), \\
    \mathbf{c}_i^{(r+1)} 
    \label{eq:reformulated_2_stochastic_ecl_3}
    &= \sum_{j \in \mathcal{N}_i^+} W_{ij} \left( \mathbf{c}_j^{(r)} - \nabla F_j(\mathbf{x}_j^{(r)} ; \xi_j^{(r)}) \right)
    + \nabla F_i(\mathbf{x}_i^{(r)} ; \xi_i^{(r)})
    + \sum_{j \in \mathcal{N}_i} \frac{\alpha_{i|j}}{2} ( \tilde{\mathbf{x}}_j^{(r)} - \tilde{\mathbf{x}}_i^{(r)} ),
\end{align}
where $\mathbf{c}_i^{(0)} \coloneqq \frac{1}{2} \sum_{j\in\mathcal{N}_i} \alpha_{i|j} (\mathbf{x}_j^{(0)} - \mathbf{x}_i^{(0)})$, and $W_{ij}$ and $\eta^\prime_i$ are defined as follows:
\begin{gather}
    \label{eq:definition_of_eta_prime}
    \eta^\prime_i \coloneqq \frac{\eta}{1 + \eta \sum_{j\in\mathcal{N}_i} \alpha_{i|j}}, 
    \quad 
    W_{ij} \coloneqq 
    \begin{dcases}
    \frac{2 + \eta \sum_{k\in\mathcal{N}_i} \alpha_{i|k}}{2 ( 1 + \eta \sum_{k\in\mathcal{N}_i} \alpha_{i|k})} & \text{if} \;\;  i=j \\
    \frac{\eta \alpha_{i|j}}{2 ( 1 + \eta \sum_{k\in\mathcal{N}_i} \alpha_{i|k})} & \text{if} \;\;  (i,j) \in \mathcal{E} \\
    0 & \text{otherwise}
    \end{dcases}.
\end{gather}
\end{theorem}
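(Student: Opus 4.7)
The plan is to rewrite the ECL recursion as a gossip update plus a gradient-correction term by introducing an auxiliary variable that aggregates the dual variables at each node, and then to read off \eqref{eq:reformulated_2_stochastic_ecl_2} and \eqref{eq:reformulated_2_stochastic_ecl_3} from this reparametrization. First I would solve the argmin in \eqref{eq:stochastic_ecl} in closed form: because each $\mathbf{A}_{i|j}$ equals $\pm\mathbf{I}$, the objective is a quadratic whose Hessian is proportional to $\mathbf{I}$, and setting the gradient to zero reproduces the update in line~4 of Algorithm~\ref{alg:stochastic_ecl}. Next I would introduce
\[
\mathbf{c}_i^{(r)} \coloneqq \sum_{j\in\mathcal{N}_i}\alpha_{i|j}\Bigl(\mathbf{A}_{i|j}\mathbf{z}_{i|j}^{(r)}-\tfrac{1}{2}\bigl(\mathbf{x}_i^{(r)}+\mathbf{x}_j^{(r)}\bigr)\Bigr),
\]
use $\mathbf{z}_{i|j}^{(0)}=\mathbf{A}_{i|j}\mathbf{x}_j^{(0)}$ and $\mathbf{A}_{i|j}^2=\mathbf{I}$ to verify that $\mathbf{c}_i^{(0)}$ matches the prescribed $\tfrac{1}{2}\sum_j\alpha_{i|j}(\mathbf{x}_j^{(0)}-\mathbf{x}_i^{(0)})$, and substitute this definition back into the closed-form primal update. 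Grouping the coefficients of $\mathbf{x}_i^{(r)}$ and $\mathbf{x}_j^{(r)}$ produces exactly the weights $W_{ii}$, $W_{ij}$ and the step size $\eta'_i$ stated in the theorem, establishing \eqref{eq:reformulated_2_stochastic_ecl_1} and \eqref{eq:reformulated_2_stochastic_ecl_2}.

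For \eqref{eq:reformulated_2_stochastic_ecl_3}, I would apply $\mathbf{A}_{i|j}$ to the composed dual step $\mathbf{z}_{i|j}^{(r+1)}=\tfrac{1}{2}\mathbf{z}_{i|j}^{(r)}+\tfrac{1}{2}\bigl(\mathbf{z}_{j|i}^{(r)}-2\mathbf{A}_{j|i}\mathbf{x}_j^{(r+1)}\bigr)$ and exploit $\mathbf{A}_{i|j}\mathbf{A}_{j|i}=-\mathbf{I}$ together with $\mathbf{A}_{i|j}\mathbf{z}_{j|i}^{(r)}=-\mathbf{A}_{j|i}\mathbf{z}_{j|i}^{(r)}$ to obtain a clean anti-symmetric identity for $\alpha_{i|j}\mathbf{A}_{i|j}\mathbf{z}_{i|j}^{(r+1)}$. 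The key observation is that this identity, together with its $(j,i)$-counterpart and the symmetric initialization of $\mathbf{z}_{i|j}^{(0)}$, forces the per-edge conservation law
\[
\alpha_{i|j}\mathbf{A}_{i|j}\mathbf{z}_{i|j}^{(r)}+\alpha_{j|i}\mathbf{A}_{j|i}\mathbf{z}_{j|i}^{(r)}=\alpha_{i|j}\bigl(\mathbf{x}_i^{(r)}+\mathbf{x}_j^{(r)}\bigr)
\]
to hold at every $r\ge 0$. Using this to eliminate the cross-node dual variables collapses the sum in the definition of $\mathbf{c}_i^{(r+1)}$ into the compact recursion $\mathbf{c}_i^{(r+1)}=\mathbf{c}_i^{(r)}+\tfrac{1}{2}\sum_j\alpha_{i|j}(\mathbf{x}_j^{(r+1)}-\mathbf{x}_i^{(r+1)})$. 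Finally, substituting \eqref{eq:reformulated_2_stochastic_ecl_2} for both $\mathbf{x}_j^{(r+1)}$ and $\mathbf{x}_i^{(r+1)}$ splits this drift into a $(\tilde{\mathbf{x}}_j^{(r)}-\tilde{\mathbf{x}}_i^{(r)})$-piece, which remains as the drift term of \eqref{eq:reformulated_2_stochastic_ecl_3}, and a $\eta'_\cdot(\mathbf{g}-\mathbf{c})$-piece, whose coefficients match $W$ and regroup into $\sum_{j\in\mathcal{N}_i^+}W_{ij}(\mathbf{c}_j^{(r)}-\mathbf{g}_j^{(r)})+\mathbf{g}_i^{(r)}$.

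The hard part is the conservation law. Without it the $\mathbf{c}_i^{(r+1)}$ recursion still couples to the individual neighbor dual variables $\alpha_{j|i}\mathbf{A}_{j|i}\mathbf{z}_{j|i}^{(r)}$ and the update cannot be expressed purely in terms of the aggregates $\mathbf{c}_j^{(r)}$; it is precisely the symmetric relaxation $\theta=1/2$ combined with the prescribed initialization of $\mathbf{z}_{i|j}^{(0)}$ that enforces the conservation and produces the gossip-like structure of the reformulation. The remaining algebra — matching the recombined coefficients with the $W_{ij}$ defined in the theorem — is routine once that identity is in hand.
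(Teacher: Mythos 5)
Your proposal is correct and follows essentially the same route as the paper: your $\mathbf{c}_i^{(r)}$ coincides algebraically with the paper's definition $\mathbf{c}_i^{(r)} = \mathbf{b}_i^{(r)} + \tfrac{1}{2}\sum_{j\in\mathcal{N}_i}\alpha_{i|j}(\mathbf{x}_j^{(r)}-\mathbf{x}_i^{(r)})$, and your per-edge conservation law is exactly the paper's identity $\mathbf{u}_{i|j}^{(r)}=\mathbf{u}_{j|i}^{(r)}$ (with $\mathbf{u}_{i|j}\coloneqq\mathbf{z}_{i|j}+\mathbf{A}_{j|i}\mathbf{x}_j$) after multiplying through by $\mathbf{A}_{i|j}$ and $\alpha_{i|j}$. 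The remaining steps (closed-form argmin, compact recursion for $\mathbf{c}_i$, and substitution of the primal update) mirror the paper's Lemma on reformulation and its proof of the theorem.
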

At first glance, 
the update formulas of the ECL Eqs. (\ref{eq:ecl_2}-\ref{eq:stochastic_ecl}) do not explicitly compute the weighted average, in contrast to that of the Gossip algorithm.
However, Theorem \ref{theorem:reformulation2} shows that as well as the update formulas of the Gossip algorithm Eq. \eqref{eq:gossip}, 
the update formula Eq. \eqref{eq:reformulated_2_stochastic_ecl_1} computes the weighted sum whose weights are determined by $\eta$ and $\{ \alpha_{i|j} \}_{ij}$.
Subsequently, the update formula Eq. \eqref{eq:reformulated_2_stochastic_ecl_2} can be regarded as the one where $\mathbf{c}_i$ modifies the local stochastic gradient $\nabla F_i(\mathbf{x}_i ; \xi_i)$ in the update formulas of the Gossip algorithm Eq. \eqref{eq:gossip}.\footnote{The Gossip algorithm and ECL also differ in the order of calculation of weighted average and the stochastic gradient descent. We discuss the effect of this difference in Sec. \ref{sec:discussion}.}
To investigate how the term $\mathbf{c}_i$ modifies the local stochastic gradient $\nabla F_i (\mathbf{x}_i ; \xi_i)$,
we discuss the relationship between the ECL and gradient tracking methods \cite{lorenzo2016next,nedi2017achieving,koloskova2021improved}
in Sec. \ref{sec:gt_and_ecl}.

\subsection{Assumption of Hyperparameters}
\label{sec:ecl_as_gossip:assumption_of_hyperparameter}
Let $\mathbf{W}$ be an $n\times n$ matrix whose $(i,j)$-element is $W_{ij}$.
In general, $\mathbf{W}$ defined by Eq. \eqref{eq:definition_of_eta_prime} is not a mixing matrix
because $\mathbf{W}$ is not symmetric.
In this section,
to further discuss the relationship between the ECL and Gossip algorithm,
we discuss the conditions of hyperparameters for $\mathbf{W}$ to be a mixing matrix.
The following assumption and theorem show that if we set the hyperparameter $\{ \alpha_{i|j} \}_{ij}$ appropriately,
$\mathbf{W}$ is a mixing matrix.

\begin{assumption}
\label{assumption:definition_of_alpha}
The hyperparameter $\{ \alpha_{i|j} \}_{ij}$ is set such that
$\alpha_{i|j}=\alpha_{j|i} \geq 0$ for all $(i,j) \in \mathcal{E}$, 
and there exists $\alpha>0$ that satisfies $\sum_{k\in\mathcal{N}_i} \alpha_{i|k}=\alpha$ for all $i \in [n]$.
\end{assumption}
\begin{theorem}
\label{theorem:mixing_matrix}
Suppose that Assumption \ref{assumption:definition_of_alpha} holds,
then $\mathbf{W}$ defined by Eq. \eqref{eq:definition_of_eta_prime} is a mixing matrix.
\end{theorem}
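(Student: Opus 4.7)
The plan is to directly verify each of the three defining properties of a mixing matrix in turn: entries lying in $[0,1]$, symmetry $\mathbf{W}=\mathbf{W}^\top$, and double stochasticity $\mathbf{W}\mathbf{1}=\mathbf{1}$, $\mathbf{1}^\top\mathbf{W}=\mathbf{1}^\top$. The key preliminary observation is that Assumption \ref{assumption:definition_of_alpha} makes the denominator $1+\eta\sum_{k\in\mathcal{N}_i}\alpha_{i|k}$ appearing in Eq.~\eqref{eq:definition_of_eta_prime} equal to $1+\eta\alpha$ for every $i$, so this common factor can be pulled out uniformly across rows. This uniformity is exactly what breaks the asymmetry of the general formula and lets everything else fall out.

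First, I would check that $W_{ij}\in[0,1]$. Non-negativity is immediate from $\alpha_{i|j}\geq 0$ and $\eta>0$. For the upper bound, the diagonal entry $\tfrac{2+\eta\alpha}{2(1+\eta\alpha)}$ is clearly at most $1$ since $2+\eta\alpha\leq 2+2\eta\alpha$; off-diagonal entries satisfy $\eta\alpha_{i|j}\leq \eta\alpha\leq 2(1+\eta\alpha)$, so they are also bounded by $1$. Next, symmetry is where the assumption pays off: under the uniform-sum condition, both $W_{ij}$ and $W_{ji}$ reduce to $\tfrac{\eta\alpha_{i|j}}{2(1+\eta\alpha)}$ (resp. $\tfrac{\eta\alpha_{j|i}}{2(1+\eta\alpha)}$) with the same denominator, so $\alpha_{i|j}=\alpha_{j|i}$ immediately yields $W_{ij}=W_{ji}$.

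Finally, for double stochasticity, I would compute the $i$-th row sum
\[
\sum_{j\in[n]} W_{ij} \;=\; \frac{2+\eta\alpha}{2(1+\eta\alpha)} + \sum_{j\in\mathcal{N}_i}\frac{\eta\alpha_{i|j}}{2(1+\eta\alpha)} \;=\; \frac{2+\eta\alpha+\eta\alpha}{2(1+\eta\alpha)} \;=\; 1,
\]
using $\sum_{j\in\mathcal{N}_i}\alpha_{i|j}=\alpha$. Hence $\mathbf{W}\mathbf{1}=\mathbf{1}$, and combined with symmetry this also gives $\mathbf{1}^\top\mathbf{W}=\mathbf{1}^\top$.

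There is no substantial obstacle here; the only thing worth highlighting is why the extra degree of freedom introduced in this work (allowing $\alpha_{i|j}$ to depend on the edge rather than only on the node) does not break the mixing matrix property. The point is that the uniform-sum condition $\sum_{k\in\mathcal{N}_i}\alpha_{i|k}=\alpha$ in Assumption \ref{assumption:definition_of_alpha} is exactly what keeps the denominator node-independent, and the pairwise condition $\alpha_{i|j}=\alpha_{j|i}$ is what aligns the numerators — together they recover symmetry even though individual $\alpha_{i|j}$'s may now vary along edges.
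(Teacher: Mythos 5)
Your proof is correct and follows essentially the same route as the paper: symmetry from the node-independent denominator $1+\eta\alpha$ combined with $\alpha_{i|j}=\alpha_{j|i}$, row sums equal to $1$ by direct computation, and column sums by symmetry. The only addition is your explicit verification that the entries lie in $[0,1]$, which the paper leaves implicit but which is a valid (and slightly more complete) check against the definition.
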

Moreover, as a by-product, when Assumption \ref{assumption:definition_of_alpha} holds,
$\eta^\prime_i$ defined by Eq. \eqref{eq:definition_of_eta_prime} are same for all $i \in [n]$.
\begin{remark}
Suppose that the hyperparameter $\{ \alpha_{i|j} \}_{ij}$ is set such that Assumption \ref{assumption:definition_of_alpha} holds,
there exists $\eta^\prime>0$ that satisfies for all $i\in [n]$,
\begin{align}
    \eta^\prime 
    = \eta^\prime_i
    = \frac{\eta}{1 + \eta \alpha}.
\end{align}
\end{remark}
Then, when $G$ is a regular graph, we can set the hyperparameter $\{ \alpha_{i|j} \}_{ij}$ that satisfies Assumption \ref{assumption:definition_of_alpha} as follows.
\begin{example}
\label{example:alpha}
Suppose that $G$ is a $k$-regular graph with $k>0$.
If we set $\alpha_{i|j} = \frac{\alpha}{k}$ for all $(i, j) \in \mathcal{E}$,
then the hyperparameter $\{ \alpha_{i|j} \}_{ij}$ satisfies Assumption \ref{assumption:definition_of_alpha},
and $\mathbf{W}$ is defined as follows:
\begin{align*}
    W_{ij} \coloneqq 
    \begin{dcases}
    \frac{2 + \eta \alpha}{2 ( 1 + \eta \alpha) } & \text{if} \;\; i=j \\
    \frac{\eta \alpha}{2 k ( 1 + \eta \alpha) } & \text{if} \;\;  (i,j) \in \mathcal{E} \\
    0 & \text{otherwise}
    \end{dcases}.
\end{align*}
\end{example}

\subsection{Property of Average Sequence}
\label{sec:ecl_as_gossip:initial_value}

In Sec. \ref{sec:ecl_as_gossip:assumption_of_hyperparameter}, we show that if the hyperparameter $\{ \alpha_{i|j} \}_{ij}$ is set appropriately,
$\mathbf{W}$ defined by Eq. \eqref{eq:definition_of_eta_prime} is a mixing matrix as well as in the Gossip algorithm.
In this section, 
we discuss the relationship between the Gossip algorithm and ECL from the property of the sequence of the average $ \frac{1}{n} \sum_{i=1}^n \mathbf{x}_i^{(r)}$.

In the Gossip algorithm, when $\mathbf{W}$ is a mixing matrix, 
the average $\bar{\mathbf{x}}^{(r)}\coloneqq \frac{1}{n} \sum_{i=1}^{n} \mathbf{x}_i^{(r)}$ generated by Eq. \eqref{eq:gossip} satisfies the following \cite{koloskova2020unified}:
\begin{align}
\label{eq:average}
    \bar{\mathbf{x}}^{(r+1)} &= \bar{\mathbf{x}}^{(r)} - \frac{\eta}{n} \sum_{i=1}^{n} \nabla F_i(\mathbf{x}_i^{(r)} ; \xi_i^{(r)}).
\end{align}
That is, the update formula of the Gossip algorithm is almost equivalent to that of the SGD,
which plays an important role in the convergence analysis of the Gossip algorithm \citep{koloskova2020unified}.
Similarly, 
the property of Eq. \eqref{eq:average} is satisfied in the ECL,
as the following lemma indicates.

\begin{lemma}[Average Sequence]
\label{lemma:c_is_zero}
Suppose that the hyperparameter $\{ \alpha_{i|j} \}_{ij}$ is set such that Assumption \ref{assumption:definition_of_alpha} holds.
Then, under the same assumptions as those in Theorem \ref{theorem:reformulation2},
it holds that $\sum_{i=1}^n \mathbf{c}_i^{(r)}=\mathbf{0}$ for any round $r$,
and the average $\bar{\mathbf{x}}^{(r)} \coloneqq \frac{1}{n} \sum_{i=1}^{n} \mathbf{x}_i^{(r)}$ generated by Eqs. (\ref{eq:reformulated_2_stochastic_ecl_1}-\ref{eq:reformulated_2_stochastic_ecl_3}) satisfies the following:
\begin{align}
\label{eq:average_in_ecl}
    \bar{\mathbf{x}}^{(r+1)} &= \bar{\mathbf{x}}^{(r)} - \frac{\eta^\prime}{n} \sum_{i=1}^{n} \nabla F_i(\mathbf{x}_i^{(r)} ; \xi_i^{(r)}).
\end{align}
\end{lemma}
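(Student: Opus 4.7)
The plan is to establish both claims by exploiting the two structural properties granted by Assumption 1 and Theorem 2: the symmetry $\alpha_{i|j}=\alpha_{j|i}$ and the double stochasticity of $\mathbf{W}$. The proof naturally splits into first proving $\sum_{i=1}^n \mathbf{c}_i^{(r)} = \mathbf{0}$ by induction on $r$, and then deriving Eq.~\eqref{eq:average_in_ecl} as an easy consequence by averaging Eq.~\eqref{eq:reformulated_2_stochastic_ecl_2}.

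For the base case $r=0$, I would plug in the definition $\mathbf{c}_i^{(0)} = \tfrac{1}{2}\sum_{j\in\mathcal{N}_i}\alpha_{i|j}(\mathbf{x}_j^{(0)} - \mathbf{x}_i^{(0)})$ and sum over $i$. The resulting double sum $\sum_i \sum_{j\in\mathcal{N}_i}\alpha_{i|j}(\mathbf{x}_j^{(0)}-\mathbf{x}_i^{(0)})$ is an edge-indexed sum whose two halves cancel upon swapping the dummy indices $i\leftrightarrow j$, precisely because $\alpha_{i|j}=\alpha_{j|i}$ and $(i,j)\in\mathcal{E}\Leftrightarrow(j,i)\in\mathcal{E}$. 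For the inductive step, I would sum Eq.~\eqref{eq:reformulated_2_stochastic_ecl_3} over $i$, splitting it into three pieces. The last piece, $\sum_i\sum_{j\in\mathcal{N}_i}\tfrac{\alpha_{i|j}}{2}(\tilde{\mathbf{x}}_j^{(r)}-\tilde{\mathbf{x}}_i^{(r)})$, vanishes by the same symmetry argument as the base case. The first piece, $\sum_i\sum_{j\in\mathcal{N}_i^+}W_{ij}(\mathbf{c}_j^{(r)}-\nabla F_j(\mathbf{x}_j^{(r)};\xi_j^{(r)}))$, simplifies via $\sum_i W_{ij}=1$ (column-stochasticity from Theorem 2) to $\sum_j \mathbf{c}_j^{(r)} - \sum_j \nabla F_j(\mathbf{x}_j^{(r)};\xi_j^{(r)})$. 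Adding the middle term $\sum_i\nabla F_i(\mathbf{x}_i^{(r)};\xi_i^{(r)})$ cancels the stochastic-gradient contribution, leaving $\sum_i\mathbf{c}_i^{(r+1)}=\sum_j\mathbf{c}_j^{(r)}$, which closes the induction.

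For the second claim, I would average Eq.~\eqref{eq:reformulated_2_stochastic_ecl_2} over $i$. The term $\frac{1}{n}\sum_i \tilde{\mathbf{x}}_i^{(r)} = \frac{1}{n}\sum_i\sum_{j\in\mathcal{N}_i^+} W_{ij}\mathbf{x}_j^{(r)}$ collapses to $\bar{\mathbf{x}}^{(r)}$ by double stochasticity, and the $\mathbf{c}_i^{(r)}$ contribution vanishes by the first claim. Finally, because Assumption 1 grants a common $\eta'_i = \eta' = \eta/(1+\eta\alpha)$ (the Remark following Theorem 2), I can pull $\eta'$ out of the sum to obtain Eq.~\eqref{eq:average_in_ecl}.

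The only real subtlety is the edge-symmetry cancellation, which I would state once as a small auxiliary identity, $\sum_i\sum_{j\in\mathcal{N}_i}\alpha_{i|j}(\mathbf{v}_j-\mathbf{v}_i)=\mathbf{0}$ for any family of vectors $\{\mathbf{v}_i\}$ when $\alpha_{i|j}=\alpha_{j|i}$, and then reuse it in both the base case and the inductive step. Everything else is purely algebraic manipulation that follows immediately from the doubly-stochastic structure already proven in Theorem~\ref{theorem:mixing_matrix}, so I do not anticipate a serious obstacle.
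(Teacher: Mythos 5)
Your proposal is correct and follows essentially the same route as the paper: the edge-symmetry cancellation $\sum_i\sum_{j\in\mathcal{N}_i}\alpha_{i|j}(\mathbf{v}_j-\mathbf{v}_i)=\mathbf{0}$ gives the invariance of $\sum_i\mathbf{c}_i^{(r)}$, and double stochasticity of $\mathbf{W}$ plus the common step size $\eta'$ yields Eq.~\eqref{eq:average_in_ecl}. The only cosmetic difference is that the paper sums the equivalent recursion $\mathbf{c}_i^{(r+1)}=\mathbf{c}_i^{(r)}+\tfrac{1}{2}\sum_{j\in\mathcal{N}_i}\alpha_{i|j}(\mathbf{x}_j^{(r+1)}-\mathbf{x}_i^{(r+1)})$ from its auxiliary reformulation lemma (needing only $\alpha_{i|j}=\alpha_{j|i}$), whereas you sum Eq.~\eqref{eq:reformulated_2_stochastic_ecl_3} directly and additionally invoke column-stochasticity of $\mathbf{W}$ — both are valid under the lemma's hypotheses.
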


%

\section{Generalized Edge-Consensus Learning}
\label{sec:gecl}
In Sec. \ref{sec:reformulation},
we show that
a node computes the average with its neighbors using the mixing matrix $\mathbf{W}$ in the ECL as well as in the Gossip algorithm
and then updates the model parameter by the stochastic gradient descent modified by the term $\mathbf{c}_i$. 
However, in contrast with the Gossip algorithm,
$\mathbf{W}$, $\eta^\prime$ and $\{ \alpha_{i|j} \}_{ij}$ depend on each other in the ECL,
which makes the convergence analysis difficult.
Then, we refer to the update formulas Eqs. (\ref{eq:reformulated_2_stochastic_ecl_1}-\ref{eq:reformulated_2_stochastic_ecl_3})
as the \textbf{Generalized ECL (G-ECL)} when $\mathbf{W}$, $\eta^\prime$, and $\{ \alpha_{i|j} \}$ are set independently as hyperparameters
and provide the convergence rate of the G-ECL in Sec. \ref{sec:convergence_results}.
Then, we experimentally demonstrate that the ECL converges at the same rate as the G-ECL in Sec. \ref{sec:experimental_results}.
The pseudo-code of the G-ECL is illustrated in Sec. \ref{sec:algorithm_gecl}.
Note that because the G-ECL is equivalent to the ECL when $\mathbf{W}$ and $\eta^\prime$ are set as in Eq. \eqref{eq:definition_of_eta_prime},
the ECL is a special case of the G-ECL.

\section{Setup}
\label{sec:setup}
In this section, we introduce the assumptions and notations used in the convergence analysis in the next section.
We define $b^\prime \coloneqq \left\| \mathbf{W} - \mathbf{I} \right\|^2_F$
and $\bar{\mathbf{x}}^{(r)} \coloneqq \frac{1}{n} \sum_{i=1}^n \mathbf{x}_i^{(r)}$
and denote $f^\star$ as the optimal value of Eq. \eqref{eq:decentralized_learning}.
When $f_i$ is convex for all $i \in [n]$, we denote $\mathbf{x}^\star \in \mathbb{R}^d$ as the optimal solution of Eq. \eqref{eq:decentralized_learning}.
Next, we introduce the assumptions used for the convergence analysis of the G-ECL.
\begin{assumption}[Mixing Matrix]
\label{assumption:mixing_matrix}
There exists $p \in (0, 1]$ such that for any $\mathbf{x}_1, \cdots, \mathbf{x}_n \in \mathbb{R}^d$,
\begin{align}
\label{eq:assumption:mixing_matrix}
    \left\| \mathbf{X} \mathbf{W} - \bar{\mathbf{X}} \right\|^2_F \leq (1-p) \left\| \mathbf{X} - \bar{\mathbf{X}} \right\|^2_F,
\end{align}
where $\mathbf{X}=(\mathbf{x}_1, \cdots, \mathbf{x}_n)\in\mathbb{R}^{d\times n}$
and $\bar{\mathbf{X}} \coloneqq \frac{1}{n} \mathbf{X} \mathbf{1}\mathbf{1}^\top$.
\end{assumption}
\begin{assumption}[$L$-smoothness]
\label{assumption:smoothness}
For any $i \in [n]$, there exists $L>0$ such that for all $\mathbf{x}, \mathbf{y} \in \mathbb{R}^d$,
\begin{align}
\label{eq:assumption:smoothness}
    \| \nabla f_i(\mathbf{x}) - \nabla f_i (\mathbf{y}) \| \leq L \| \mathbf{x} - \mathbf{y} \|.
\end{align}
\end{assumption}
\begin{assumption}[Bounded Gradient Noise]
\label{assumption:stochastic_gradient}
For any $i\in[n]$, there exists $\sigma \geq 0$ such that for all $\mathbf{x}_i \in \mathbb{R}^d$, 
\begin{align}
\label{eq:assumption:stochastic_gradient}
    \mathbb{E}_{\xi_i \sim \mathcal{D}_i} \| \nabla F_i(\mathbf{x}_i ; \xi_i) - \nabla f_i(\mathbf{x}_i) \|^2 \leq \sigma^2.
\end{align}
\end{assumption}
\begin{assumption}[$\mu$-convexity]
\label{assumption:mu_convexity}
For any $i \in [n]$, there exists $\mu\geq0$ such that for all $\mathbf{x}, \mathbf{y} \in \mathbb{R}^d$,
\begin{align}
\label{eq:assumption:mu_convexity}
    f_i (\mathbf{x}) \geq f_i(\mathbf{y}) + \langle \nabla f_i(\mathbf{y}), \mathbf{x} - \mathbf{y} \rangle + \frac{\mu}{2} \| \mathbf{x} - \mathbf{y} \|^2.
\end{align}
\end{assumption}
Assumptions \ref{assumption:mixing_matrix}, \ref{assumption:smoothness}, \ref{assumption:stochastic_gradient}, and \ref{assumption:mu_convexity} are commonly used in convergence analyses of decentralized learning algorithms \citep{koloskova2021improved,lian2018asynchronous,vogels2020practical}.
In addition, the following assumption, which represents the heterogeneity of data distributions, is commonly used \citep{lian2018asynchronous,vogels2020practical}.
However, this assumption is not necessary for the convergence analysis of the G-ECL shown in Theorem \ref{theoram:convergence_rate}.
\begin{assumption}[Bounded Heterogeneity]
\label{assumption:hetero}
There exists $\zeta \geq 0$ such that for all $\mathbf{x} \in \mathbb{R}^d$,
\begin{align*}
    \frac{1}{n} \sum_{i=1}^{n} \| \nabla f_i(\mathbf{x}) - \nabla f(\mathbf{x}) \|^2 \leq \zeta^2.
\end{align*}
\end{assumption}

\section{Convergence Results}
\label{sec:convergence_results}
In this section, we present the convergence results of the G-ECL.
Our convergence analysis is based on the analysis of the Gossip algorithm \citep{koloskova2020unified}, 
and all the proofs are presented in Sec. \ref{sec:proof_of_main}.

\subsection{Main Theorem}

\begin{theorem}
\label{theoram:convergence_rate}
Suppose that Assumptions \ref{assumption:mixing_matrix}, \ref{assumption:smoothness}, \ref{assumption:stochastic_gradient} hold,
$\{ \alpha_{i|j} \}_{ij}$ is set such that $\alpha_{i|j} = \alpha_{j|i} \geq 0$ for all $(i,j) \in \mathcal{E}$,
$\mathbf{W}$ is set to be a mixing matrix,
and $\mathbf{x}_i^{(0)}$ is initialized with the same parameters for all $i\in[n]$.

\textbf{Non-convex:}
In addition, suppose that $\mathbf{c}_i$ is initialized to $\nabla f_i (\mathbf{x}_i^{(0)}) - \nabla f (\mathbf{x}_i^{(0)})$.
Then, there exists a step size $\eta^\prime < \frac{1}{s}$ such that the average $\bar{\mathbf{x}}^{(r)}$ generated by the G-ECL satisfies
\begin{align*}
    \frac{1}{R} \sum_{r=0}^{R-1} \mathbb{E} \left\| \nabla f(\bar{\mathbf{x}}^{(r)}) \right\|^2 
    \leq 
    \mathcal{O} \left( \sqrt{\frac{r_0 \sigma^2 L}{n R}} 
    + \left( \frac{ r_0^2 L^2 \tilde{\sigma}^2}{p R^2} \right)^{\frac{1}{3}}
    + \frac{s r_0}{R}
    \right),
\end{align*}
where $r_0 \coloneqq f(\bar{\mathbf{x}}^{(0)}) - f^\star$ and $\tilde{\sigma}^2 \coloneqq (1+\frac{b^\prime}{p^3}) \sigma^2$.

\textbf{General Convex:}
In addition, suppose that $f_i$ is convex for all $i \in [n]$, Assumption \ref{assumption:mu_convexity} holds with $\mu=0$,
and $\mathbf{c}_i$ is initialized to $\nabla f_i (\mathbf{x}^\star)$.
Then, there exists a step size $\eta^\prime < \frac{1}{s}$ such that the average $\bar{\mathbf{x}}^{(r)}$ generated by the G-ECL satisfies
\begin{align*}
    \frac{1}{R} \sum_{r=0}^{R-1} (\mathbb{E}[f(\bar{\mathbf{x}}^{(r)})] - f^\star) 
    \leq \mathcal{O} \left(
    \sqrt{\frac{r_0 \sigma^2}{n R}} 
    + \left( \frac{r_0^2  L \tilde{\sigma}^2 }{ p R^2 } \right)^{\frac{1}{3}}
    + \frac{s r_0}{R}
    \right),
\end{align*}
where $r_0 \coloneqq \| \bar{\mathbf{x}}^{(0)} - \mathbf{x}^\star \|^2$
and $\tilde{\sigma}^2 \coloneqq (1 + \frac{b^\prime}{p^2}) \sigma^2$.

\textbf{Strongly Convex:}
In addition, suppose that $f_i$ is convex for all $i \in [n]$, Assumption \ref{assumption:mu_convexity} holds with $\mu>0$,
and $\mathbf{c}_i$ is initialized to $\nabla f_i (\mathbf{x}^\star)$.
Let $w^{(r)} \coloneqq (1 - \frac{\mu \eta^\prime}{2})^{-(r+1)}$ and $W_R \coloneqq \sum_{r=0}^{R} w^{(r)}$.
Then, there exists a step size $\eta^\prime < \frac{1}{s}$ such that the average $\bar{\mathbf{x}}^{(r)}$ generated by the G-ECL satisfies
\begin{align*}
    &\sum_{r=0}^{R} \frac{w^{(r)}}{W_R} (\mathbb{E}[f(\bar{\mathbf{x}}^{(r)})] - f^\star) 
    + \mu \mathbb{E} \| \bar{\mathbf{x}}^{(R+1)} \!\!\! - \mathbf{x}^\star \|^2 
    \leq
    \tilde{\mathcal{O}} \left( \! r_0 s \exp \left[ \frac{-\mu (R+1)}{s} \right]
    \!+\! \frac{\sigma^2}{\mu n R}
    \!+\! \frac{L \tilde{\sigma}^2}{p \mu^2 R^2}
    \! \right),
\end{align*}
where $r_{0} \coloneqq \| \bar{\mathbf{x}}^{(0)} - \mathbf{x}^\star \|^2$, $\tilde{\sigma}^2 \coloneqq (1 + \frac{b^\prime}{p^2}) \sigma^2$, and $\tilde{\mathcal{O}}(\cdot)$ hides polylogarithmic factors.
\end{theorem}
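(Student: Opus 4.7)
My plan is to adapt the unified decentralized-SGD framework of \citet{koloskova2020unified} to the G-ECL iteration written in matrix form, exploiting the fact that by Lemma~\ref{lemma:c_is_zero} the mean iterate $\bar{\mathbf{x}}^{(r)}$ of G-ECL already evolves as plain SGD with step size $\eta^\prime$ on $\{\nabla F_i(\mathbf{x}_i^{(r)};\xi_i^{(r)})\}_i$. Consequently each of the three claimed rates will reduce to the usual SGD descent/contraction inequality (in the non-convex case based on $L$-smoothness applied to $f(\bar{\mathbf{x}}^{(r+1)})-f(\bar{\mathbf{x}}^{(r)})$; in the convex cases based on the Euclidean potential $\|\bar{\mathbf{x}}^{(r)}-\mathbf{x}^\star\|^2$ together with $\mu$-convexity), provided one can bound the consensus error $\Xi^{(r)}:=\tfrac{1}{n}\sum_i\mathbb{E}\|\mathbf{x}_i^{(r)}-\bar{\mathbf{x}}^{(r)}\|^2$ by a quantity that scales with $\tilde{\sigma}^2$ rather than $\sigma^2+\tilde{\zeta}^2$.

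First I would stack the iterates into $\mathbf{X}^{(r+1)}=\mathbf{X}^{(r)}\mathbf{W}-\eta^\prime(\mathbf{G}^{(r)}-\mathbf{C}^{(r)})$. Using Assumption~\ref{assumption:mixing_matrix} with Young's inequality, a standard computation gives a one-step bound of the form $\Xi^{(r+1)}\le (1-\tfrac{p}{2})\Xi^{(r)}+\tfrac{c\eta^{\prime 2}}{p}\,\tfrac{1}{n}\mathbb{E}\|\mathbf{G}^{(r)}-\mathbf{C}^{(r)}-\overline{\mathbf{G}^{(r)}-\mathbf{C}^{(r)}}\|_F^2$. Bounding the last quantity naively would reintroduce the heterogeneity $\zeta^2$; the decisive step is therefore to introduce a second potential $\Psi^{(r)}:=\tfrac{1}{n}\mathbb{E}\|\mathbf{C}^{(r)}+\nabla\mathbf{F}(\bar{\mathbf{X}}^{(r)})-\overline{\nabla\mathbf{F}(\bar{\mathbf{X}}^{(r)})}\|_F^2$ measuring how well $\mathbf{c}_i^{(r)}$ cancels the gradient-offset $\nabla f_i(\bar{\mathbf{x}}^{(r)})-\nabla f(\bar{\mathbf{x}}^{(r)})$. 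Reading off (\ref{eq:reformulated_2_stochastic_ecl_3}) in matrix form and using Assumptions~\ref{assumption:definition_of_alpha} and \ref{assumption:mixing_matrix}, I expect a recursion $\Psi^{(r+1)}\le (1-\Theta(p))\Psi^{(r)}+O\!\left(\tfrac{b^\prime}{p}\sigma^2+L^2\Xi^{(r)}\right)$; here the $\tfrac{b^\prime}{p^2}\sigma^2$ factor appearing in $\tilde{\sigma}^2$ originates from iterating the gossip contraction against the noise injected by $\mathbf{G}^{(r)}-\mathbf{G}^{(r)}\mathbf{W}$, whose Frobenius norm squared is controlled by $b^\prime \sigma^2$, while $L$-smoothness converts the drift between $\nabla F_i(\mathbf{x}_i^{(r)};\xi_i^{(r)})$ and $\nabla f_i(\bar{\mathbf{x}}^{(r)})$ into $L^2\Xi^{(r)}$.

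Combining the two recursions into a single Lyapunov function $\Xi^{(r)}+\lambda\eta^{\prime 2}\Psi^{(r)}$, choosing $\lambda$ so that the $L^2\Xi^{(r)}$ cross term is absorbed, and using the initialization $\mathbf{c}_i^{(0)}=\mathbf{0}$ and $\mathbf{x}_i^{(0)}=\bar{\mathbf{x}}^{(0)}$ to zero the potential at $r=0$, one obtains a summed consensus bound $\sum_{r<R}\Xi^{(r)}=O\!\left(\tfrac{\eta^{\prime 2}R\tilde{\sigma}^2}{p^2}+\tfrac{\eta^{\prime 2}}{p^2}\sum_{r<R}\mathbb{E}\|\nabla f(\bar{\mathbf{x}}^{(r)})\|^2\right)$. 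Plugging this into the SGD descent/contraction inequality for $\bar{\mathbf{x}}^{(r)}$ and applying the step-size tuning lemma of \citet{koloskova2020unified} (in the strongly convex case with the exponentially increasing weights $w^{(r)}=(1-\mu\eta^\prime/2)^{-(r+1)}$) simultaneously delivers all three advertised rates. The principal obstacle is the $\Psi^{(r)}$ recursion: the update of $\mathbf{c}_i^{(r)}$ carries not only the $\mathbf{W}$-contraction but also the extra term $\tfrac12\sum_j\alpha_{i|j}(\tilde{\mathbf{x}}_j^{(r)}-\tilde{\mathbf{x}}_i^{(r)})$, and controlling it without reintroducing $\zeta$ requires writing it as an $\alpha$-weighted Laplacian acting on $\mathbf{X}^{(r)}\mathbf{W}-\bar{\mathbf{X}}^{(r)}$ and absorbing its Frobenius norm into the spectral gap $p$ via Assumption~\ref{assumption:definition_of_alpha}; everything else is a bookkeeping specialization of the unified framework.
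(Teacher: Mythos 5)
Your proposal follows essentially the same route as the paper's proof: the same matrix-form iteration, the same use of the average-sequence lemma to reduce the mean iterate to plain SGD, a consensus recursion coupled with a gradient-corrector potential (your $\Psi^{(r)}$ is, up to a sign convention, the paper's $\mathcal{E}^{(r)}$), a combined Lyapunov function $\Xi^{(r)}+\lambda\eta^{\prime 2}\mathcal{E}^{(r)}$, and the step-size tuning lemmas of \citet{koloskova2020unified}; you also correctly identify that the $\frac12\sum_j\alpha_{i|j}(\tilde{\mathbf{x}}_j-\tilde{\mathbf{x}}_i)$ term is handled by writing it as $(\mathbf{X}^{(r)}\mathbf{W}-\bar{\mathbf{X}}^{(r)})$ hit by the weighted Laplacian. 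The only cosmetic deviation is that in the convex cases the paper anchors the corrector potential at $\nabla f(\mathbf{X}^\star)$ rather than at $\nabla f(\bar{\mathbf{X}}^{(r)})$ so as to invoke the smooth-convex cocoercivity bound, but this does not change the argument.
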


\textbf{Limitation:}
Theorem \ref{theoram:convergence_rate} shows the convergence rates of the G-ECL,
but does not show that of the ECL
because $\mathbf{W}$, $\eta^\prime$, and $\{ \alpha_{i|j} \}_{ij}$ depend on each other in the ECL.
Specifically, our analysis does not prove that there exists a step size $\eta^\prime$
for the ECL to achieve the convergence rate shown in Theorem \ref{theoram:convergence_rate}.
In this work, we only provide the convergence rate of the G-ECL 
and experimentally demonstrate that the ECL also converges at the same rate as the G-ECL in Sec. \ref{sec:experimental_results}.
A more detailed discussion is provided in Sec. \ref{sec:limitation}.

\subsection{Discussion}
\label{sec:discussion}
In this section, we discuss the convergence rate of the G-ECL compared with that of the D-PSGD \citep{lian2017can}.
Table \ref{table:convergence_rate} lists the convergence rates of the G-ECL and D-PSGD.
Here, we discuss only the strongly convex case,
but this discussion holds for the convex and non-convex cases.

First, we discuss the effect of the heterogeneity of data distributions $\zeta$ in Assumption \ref{assumption:hetero} on the convergence rate.
Table \ref{table:convergence_rate} shows that the convergence rate of the D-PSGD depends on the heterogeneity of data distributions $\zeta$,
while the convergence rate of the G-ECL does not depend on $\zeta$.
Therefore, Theorem \ref{theoram:convergence_rate} indicates that the G-ECL is robust to the heterogeneity of data distributions,
which is consistent with previous works \citep{niwa2020edge,niwa2021asynchronous}
that experimentally demonstrated that the ECL is robust to the heterogeneity of data distributions.

Next, we discuss the factor $(1-p)$ contained in the convergence rate of the D-PSGD.
In the D-PSGD, the third term in the convergence rate is multiplied by $(1-p)$,
but in the G-ECL, the third term in the convergence rate is not multiplied by $(1-p)$.
That is, in the D-PSGD, the third term is $0$ when $p=1$ (i.e., $G$ is fully connected graph and $\mathbf{W}=\frac{1}{n}\mathbf{1}\mathbf{1}^\top$),
but in the G-ECL, the third term is not $0$ for any $p$.
This is because the orders of calculation of the weighted average and stochastic gradient descent are different.
The analysis of the D-PSGD evaluates the average $\bar{\mathbf{x}}^{(r)}$ 
after computing the weighted average of Eq. \eqref{eq:gossip},
whereas the analysis of the G-ECL evaluates the average $\bar{\mathbf{x}}^{(r)}$
before computing the weighted average of Eq. \eqref{eq:reformulated_2_stochastic_ecl_1}.
Thus, the third term is not multiplied by $(1-p)$ in the G-ECL.

\section{Experiments}
\label{sec:experimental_results}
In this section, using the synthetic dataset,
we experimentally demonstrate that the numerical results of the G-ECL and ECL are consistent with the convergence rate of the G-ECL shown in Theorem \ref{theoram:convergence_rate}.
Following the previous work \citep{koloskova2020unified}, we focus only on the strongly convex case.

\textbf{Comparison Methods:}
We compare the D-PSGD \citep{lian2017can}, ECL \citep{niwa2020edge}, and G-ECL.
In the D-PSGD, we use Metropolis-Hasting weights (i.e., $W_{ij}=W_{ji}=1/(|\mathcal{N}_i|+1)$)
and set the step size $\eta=10^{-3}$.
In the ECL, we set $\{ \alpha_{i|j} \}_{ij}$ as in Example \ref{example:alpha}.
Then, we set $\eta=0.5$, $\alpha=10^{3}$ (i.e., $\eta^\prime \simeq 10^{-3}$).
In the G-ECL, we set $W_{ij}=W_{ji}=1/(|\mathcal{N}_i|+1)$,
$\eta^\prime=10^{-3}$, and $\alpha_{i|j}=0$.
Note that the ECL can be regarded as a special case of the G-ECL.

\textbf{Synthetic Dataset and Network Topology:}
We set the dimension of the parameter $d=50$ and the number of nodes $n=25$. 
We set the objective function as $f_i(\mathbf{x}) \coloneqq \frac{1}{2} \| \mathbf{x} - \mathbf{b}_i \|^2$
and $\mathbf{b}_i$ is drawn from $\mathcal{N}(\mathbf{0}, \frac{\zeta^2}{d} \mathbf{I})$ for each $i \in [n]$.
The stochastic gradient is defined as $\nabla F_i(\mathbf{x} ; \xi_i) \coloneqq \nabla f_i(\mathbf{x}) + \epsilon$ where $\epsilon$ is drawn from $\mathcal{N}(\mathbf{0}, \frac{\sigma^2}{d} \mathbf{I})$ at each time.
Note that the parameters $\zeta$ and $\sigma$ correspond to Assumptions \ref{assumption:hetero} and \ref{assumption:stochastic_gradient}.
We evaluate the D-PSGD, ECL, and G-ECL on three network topologies consisting $n$ nodes: ring, torus, and fully connected graph.
We implement all comparison methods with PyTorch \cite{paszke2019pytorch},
and all the experiments are executed on a machine with Intel Xeon CPU E7-8890 v4.

\subsection{Numerical Results}
\begin{figure}[!t]
\subfigure[G-ECL]{
  \begin{minipage}[b]{0.33\columnwidth}
    \centering
    \includegraphics[width=\columnwidth]{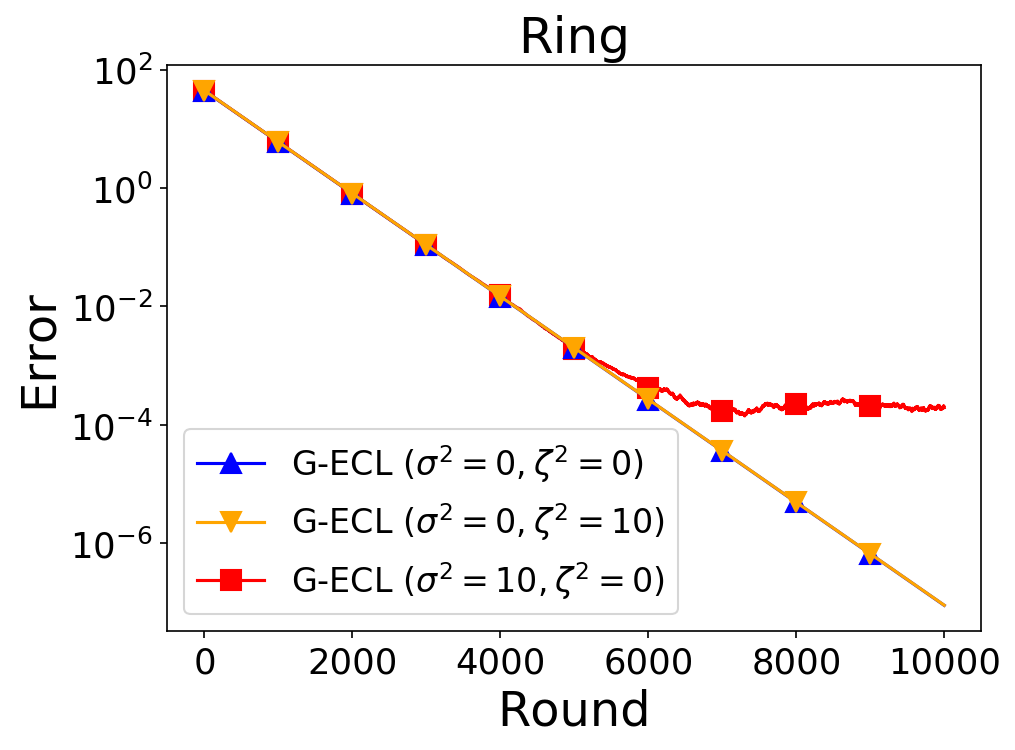}
  \end{minipage}
  \begin{minipage}[b]{0.33\columnwidth}
    \centering
    \includegraphics[width=\columnwidth]{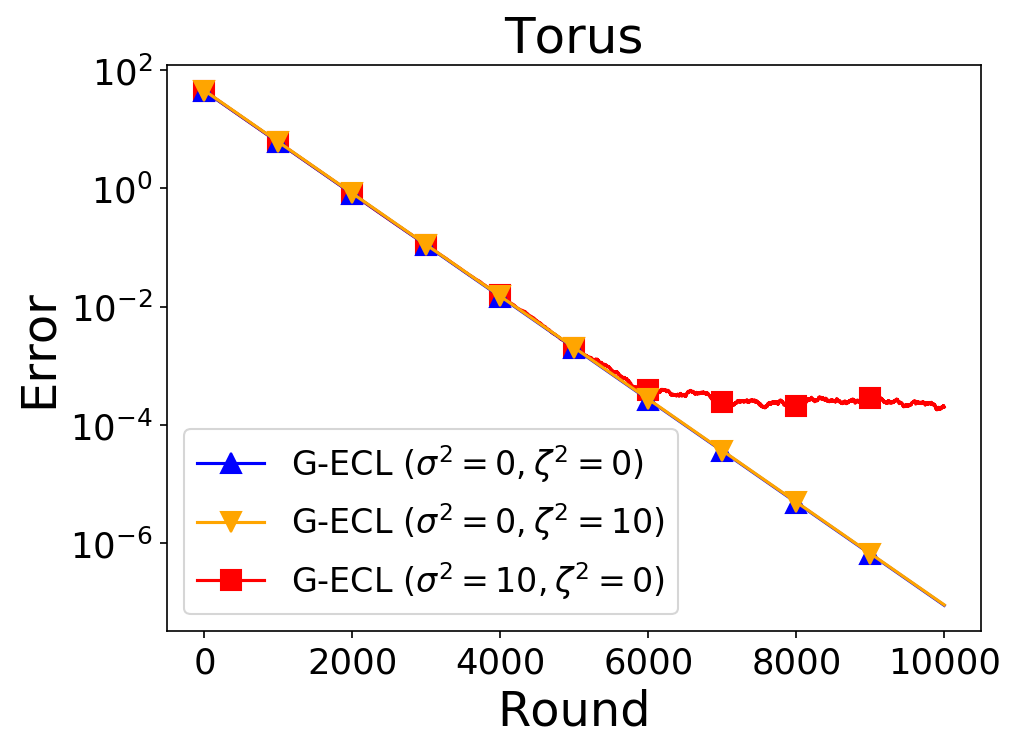}
  \end{minipage}
  \begin{minipage}[b]{0.33\columnwidth}
    \centering
    \includegraphics[width=\columnwidth]{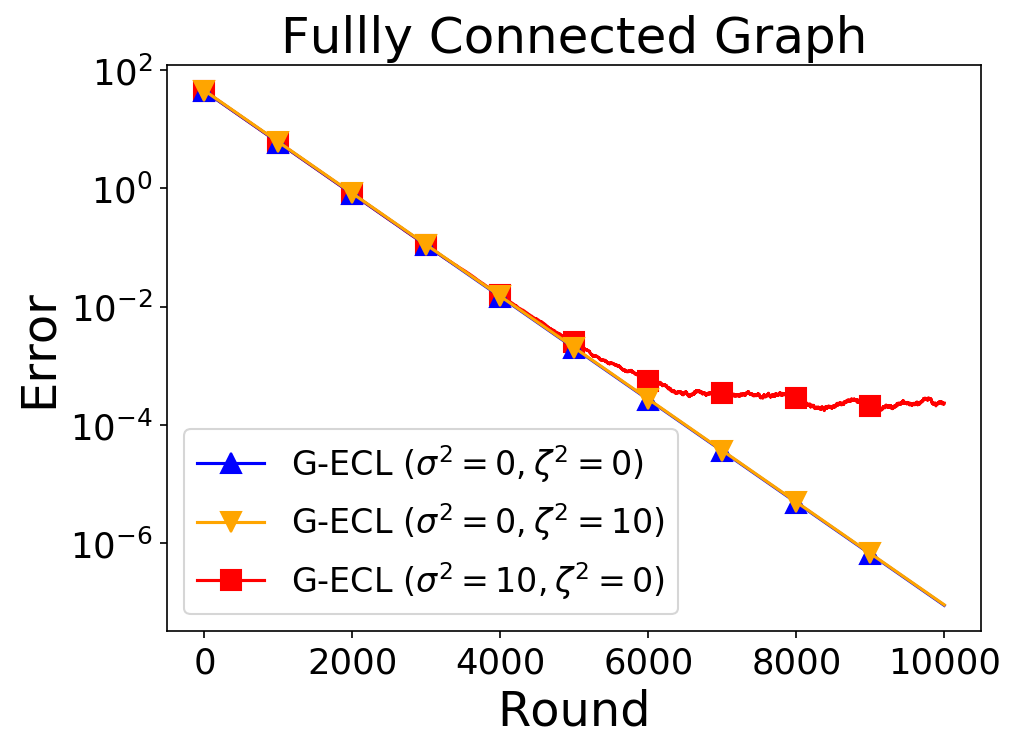}
  \end{minipage}
}
\vskip -0.05 in
\subfigure[ECL]{ 
  \begin{minipage}[b]{0.33\columnwidth}
    \centering
    \includegraphics[width=\columnwidth]{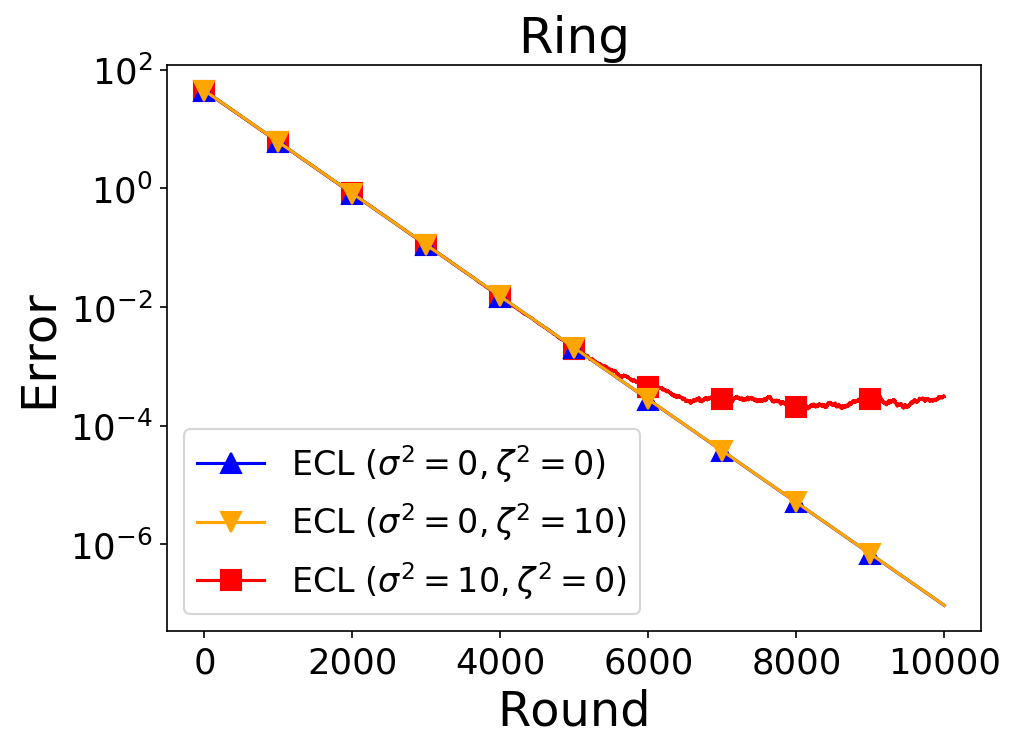}
  \end{minipage}
  \begin{minipage}[b]{0.33\columnwidth}
    \centering
    \includegraphics[width=\columnwidth]{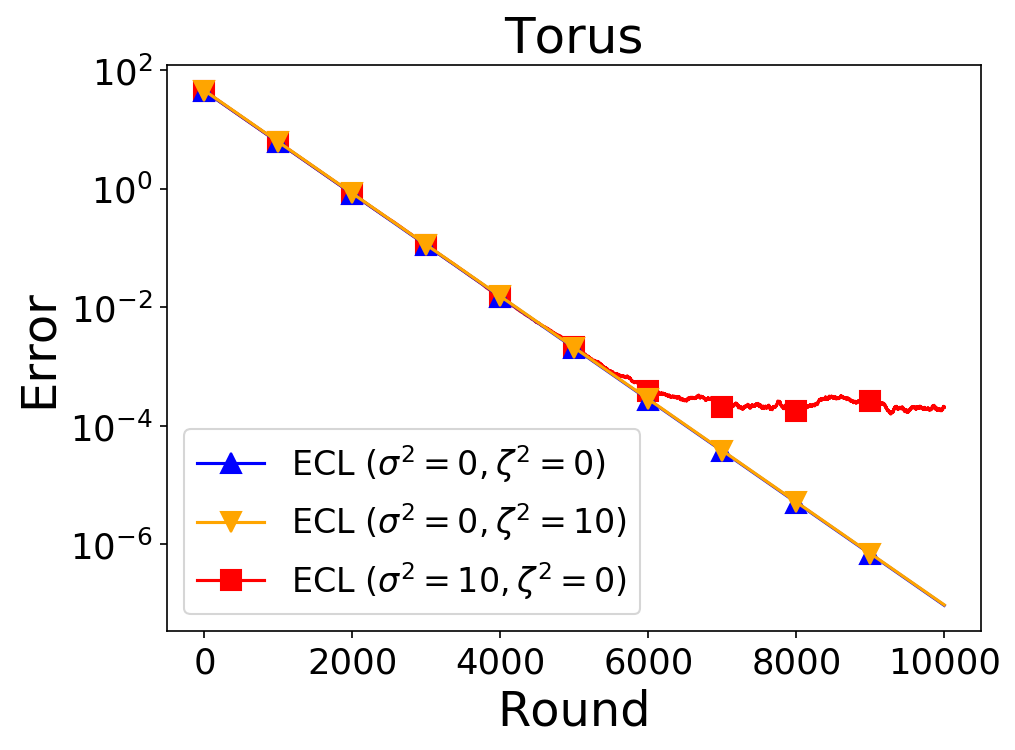}
  \end{minipage}
  \begin{minipage}[b]{0.33\columnwidth}
    \centering
    \includegraphics[width=\columnwidth]{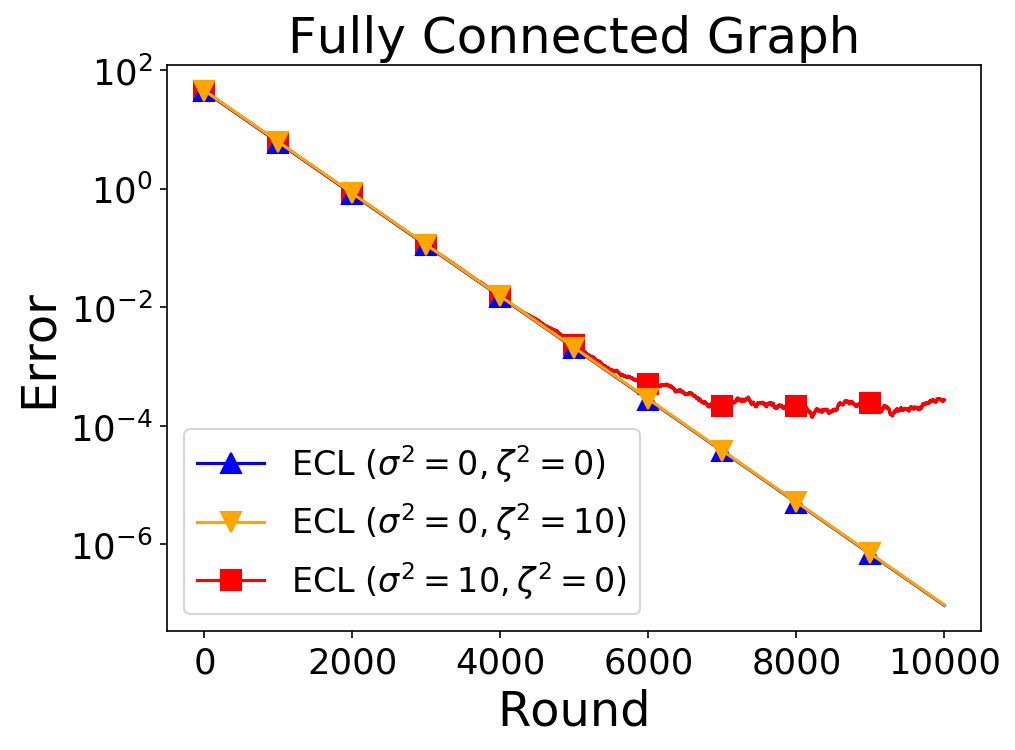}
  \end{minipage}
}
\vskip -0.05 in
\subfigure[D-PSGD]{
  \begin{minipage}[b]{0.33\columnwidth}
    \centering
    \includegraphics[width=\columnwidth]{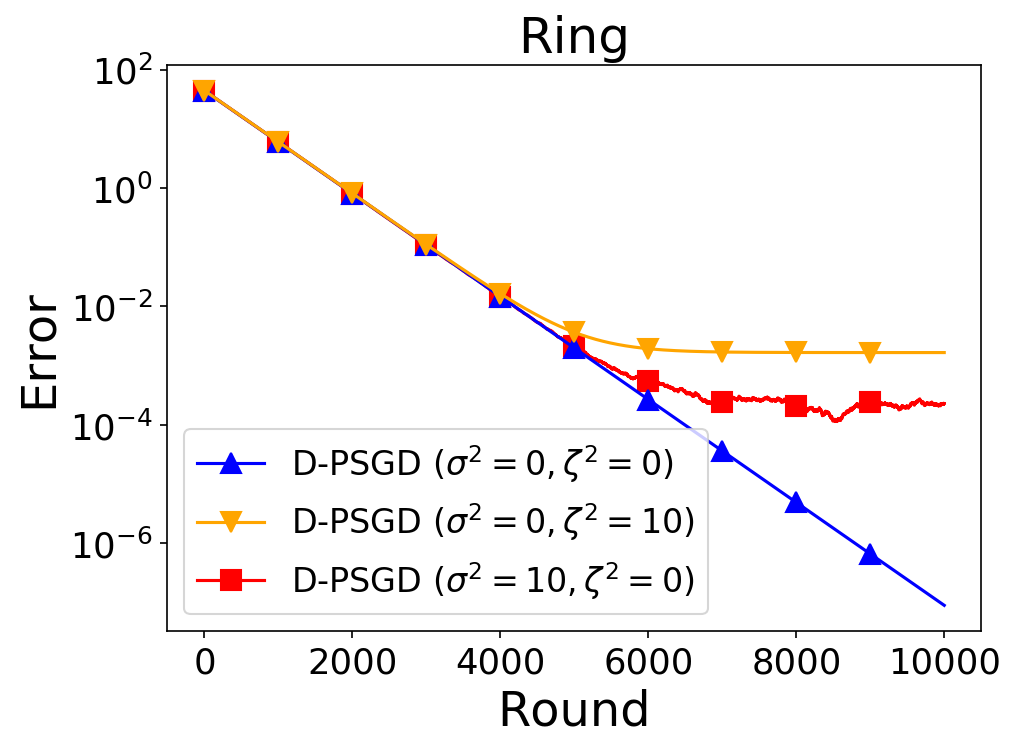}
  \end{minipage}
  \begin{minipage}[b]{0.33\columnwidth}
    \centering
    \includegraphics[width=\columnwidth]{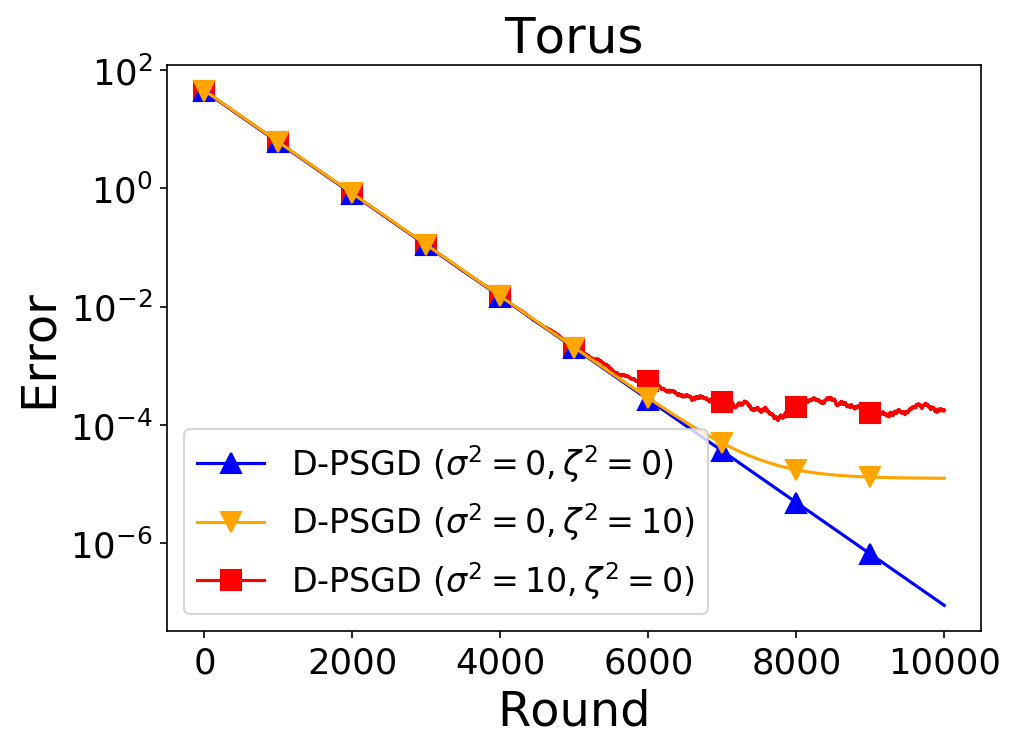}
  \end{minipage}
  \begin{minipage}[b]{0.33\columnwidth}
    \centering
    \includegraphics[width=\columnwidth]{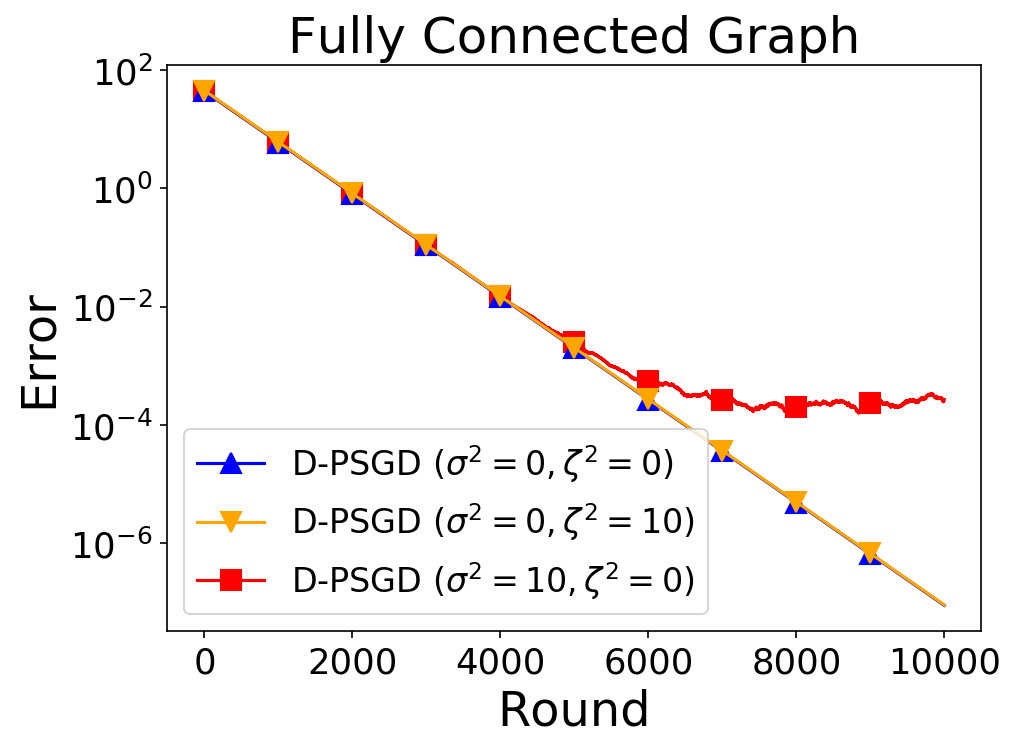}
  \end{minipage}
}
\vskip -0.1 in
\caption{Convergence of the error $\frac{1}{n} \sum_{i=1}^{n} \| \mathbf{x}_i^{(r)} - \mathbf{x}^\star \|^2$ when varying the heterogeneity of data distributions $\zeta$ and the noise of the stochastic gradient $\sigma$.}
\label{fig:convergence_rate}
\vskip -0.2 in
\end{figure}

In this section, we demonstrate that the convergence rate of the G-ECL shown in Theorem \ref{theoram:convergence_rate} coincides with the numerical results of both the G-ECL and ECL.
Fig. \ref{fig:convergence_rate} shows the error $\frac{1}{n} \sum_{i=1}^n \| \mathbf{x}_i^{(r)} - \mathbf{x}^\star \|^2$ at each round $r$ 
when varying the heterogeneity of data distributions $\zeta$ and the noise of stochastic gradient $\sigma$.

\textbf{Effect of Heterogeneity of Data Distributions:}
First, we discuss the effect of the heterogeneity of data distributions $\zeta$ on the convergence rate.
When $\sigma^2=0$, 
the results show that the G-ECL and ECL converge with $\mathcal{O}(\exp{(-R)})$ in both cases when $\zeta^2=0$ and $\zeta^2=10$ for all network topologies.
By contrast, when $\sigma^2=0$ and $G$ is ring or torus (i.e., $p<1$), 
the convergence of the D-PSGD slows down when $\zeta^2=10$ compared to when $\zeta^2=0$.
When $\sigma^2=0$ and $G$ is fully connected graph (i.e., $\mathbf{W}=\frac{1}{n}\mathbf{1}\mathbf{1}^\top$ and $p=1$),
the D-PSGD converges with $\mathcal{O}(\exp{(-R)})$ in both cases when $\zeta^2=0$ and when $\zeta^2=10$,
as the convergence rate of the D-PSGD shown in Table \ref{table:convergence_rate} indicates.
Therefore, these numerical results show that the convergence rates of both the G-ECL and ECL do not depend on the heterogeneity of data distributions $\zeta$.
This is consistent with Theorem \ref{theoram:convergence_rate}.

\textbf{Effect of Noise of Stochastic Gradient:}
Next, we discuss the effect of the noise of the stochastic gradient $\sigma$ on the convergence rate.
When $\zeta^2=0$, the results show that the convergence rates of the D-PSGD, ECL, and G-ECL slow down when $\sigma^2=10$ compared to when $\sigma^2=0$ on all network topologies,
which is consistent with Theorem \ref{theoram:convergence_rate} and the convergence rate of the D-PSGD.

\textbf{Comparison with ECL and G-ECL:}
Next, we compare the results of the ECL and G-ECL.
As we discuss in Sec. \ref{sec:convergence_results} and \ref{sec:limitation},
Theorem \ref{theoram:convergence_rate} provides only the convergence rates of the G-ECL
and does not show that there exists a step size $\eta^\prime$ for the ECL to achieve the convergence rates shown in Theorem \ref{theoram:convergence_rate}.
However, Fig. \ref{fig:convergence_rate} shows that the results of the ECL and G-ECL are almost equivalent for all settings,
and, as discussed above, the numerical results of both the ECL and G-ECL coincide with the convergence rate of the G-ECL.
Thus, experimentally, the ECL also converges with the convergence rates provided in Theorem \ref{theoram:convergence_rate}.

\section{Conclusion}
In this work, we first investigate the relationship between the Gossip algorithm and ECL.
Specifically, we show that if the hyperparameter of the ECL is set such that Assumption \ref{assumption:definition_of_alpha} holds,
a node computes the average with its neighbors in the ECL as well as in the Gossip algorithm,
and the update formulas of the ECL can be regarded as correcting the local stochastic gradient $\nabla f_i(\mathbf{x}_i ; \xi_i)$ in the Gossip algorithm.
Subsequently, to make the convergence analysis tractable, 
we increase the degrees of freedom of hyperparameters of the ECL
and propose the G-ECL, which contains the ECL as a special case.
By using the proof techniques of the Gossip algorithm \citep{koloskova2020unified}, 
we provide the convergence rate of the G-ECL in (strongly) convex and non-convex settings
and show that they do not depend on the heterogeneity of data distributions.
Through the synthetic experiments, 
we demonstrate that the numerical results of both the ECL and G-ECL coincide with the convergence rate of the G-ECL.

\bibliography{example_paper}

\newpage
\appendix
\section{Related Work}
\label{sec:related_work}

\subsection{Gossip Algorithm}
One of the most widely used algorithms for decentralized learning is the D-PSGD \cite{lian2017can} also known as the Gossip algorithm.
Recently, the convergence rate of the Gossip algorithm has been well investigated.
\citet{lian2018asynchronous} extended the Gossip algorithm to the asynchronous setting and analyzed the convergence rate.
\citet{koloskova2020unified} provided the convergence rate of the Gossip algorithm when the network topology $G$ changes over time or when using the local steps.
\citet{yuan2021decentlam} analyzed the convergence rate of the Gossip algorithm when using the momentum SGD instead of the SGD.
These theoretical analyses indicate that the convergence rate of the Gossip algorithm slows down when the data distribution held by each node is statistically heterogeneous.

\subsection{Primal-Dual Algorithm}
In addition to the Gossip algorithm,
primal-dual algorithms are applicable to decentralized learning \cite{hong2017proximal,liu2021linear,kovalev2021linearly}.
As shown in Eq. \eqref{eq:primal}, the decentralized learning problem can be formulated as a linearly constrained problem.
One of the most famous algorithms for solving a linearly constrained problem is the ADMM, which has been applied to decentralized learning \cite{boyd2011distributed,zhange2014asynchronous}.
\citet{zhang2018distributed} proposed the PDMM and showed the PDMM converges faster than the ADMM.
Recently, \citet{sherson2019derivation} showed that the PDMM can be naturally derived by using Douglas-Rachford splitting \cite{douglas1955numerical},
and \citet{niwa2020edge} applied it to a neural network,
which is called the ECL.
Recently, \citet{rajawat2020primal} provided the convergence rate of the ECL in the convex case
and proposed to apply stochastic variance reduction methods \cite{zhu2016improved,defazip2014saga} to the ECL.

\subsection{Gradient Tracking Method}
One of the most popular algorithms whose convergence rate does not depend on the heterogeneity of the data distributions is the gradient tracking method \cite{lorenzo2016next,nedi2017achieving,koloskova2021improved}.
In addition, in Sec. \ref{sec:gt_and_ecl}, we discuss the relationship between the ECL and gradient tracking method.

\newpage
\section{Relationship between Gradient Tracking Method and ECL}
\label{sec:gt_and_ecl}
In this section, we discuss the relationship between the gradient tracking methods \citep{lorenzo2016next,nedi2017achieving,koloskova2021improved} and ECL.

\subsection{Gradient Tracking Method}
In the gradient tracking method, the model parameter $\mathbf{x}_i$ is updated as follows:
\begin{align}
    \label{eq:gt_1}
    \mathbf{x}_i^{(r+1)} &= \sum_{j \in \mathcal{N}_i^+} W_{ij} \left( \mathbf{x}_j^{(r)} - \eta \mathbf{p}_j^{(r)} \right), \\
    \label{eq:gt_2}
    \mathbf{p}_i^{(r+1)} &= \sum_{j \in \mathcal{N}_i^+} W_{ij} \mathbf{p}_j^{(r)} + \left( \nabla F_i(\mathbf{x}_i^{(r+1)} ; \xi_i^{(r+1)}) - \nabla F_i(\mathbf{x}_i^{(r)} ; \xi_i^{(r)}) \right),
\end{align}
where $\mathbf{W}$ is assumed to be a mixing matrix, as in the Gossip algorithm.

\subsection{Discussion}
To discuss the relationship between the gradient tracking method and ECL, 
we further reformulate the update formulas of the ECL.
\begin{theorem}
Suppose that the hyperparameter $\theta=\frac{1}{2}$,
the dual variable $\mathbf{z}_{i|j}^{(0)}$ is initialized to $\mathbf{A}_{i|j} \mathbf{x}_j^{(0)}$,
and the hyperparameter $\{ \alpha_{i|j} \}_{ij}$ is set such that $\alpha_{i|j} = \alpha_{j|i} \geq 0$
for all $(i,j) \in \mathcal{E}$.
Then, the update formulas Eq. (\ref{eq:stochastic_ecl}) and Eqs. (\ref{eq:ecl_2}-\ref{eq:ecl_3}) are equivalent to the following:
\begin{align}
    \label{eq:reformulated_3_stochastic_ecl_1}
    \tilde{\mathbf{x}}_i^{(r)} &= \sum_{j\in\mathcal{N}_i^+} W_{ij} \mathbf{x}_j^{(r)}, \\
    \label{eq:reformulated_3_stochastic_ecl_2}
    \mathbf{x}_i^{(r+1)} &=  \tilde{\mathbf{x}}_i^{(r)} - \eta^\prime_i \mathbf{p}_i^{(r)}, \\
    \label{eq:reformulated_3_stochastic_ecl_3}
    \mathbf{p}_i^{(r+1)} 
    &= \sum_{j \in \mathcal{N}_i^+} W_{ij} \mathbf{p}_j^{(r)}
    + \left( \nabla F_i(\mathbf{x}_i^{(r+1)} ; \xi_i^{(r+1)}) - \nabla F_i(\mathbf{x}_i^{(r)} ; \xi_i^{(r)}) \right) \\
    &\qquad - \underbrace{\sum_{j \in \mathcal{N}_i} \frac{\alpha_{i|j}}{2} ( \tilde{\mathbf{x}}_j^{(r)} - \tilde{\mathbf{x}}_i^{(r)} )}_{T} \nonumber,
\end{align}
where $W_{ij}$ and $\eta^\prime_i$ are defined by Eq. \eqref{eq:definition_of_eta_prime}.
\end{theorem}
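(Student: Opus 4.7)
The plan is to derive this reformulation as a direct corollary of Theorem \ref{theorem:reformulation2}, which has already recast the ECL into the form involving $\tilde{\mathbf{x}}_i^{(r)}$ and $\mathbf{c}_i^{(r)}$ given in Eqs. (\ref{eq:reformulated_2_stochastic_ecl_1}--\ref{eq:reformulated_2_stochastic_ecl_3}). The hypotheses of both theorems coincide, so I may assume those three identities for free. The key observation motivating the restatement is that the quantity $\nabla F_i(\mathbf{x}_i^{(r)};\xi_i^{(r)}) - \mathbf{c}_i^{(r)}$ occurs as a single block in Eq. (\ref{eq:reformulated_2_stochastic_ecl_2}). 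This suggests introducing the change of variable
\begin{equation*}
\mathbf{p}_i^{(r)} \;\coloneqq\; \nabla F_i(\mathbf{x}_i^{(r)};\xi_i^{(r)}) - \mathbf{c}_i^{(r)},
\end{equation*}
with initialization $\mathbf{p}_i^{(0)} = \nabla F_i(\mathbf{x}_i^{(0)};\xi_i^{(0)}) - \tfrac{1}{2}\sum_{j\in\mathcal{N}_i}\alpha_{i|j}(\mathbf{x}_j^{(0)}-\mathbf{x}_i^{(0)})$, inherited from the $\mathbf{c}_i^{(0)}$ in Theorem \ref{theorem:reformulation2}. With this definition, Eq. (\ref{eq:reformulated_3_stochastic_ecl_1}) is identical to Eq. (\ref{eq:reformulated_2_stochastic_ecl_1}) and Eq. (\ref{eq:reformulated_3_stochastic_ecl_2}) follows immediately from Eq. (\ref{eq:reformulated_2_stochastic_ecl_2}) by substitution.

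The substantive step is then to verify the recursion (\ref{eq:reformulated_3_stochastic_ecl_3}) for $\mathbf{p}_i^{(r+1)}$. I would write
\begin{equation*}
\mathbf{p}_i^{(r+1)} \;=\; \nabla F_i(\mathbf{x}_i^{(r+1)};\xi_i^{(r+1)}) - \mathbf{c}_i^{(r+1)},
\end{equation*}
expand $\mathbf{c}_i^{(r+1)}$ using Eq. (\ref{eq:reformulated_2_stochastic_ecl_3}), and regroup the terms inside the neighborhood sum. Concretely, the sum $\sum_{j\in\mathcal{N}_i^+} W_{ij}(\mathbf{c}_j^{(r)} - \nabla F_j(\mathbf{x}_j^{(r)};\xi_j^{(r)}))$ coming out of $\mathbf{c}_i^{(r+1)}$ becomes $-\sum_{j\in\mathcal{N}_i^+} W_{ij}\mathbf{p}_j^{(r)}$ after taking the minus sign inside, while the standalone $+\nabla F_i(\mathbf{x}_i^{(r)};\xi_i^{(r)})$ in Eq. (\ref{eq:reformulated_2_stochastic_ecl_3}) combines with the leading $\nabla F_i(\mathbf{x}_i^{(r+1)};\xi_i^{(r+1)})$ to produce the stochastic-gradient difference in Eq. (\ref{eq:reformulated_3_stochastic_ecl_3}). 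The correction term $T = \sum_{j\in\mathcal{N}_i}\tfrac{\alpha_{i|j}}{2}(\tilde{\mathbf{x}}_j^{(r)}-\tilde{\mathbf{x}}_i^{(r)})$ appears with the same sign in both formulations once the overall minus sign from the definition of $\mathbf{p}_i^{(r+1)}$ is propagated, which gives exactly the $-T$ in Eq. (\ref{eq:reformulated_3_stochastic_ecl_3}).

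The reverse direction, that Eqs. (\ref{eq:reformulated_3_stochastic_ecl_1}--\ref{eq:reformulated_3_stochastic_ecl_3}) imply the original ECL updates, is automatic: given any trajectory $\{\mathbf{x}_i^{(r)},\mathbf{p}_i^{(r)}\}$ satisfying the new system, defining $\mathbf{c}_i^{(r)} \coloneqq \nabla F_i(\mathbf{x}_i^{(r)};\xi_i^{(r)}) - \mathbf{p}_i^{(r)}$ recovers a trajectory satisfying Eqs. (\ref{eq:reformulated_2_stochastic_ecl_1}--\ref{eq:reformulated_2_stochastic_ecl_3}), and then Theorem \ref{theorem:reformulation2} yields equivalence with Eqs. (\ref{eq:ecl_2}--\ref{eq:stochastic_ecl}). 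The only part requiring care is the bookkeeping in the derivation of the $\mathbf{p}$-recursion: making sure that the sign of $T$, the telescoping of $\nabla F_i$ evaluated at consecutive iterates, and the replacement of $\mathbf{c}_j^{(r)}$ by $\nabla F_j(\mathbf{x}_j^{(r)};\xi_j^{(r)}) - \mathbf{p}_j^{(r)}$ inside the weighted sum all combine correctly. I expect that to be the only algebraically delicate step; once it is done the theorem follows.
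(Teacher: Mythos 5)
Your proposal is correct and follows exactly the paper's own route: the paper's proof is the one-line substitution $\mathbf{p}_i^{(r)} \coloneqq \nabla F_i(\mathbf{x}_i^{(r)};\xi_i^{(r)}) - \mathbf{c}_i^{(r)}$ applied to Theorem \ref{theorem:reformulation2}, and your expansion of the $\mathbf{c}_i^{(r+1)}$ recursion (with the sign of $T$ flipping under the outer negation) is precisely the bookkeeping the paper leaves implicit.
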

\begin{proof}
Defining $\mathbf{p}_i^{(r)} \coloneqq \nabla F_i(\mathbf{x}_i^{(r)} ; \xi_i^{(r)}) - \mathbf{c}_i^{(r)}$,
the statement follows from Theorem \ref{theorem:reformulation2}.
\end{proof}

If we omit the term $T$ in Eq. \eqref{eq:reformulated_3_stochastic_ecl_3},
the update formulas of the gradient tracking method Eqs. (\ref{eq:gt_1}-\ref{eq:gt_2}) and that of the ECL Eqs. (\ref{eq:reformulated_3_stochastic_ecl_1}-\ref{eq:reformulated_3_stochastic_ecl_3})
are almost equivalent.
The only difference is the order of the calculation of the weighted average and the parameter update.
Moreover, since Theorem \ref{theoram:convergence_rate} indicates that
the G-ECL converges when $\alpha_{i|j}=0$ for all $(i,j)\in\mathcal{E}$,
the term $T$ in Eq. \eqref{eq:reformulated_3_stochastic_ecl_3} does not play an important role in the convergence of the G-ECL.
Therefore, the ECL modifies the local stochastic gradient $\nabla F_i(\mathbf{x}_i ; \xi_i)$ in the update formulas of the Gossip algorithm as well as the gradient tracking methods, which makes the ECL robust to the heterogeneity of data distributions.

\newpage
\section{Issues of Existing Convergence Analysis of Edge-Consensus Learning}
\label{sec:issues_of_existing_proof}
In this section, we point out the issues in the proofs of the previous study \cite{niwa2021asynchronous} that attempted to analyze the convergence rate of the ECL.

The previous work \cite{niwa2021asynchronous} analyzed the ECL and proposed setting $\{ \alpha_{i|j} \}_{ij}$ as follows:
\begin{align}
\label{eq:ecl_isvr}
    \alpha_{i|j} = \frac{1}{\eta |\mathcal{N}_i| (K-1)},
\end{align}
where $K$ denotes the number of local steps.
Note that in this work, we provide the convergence rate of the G-ECL without local steps
(i.e., we provide the convergence rate when each node communicates with its neighbors at each update).
Then, when $\{ \alpha_{i|j} \}_{ij}$ is set as in Eq. \eqref{eq:ecl_isvr}, the ECL is named the ECL-ISVR,
and the previous work \citep{niwa2021asynchronous} attempted to analyze the convergence rate of the ECL-ISVR in both (strongly) convex and non-convex cases.

However, there are some errors in the proofs.
In the strongly convex and convex cases, strong approximations were used in the first and third equations in \cite[Sec. C.1]{niwa2021asynchronous},
and these equations do not hold for either the ECL or ECL-ISVR in practice.
Similarly, in the non-convex case, strong approximations were used in the first and third equations in \cite[Sec. C.2]{niwa2021asynchronous},
and these equations do not hold.
Therefore, the convergence rates shown in this previous work can not be regarded as those of the ECL and ECL-ISVR.

\newpage
\section{Additional Experiments}
In this section, we present a more detailed analysis of the effect of the heterogeneity of data distributions $\zeta$ 
and noise of the stochastic gradient $\sigma$ on the convergence rate.
\begin{figure}[!t]
  \begin{minipage}[b]{0.33\columnwidth}
    \centering
    \includegraphics[width=\columnwidth]{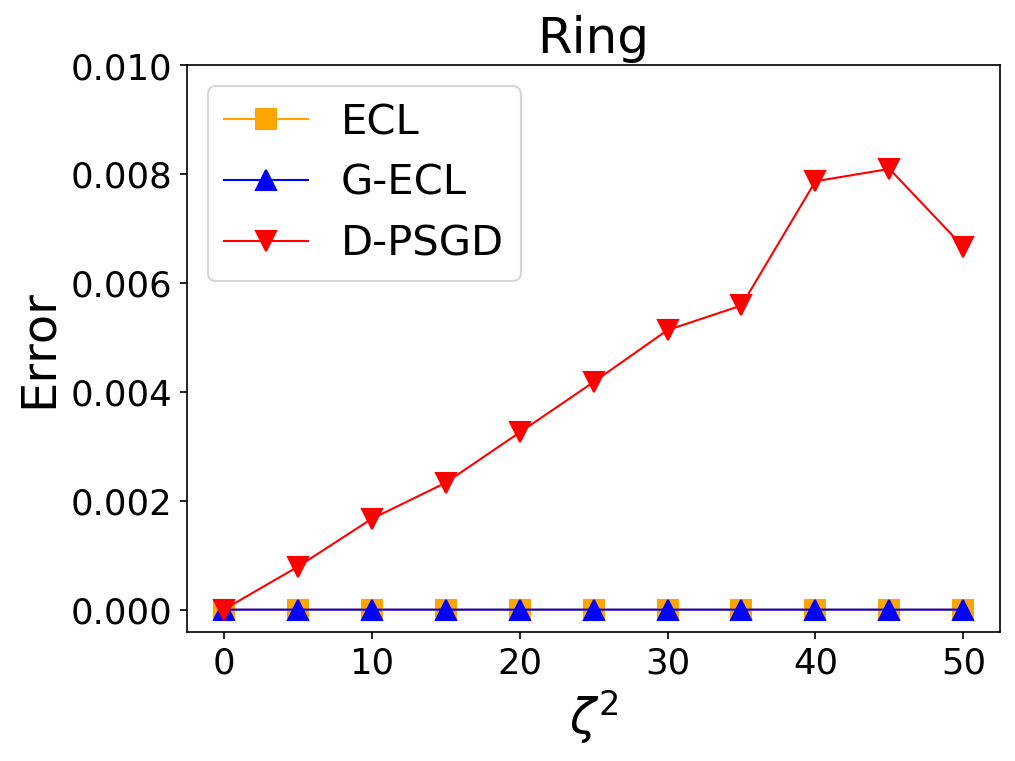}
  \end{minipage}
  \begin{minipage}[b]{0.33\columnwidth}
    \centering
    \includegraphics[width=\columnwidth]{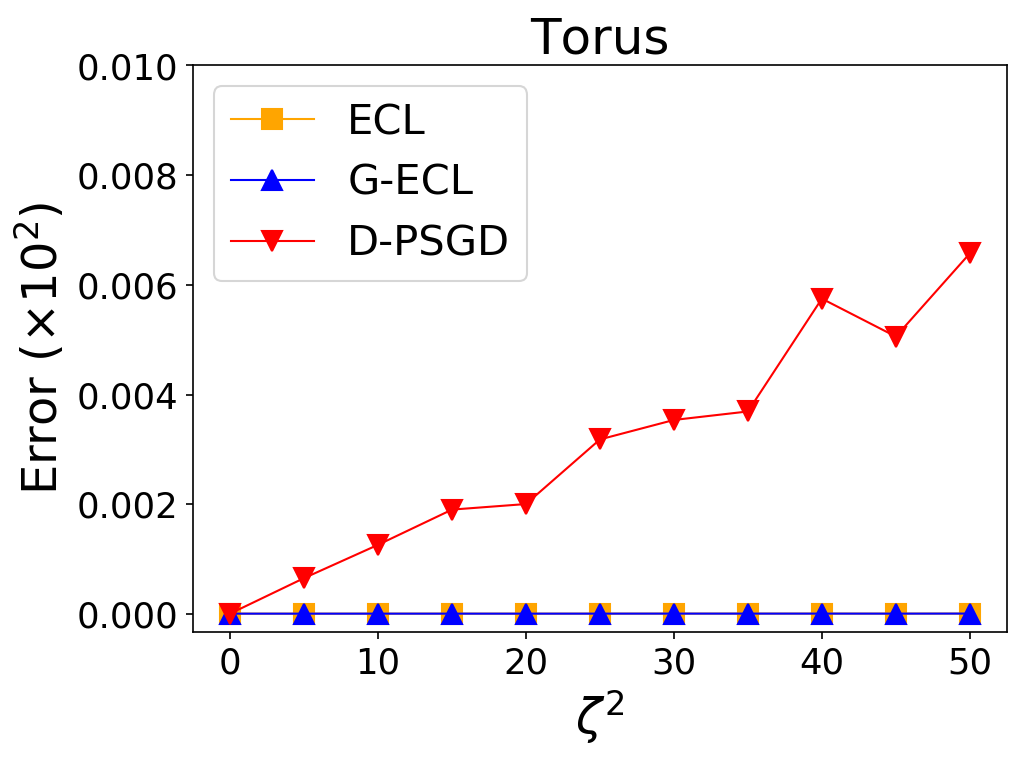}
  \end{minipage}
  \begin{minipage}[b]{0.33\columnwidth}
    \centering
    \includegraphics[width=\columnwidth]{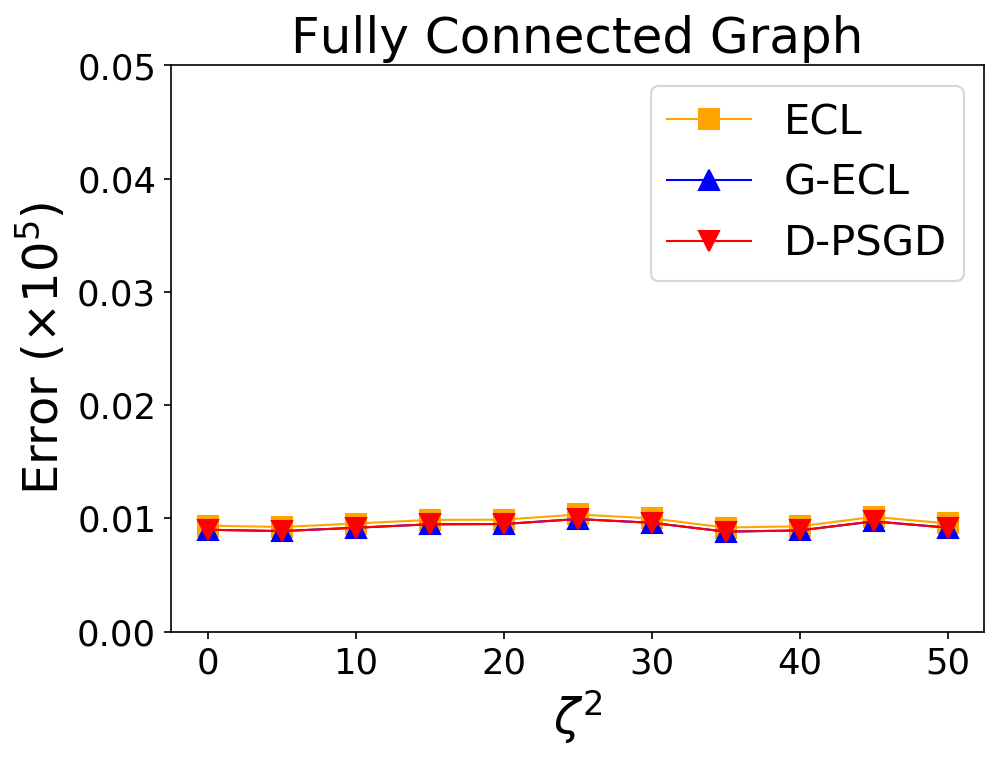}
  \end{minipage}
\vskip -0.1 in
\caption{Error $\frac{1}{n} \sum_{i=1}^{n} \| \mathbf{x}_i^{(r)} - \mathbf{x}^\star \|^2$ after $10^4$ rounds when $\zeta^2$ is varied. We evaluate the D-PSGD, ECL, and G-ECL when $G$ is ring, torus, or fully connected graph with $\sigma^2=0$.}
\vskip -0.1 in
\label{fig:hetero}
\end{figure}

\textbf{Effect of Heterogeneity of Data Distributions ($\sigma^2=0$):}
We first discuss the effect of $\zeta$ on the convergence rate when $\sigma^2=0$.
Fig. \ref{fig:hetero} shows the error $\frac{1}{n} \sum_{i=1}^{n} \| \mathbf{x}_i^{(r)} - \mathbf{x}^\star \|^2$ after $10^4$ rounds when varying $\zeta$ and setting $\sigma^2=0$.
The results show that when $G$ is a ring or torus (i.e., $p<1$), 
the error of the D-PSGD increases linearly with respect to $\zeta^2$,
and when $G$ is a fully connected graph (i.e., $p=1$),
the error of the D-PSGD is almost the same even if $\zeta^2$ is increased.
In contrast, the errors of the ECL and G-ECL are almost the same, even if $\zeta^2$ is increased for all network topologies. Therefore, the numerical results are consistent with the theoretical results in Theorem \ref{theoram:convergence_rate}.

\begin{figure}[!t]
  \begin{minipage}[b]{0.33\columnwidth}
    \centering
    \includegraphics[width=\columnwidth]{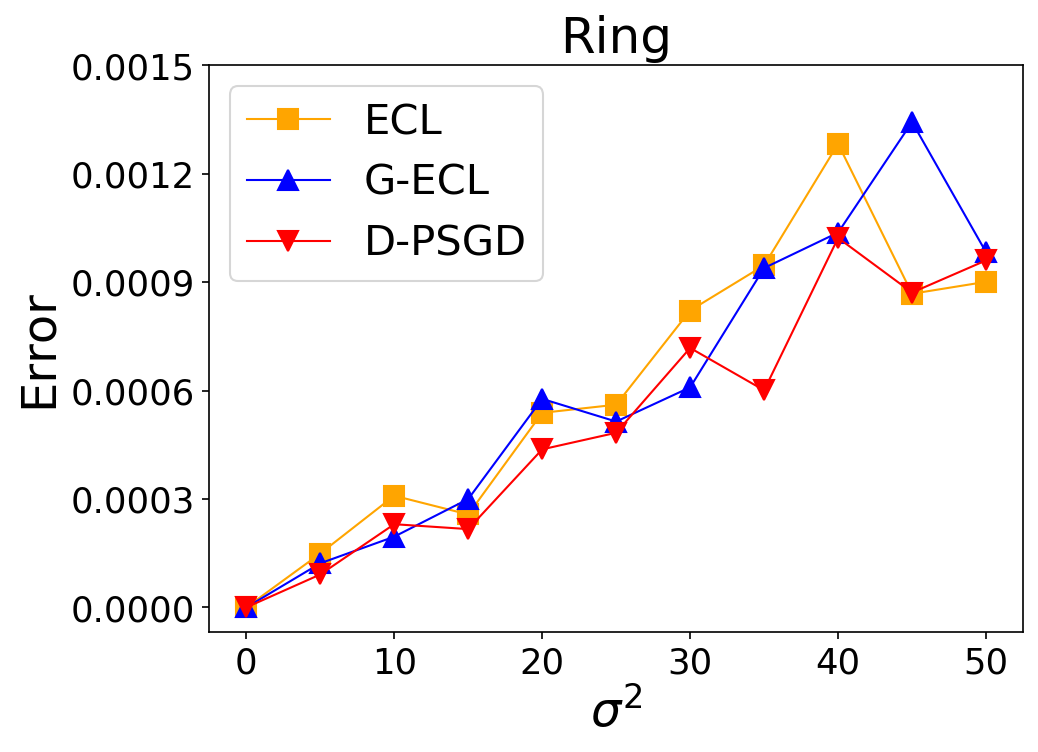}
  \end{minipage}
  \begin{minipage}[b]{0.33\columnwidth}
    \centering
    \includegraphics[width=\columnwidth]{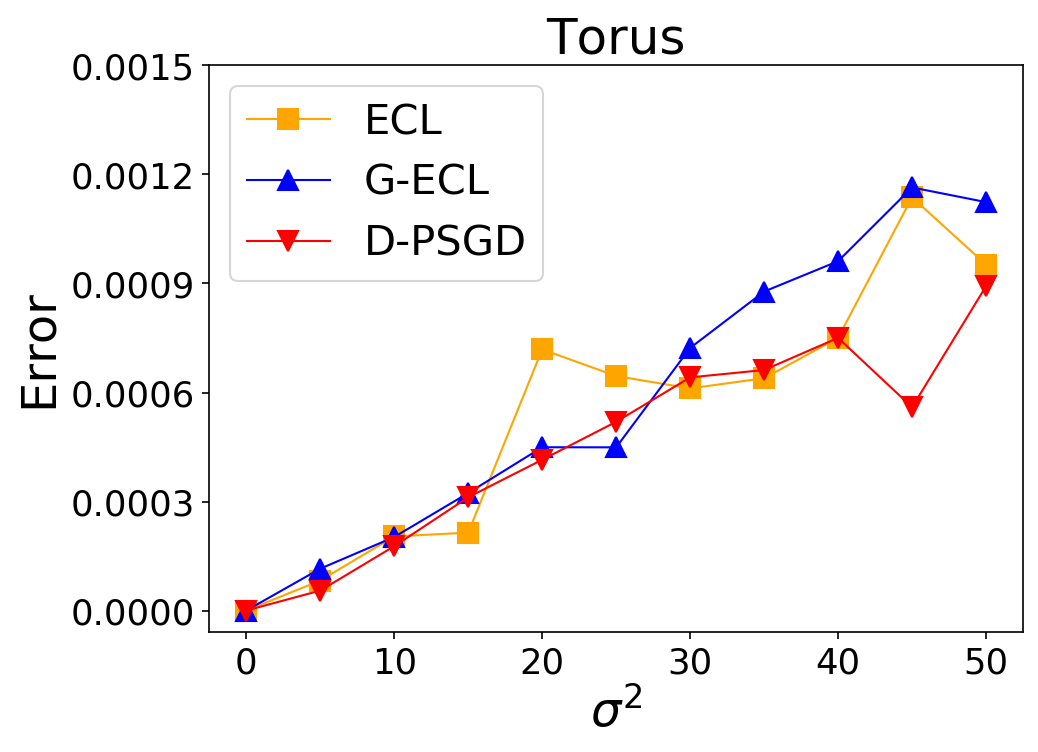}
  \end{minipage}
  \begin{minipage}[b]{0.33\columnwidth}
    \centering
    \includegraphics[width=\columnwidth]{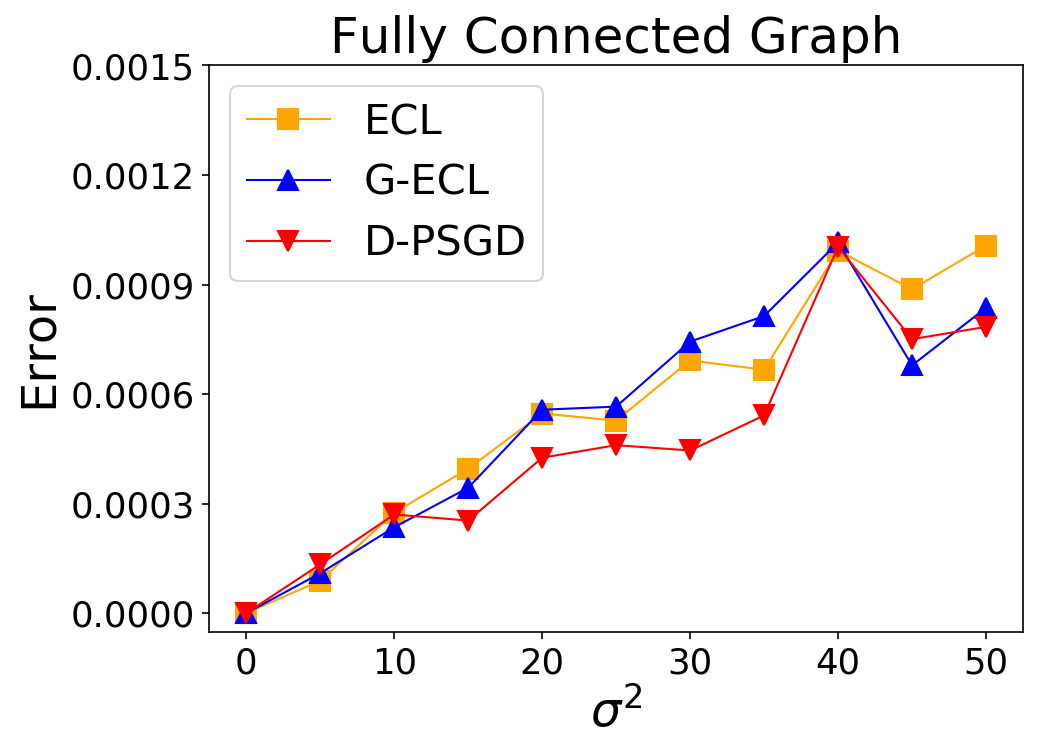}
  \end{minipage}
\vskip -0.1 in
\caption{Error $\frac{1}{n} \sum_{i=1}^{n} \| \mathbf{x}_i^{(r)} - \mathbf{x}^\star \|^2$ after $10^4$ rounds when $\sigma^2$ is varied. We evaluate the D-PSGD, ECL, and G-ECL when $G$ is ring, torus, or fully connected graph with $\zeta^2=0$.}
\vskip -0.1 in
\label{fig:stochastic_gradient}
\end{figure}

\textbf{Effect of Noise of Stochastic Gradient ($\zeta^2=0$):}
Next, we discuss the effect of $\sigma$ on the convergence rate when $\zeta^2=0$.
Fig. \ref{fig:stochastic_gradient} shows the error $\frac{1}{n} \sum_{i=1}^n \| \mathbf{x}_i^{(r)} - \mathbf{x}^\star \|^2$ after $10^4$ rounds when varying $\sigma$ and setting $\zeta^2=0$.
The results show that the errors of all comparison methods increase linearly with respect to $\sigma^2$ for all network topologies.
The theoretical results shown in Table \ref{table:convergence_rate} indicate that the convergence rates of the D-PSGD and G-ECL are $\mathcal{O}(\sigma^2)$.
Thus, the theoretical results are consistent with the numerical results.
Moreover, Fig. \ref{fig:stochastic_gradient} shows that in all comparison methods, 
the effect of $\sigma$ on the convergence is almost the same for all network topologies.
In the convergence rate of both the D-PSGD and G-ECL,
the second term $\mathcal{O}(\frac{\sigma^2}{\mu n R})$, which does not depend on the network topology,
is more dominant than the third term when the number of round $R$ is sufficiently large.
Therefore, the numerical results are consistent with our theoretical results.

\begin{figure}[!t]
  \begin{minipage}[b]{0.33\columnwidth}
    \centering
    \includegraphics[width=\columnwidth]{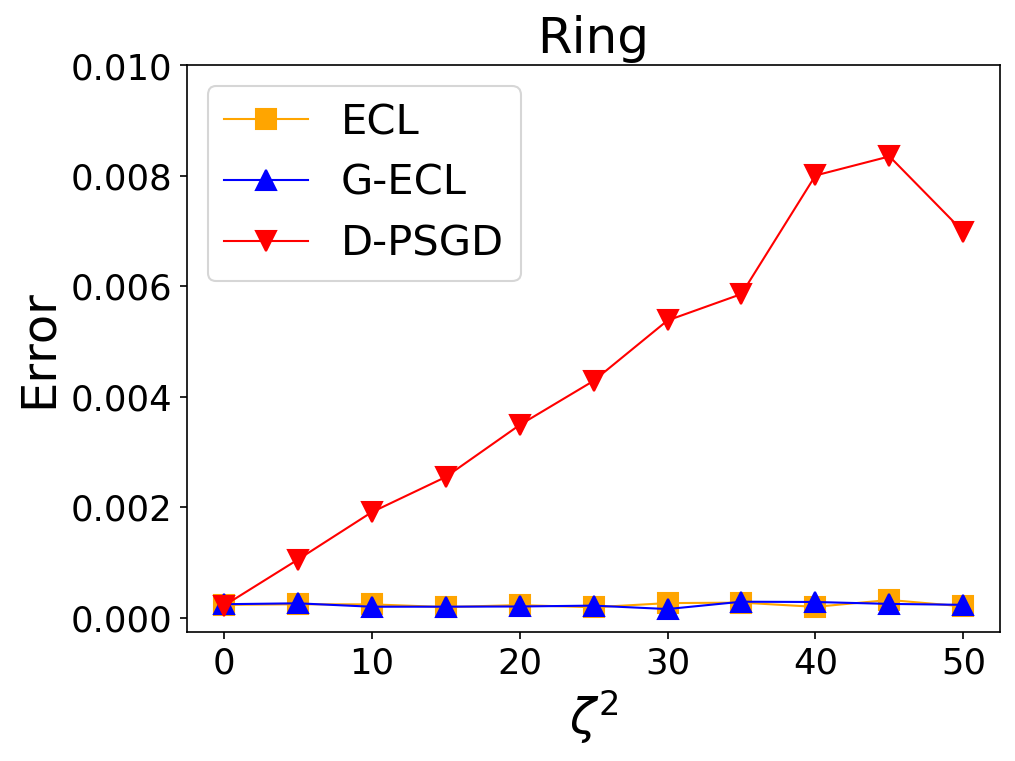}
  \end{minipage}
  \begin{minipage}[b]{0.33\columnwidth}
    \centering
    \includegraphics[width=\columnwidth]{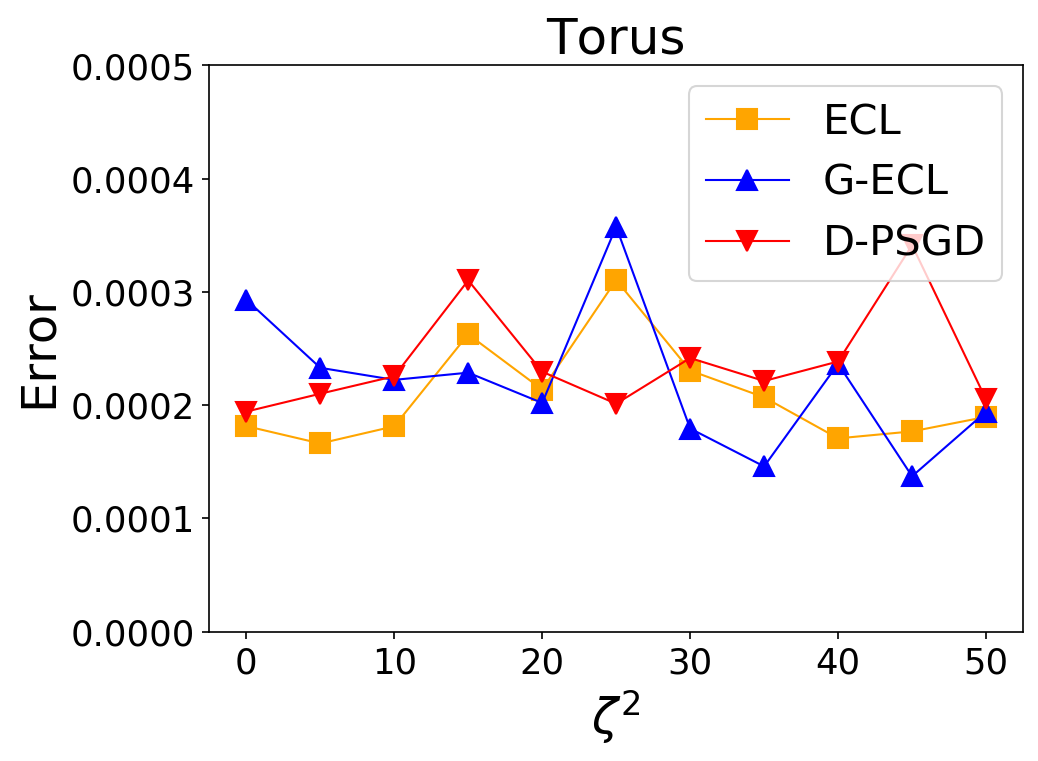}
  \end{minipage}
  \begin{minipage}[b]{0.33\columnwidth}
    \centering
    \includegraphics[width=\columnwidth]{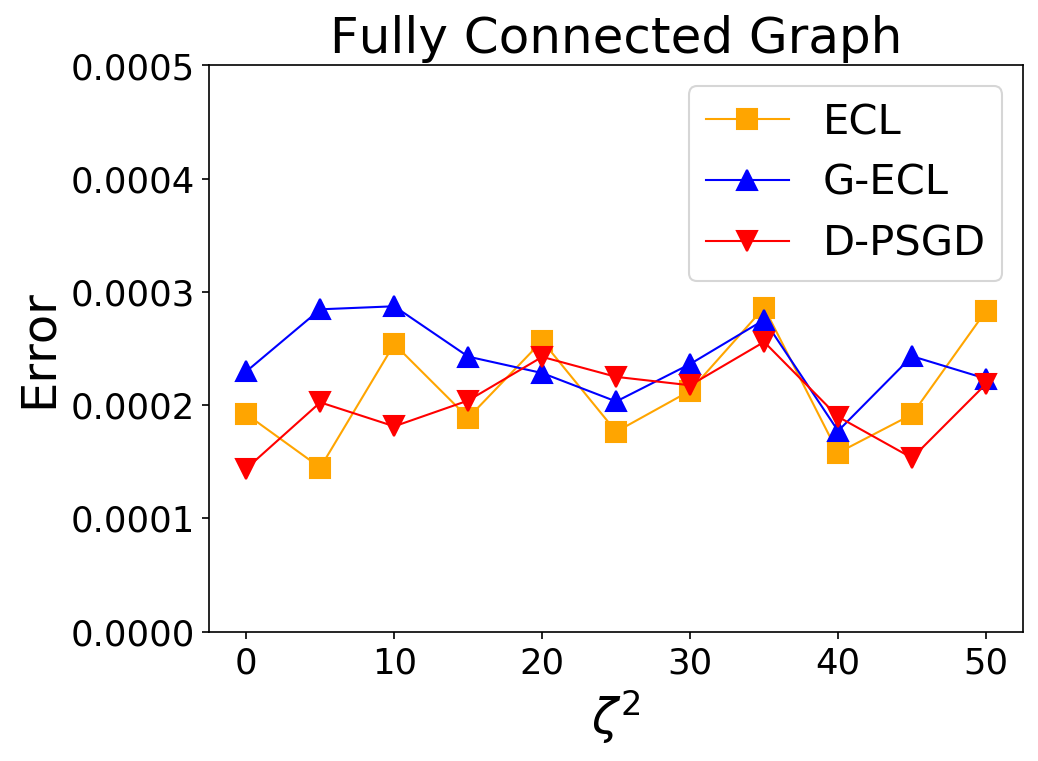}
  \end{minipage}
\vskip -0.1 in
\caption{Error $\frac{1}{n} \sum_{i=1}^{n} \| \mathbf{x}_i^{(r)} - \mathbf{x}^\star \|^2$ after $10^4$ rounds when $\zeta^2$ is varied. We evaluate the D-PSGD, ECL, and G-ECL when $G$ is ring, torus, or fully connected graph with $\sigma^2=10$.}
\vskip -0.1 in
\label{fig:hetero_sigma_10}
\end{figure}

\textbf{Effect of Heterogeneity of Data Distributions ($\sigma^2=10$):}
Next, we discuss the effect of $\zeta$ on the convergence rate when $\sigma^2=10$.
Fig. \ref{fig:hetero_sigma_10} shows the error $\frac{1}{n} \sum_{i=1}^{n} \| \mathbf{x}_i^{(r)} - \mathbf{x}^\star \|^2$ after $10^4$ rounds when varying $\zeta$ and setting $\sigma^2=10$.
The results show that when $G$ is a ring, 
the error of the D-PSGD increases linearly with respect to $\zeta^2$.
When $G$ is a torus or fully connected graph,
the error of the D-PSGD is almost the same even if $\zeta^2$ is increased.
This is because the effect of $\sigma^2$ is more dominant than the one of $\zeta^2$.
Figs. \ref{fig:hetero} and \ref{fig:stochastic_gradient} show that when $G$ is a torus, the error of the D-PSGD is approximately $1.0 \times 10^{-5}$ when $\zeta^2>0$ and $\sigma^2=0$
and is approximately $1.0 \times 10^{-4}$ when $\zeta^2=0$ and $\sigma^2>0$.
Therefore, Fig. \ref{fig:hetero_sigma_10} indicates that when $G$ is a torus or fully connected graph,
the error of the D-PSGD is almost the same even if $\zeta^2$ is increased.
In contrast, the errors of the ECL and G-ECL are almost the same, even if $\zeta^2$ is increased for all network topologies. Therefore, the numerical results are consistent with the theoretical results in Theorem \ref{theoram:convergence_rate}.

\begin{figure}[!t]
  \begin{minipage}[b]{0.33\columnwidth}
    \centering
    \includegraphics[width=\columnwidth]{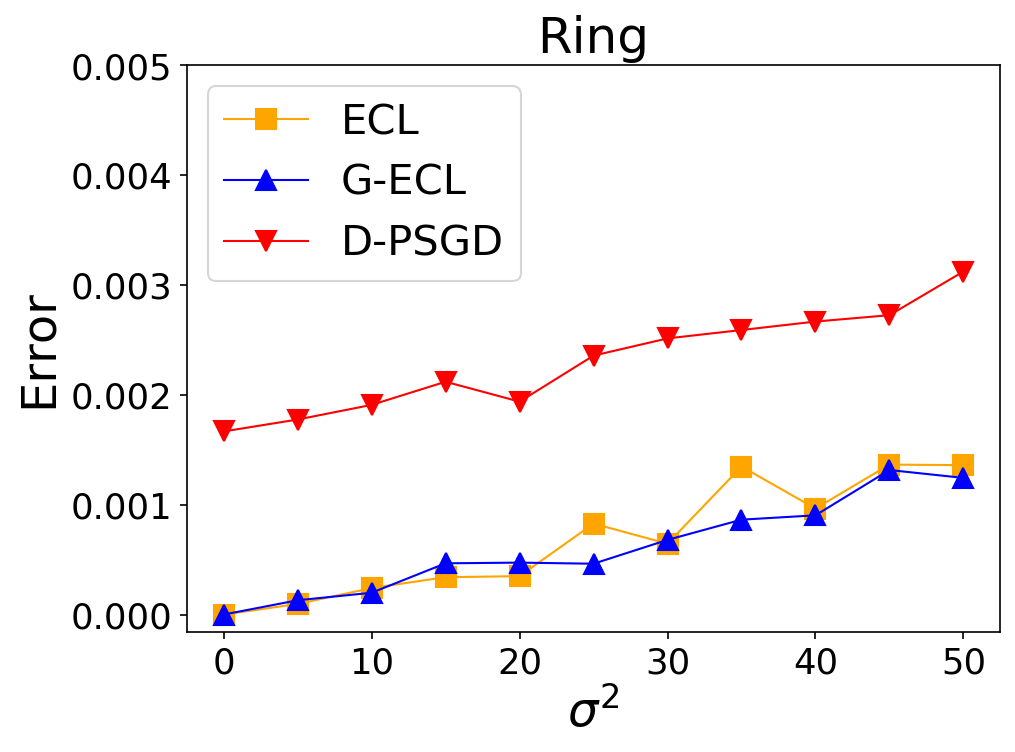}
  \end{minipage}
  \begin{minipage}[b]{0.33\columnwidth}
    \centering
    \includegraphics[width=\columnwidth]{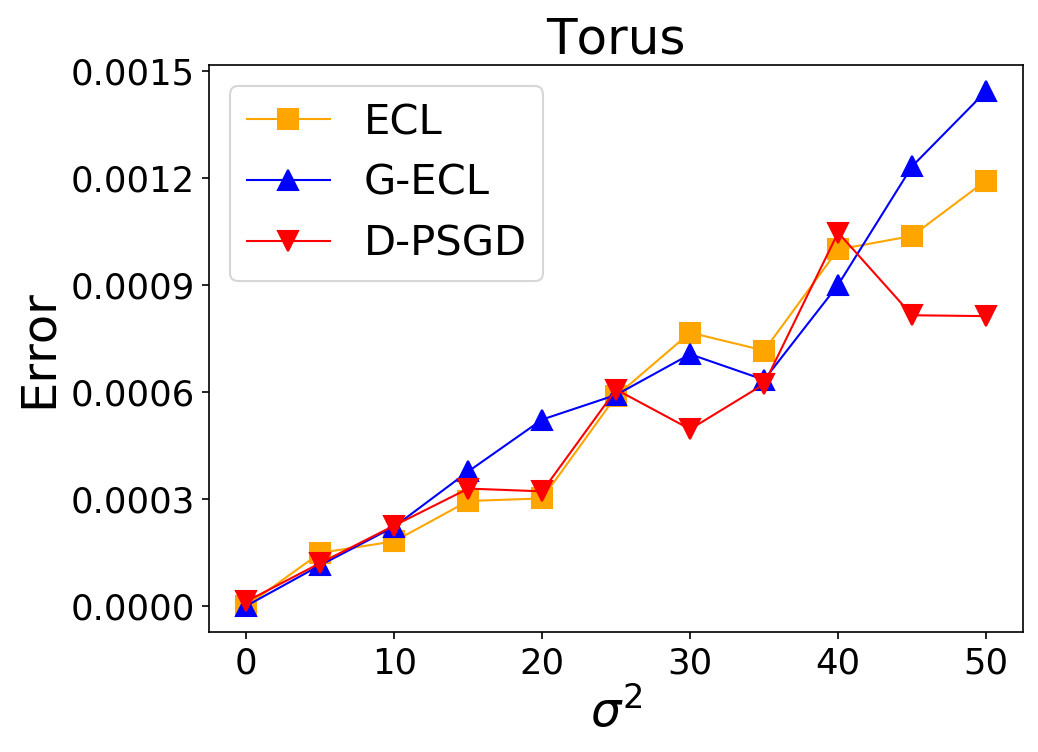}
  \end{minipage}
  \begin{minipage}[b]{0.33\columnwidth}
    \centering
    \includegraphics[width=\columnwidth]{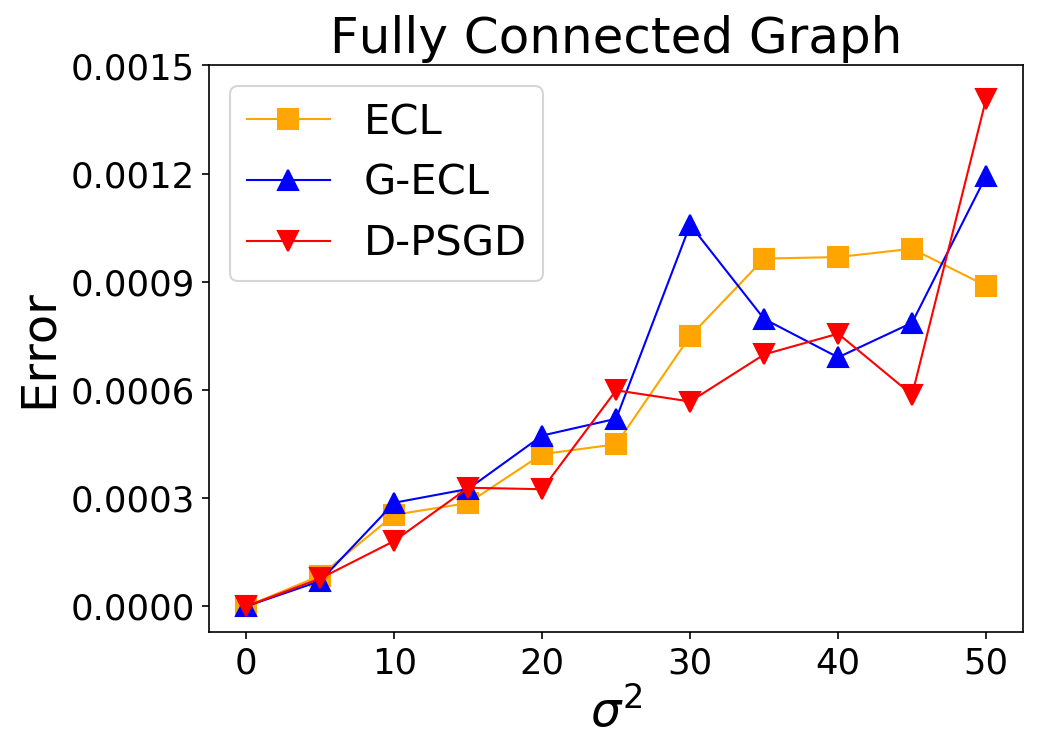}
  \end{minipage}
\vskip -0.1 in
\caption{Error $\frac{1}{n} \sum_{i=1}^{n} \| \mathbf{x}_i^{(r)} - \mathbf{x}^\star \|^2$ after $10^4$ rounds when $\sigma^2$ is varied. We evaluate the D-PSGD, ECL, and G-ECL when $G$ is ring, torus, or fully connected graph with $\zeta^2=10$.}
\vskip -0.1 in
\label{fig:stochastic_gradient_zeta_10}
\end{figure}

\textbf{Effect of Noise of Stochastic Gradient ($\zeta^2=10$):}
Next, we discuss the effect of $\sigma$ on the convergence rate when $\zeta^2=10$.
Fig. \ref{fig:stochastic_gradient_zeta_10} shows the error $\frac{1}{n} \sum_{i=1}^n \| \mathbf{x}_i^{(r)} - \mathbf{x}^\star \|^2$ after $10^4$ rounds when varying $\sigma$ and setting $\zeta^2=10$.
The results show that the errors of all comparison methods increase linearly with respect to $\sigma^2$ for all network topologies.
When $G$ is a ring, the error of the D-PSGD is consistently larger than those of the G-ECL and ECL.
This is because the error of the D-PSGD is larger than those of the G-ECL and ECL when $\zeta^2>0$, as Figs. \ref{fig:hetero} and \ref{fig:hetero_sigma_10} indicate.
When $G$ is a torus or fully connected graph, the errors of all comparison methods are almost the same.
This is because the effect of $\sigma^2$ is more dominant than that of $\zeta^2$, as Fig. \ref{fig:hetero_sigma_10} indicates.
Therefore, the numerical results are consistent with convergence rates of the G-ECL.

\newpage 
\section{Proof of Theorem \ref{theorem:reformulation2}}
\begin{lemma}
\label{lemma:reformulation}
Suppose that the hyperparameter $\theta=\frac{1}{2}$,
the dual variable $\mathbf{z}_{i|j}^{(0)}$ is initialized to $\mathbf{A}_{i|j} \mathbf{x}_j^{(0)}$,
and the hyperparameter $\{ \alpha_{i|j} \}_{ij}$ is set such that $\alpha_{i|j} = \alpha_{j|i} \geq 0$
for all $(i,j) \in \mathcal{E}$.
Then, the update formulas Eq. (\ref{eq:stochastic_ecl}) and Eqs. (\ref{eq:ecl_2}-\ref{eq:ecl_3}) are equivalent to the following:
\begin{align}
    \mathbf{x}_i^{(r+1)} &= \!\!\! \sum_{j\in\mathcal{N}_i^{+}} \!\!\! W_{ij} \mathbf{x}_j^{(r)} 
    - \frac{\eta}{1 + \eta \sum_{j\in\mathcal{N}_i} \alpha_{i|j}} \left( \nabla F_i(\mathbf{x}_i^{(r)} ; \xi_i^{(r)}) - \mathbf{c}_i^{(r)} \right), \\
    \mathbf{c}_{i}^{(r+1)} &= \mathbf{c}_i^{(r)} + \frac{1}{2} \sum_{j\in\mathcal{N}_i} \alpha_{i|j} (\mathbf{x}_j^{(r+1)} - \mathbf{x}_i^{(r+1)} ),
\end{align}
where $\mathbf{c}_i^{(0)} \coloneqq \frac{1}{2} \sum_{j\in\mathcal{N}_i} \alpha_{i|j} (\mathbf{x}_j^{(0)} - \mathbf{x}_i^{(0)})$, and $W_{ij}$ is defined as follows:
\begin{align}
    W_{ij} \coloneqq 
    \begin{dcases}
    \frac{2 + \eta \sum_{k\in\mathcal{N}_i} \alpha_{i|k}}{2 ( 1 + \eta \sum_{k\in\mathcal{N}_i} \alpha_{i|k})} & \text{if} \;\;  i=j \\
    \frac{\eta \alpha_{i|j}}{2 ( 1 + \eta \sum_{k\in\mathcal{N}_i} \alpha_{i|k})} & \text{if} \;\;  (i,j) \in \mathcal{E} \\
    0 & \text{otherwise}
    \end{dcases}.
\end{align}
\end{lemma}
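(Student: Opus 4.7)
The plan is to prove the equivalence by solving the proximal step in closed form, choosing $\mathbf{c}_i^{(r)}$ as an algebraic function of the dual variables $\{\mathbf{z}_{i|j}^{(r)}\}$, and then showing that the Douglas--Rachford update for $\mathbf{z}_{i|j}^{(r)}$ with $\theta=\tfrac12$ forces $\mathbf{c}_i^{(r)}$ to satisfy the claimed additive recursion.

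First I would compute the argmin in Eq.~\eqref{eq:stochastic_ecl} explicitly. Differentiating with respect to $\mathbf{x}_i$ and using $\mathbf{A}_{i|j}^\top\mathbf{A}_{i|j}=\mathbf{I}$ gives a linear equation whose solution is
\begin{equation*}
\mathbf{x}_i^{(r+1)} = \frac{1}{1+\eta\sum_{j}\alpha_{i|j}}\Bigl(\mathbf{x}_i^{(r)} - \eta\,\nabla F_i(\mathbf{x}_i^{(r)};\xi_i^{(r)}) + \eta\sum_{j\in\mathcal{N}_i}\alpha_{i|j}\,\mathbf{A}_{i|j}^\top\mathbf{z}_{i|j}^{(r)}\Bigr),
\end{equation*}
matching line~4 of Alg.~\ref{alg:stochastic_ecl}. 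I would then \emph{define}
\begin{equation*}
\mathbf{c}_i^{(r)} \coloneqq \sum_{j\in\mathcal{N}_i}\alpha_{i|j}\Bigl(\mathbf{A}_{i|j}^\top\mathbf{z}_{i|j}^{(r)} - \tfrac{1}{2}(\mathbf{x}_i^{(r)}+\mathbf{x}_j^{(r)})\Bigr)
\end{equation*}
and substitute back. A direct algebraic regrouping isolates $\mathbf{x}_i^{(r)} + \tfrac{\eta}{2}\sum_j\alpha_{i|j}(\mathbf{x}_i^{(r)}+\mathbf{x}_j^{(r)})$ as the ``consensus part'', which is exactly $(1+\eta\sum\alpha_{i|k})\sum_{j\in\mathcal{N}_i^+}W_{ij}\mathbf{x}_j^{(r)}$ for the claimed weights, giving the target $\mathbf{x}_i^{(r+1)}$ formula. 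The initialization check is immediate: $\mathbf{z}_{i|j}^{(0)}=\mathbf{A}_{i|j}\mathbf{x}_j^{(0)}$ implies $\mathbf{A}_{i|j}^\top\mathbf{z}_{i|j}^{(0)}=\mathbf{x}_j^{(0)}$, so the defined $\mathbf{c}_i^{(0)}$ collapses to $\tfrac12\sum_j\alpha_{i|j}(\mathbf{x}_j^{(0)}-\mathbf{x}_i^{(0)})$.

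The main obstacle is verifying the recursion $\mathbf{c}_i^{(r+1)}=\mathbf{c}_i^{(r)}+\tfrac12\sum_{j}\alpha_{i|j}(\mathbf{x}_j^{(r+1)}-\mathbf{x}_i^{(r+1)})$, because substituting Eqs.~\eqref{eq:ecl_2}--\eqref{eq:ecl_3} with $\theta=\tfrac12$ produces
\begin{equation*}
\mathbf{A}_{i|j}^\top\mathbf{z}_{i|j}^{(r+1)} = \tfrac{1}{2}\mathbf{A}_{i|j}^\top(\mathbf{z}_{i|j}^{(r)}+\mathbf{z}_{j|i}^{(r)}) + \mathbf{x}_j^{(r+1)},
\end{equation*}
using $\mathbf{A}_{j|i}=-\mathbf{A}_{i|j}$ and $\mathbf{A}_{i|j}^\top\mathbf{A}_{i|j}=\mathbf{I}$. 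This expression contains the ``cross'' term $\mathbf{A}_{i|j}^\top\mathbf{z}_{j|i}^{(r)}=-\mathbf{A}_{j|i}^\top\mathbf{z}_{j|i}^{(r)}$ belonging to the neighbor, which must be eliminated. To do so I would prove, by induction on $r$, the symmetry identity
\begin{equation*}
\mathbf{A}_{i|j}^\top\mathbf{z}_{i|j}^{(r)} + \mathbf{A}_{j|i}^\top\mathbf{z}_{j|i}^{(r)} = \mathbf{x}_i^{(r)}+\mathbf{x}_j^{(r)} \qquad \forall (i,j)\in\mathcal{E}.
\end{equation*}
The base case follows from the initialization; the inductive step follows by summing the displayed equation above with its $(i\leftrightarrow j)$ counterpart, since $\mathbf{A}_{i|j}^\top+\mathbf{A}_{j|i}^\top=0$ kills the $\mathbf{z}$ terms and leaves exactly $\mathbf{x}_i^{(r+1)}+\mathbf{x}_j^{(r+1)}$. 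This identity crucially relies on $\theta=\tfrac12$ and on the symmetric assumption $\alpha_{i|j}=\alpha_{j|i}$.

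Once the symmetry identity is established, I would use it to replace $\mathbf{A}_{j|i}^\top\mathbf{z}_{j|i}^{(r)}$ with $\mathbf{x}_i^{(r)}+\mathbf{x}_j^{(r)}-\mathbf{A}_{i|j}^\top\mathbf{z}_{i|j}^{(r)}$, yielding the self-contained recursion
\begin{equation*}
\mathbf{A}_{i|j}^\top\mathbf{z}_{i|j}^{(r+1)} = \mathbf{A}_{i|j}^\top\mathbf{z}_{i|j}^{(r)} - \tfrac{1}{2}(\mathbf{x}_i^{(r)}+\mathbf{x}_j^{(r)}) + \mathbf{x}_j^{(r+1)}.
\end{equation*}
Multiplying by $\alpha_{i|j}$, summing over $j\in\mathcal{N}_i$, and subtracting $\tfrac12\sum_j\alpha_{i|j}(\mathbf{x}_i^{(r+1)}+\mathbf{x}_j^{(r+1)})$ from both sides then produces $\mathbf{c}_i^{(r+1)}=\mathbf{c}_i^{(r)}+\tfrac12\sum_j\alpha_{i|j}(\mathbf{x}_j^{(r+1)}-\mathbf{x}_i^{(r+1)})$, completing the proof.
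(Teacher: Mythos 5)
Your proposal is correct and follows essentially the same route as the paper: your explicit definition $\mathbf{c}_i^{(r)}=\sum_{j}\alpha_{i|j}\bigl(\mathbf{A}_{i|j}\mathbf{z}_{i|j}^{(r)}-\tfrac12(\mathbf{x}_i^{(r)}+\mathbf{x}_j^{(r)})\bigr)$ coincides with the paper's $\mathbf{c}_i^{(r)}=\mathbf{b}_i^{(r)}+\tfrac12\sum_j\alpha_{i|j}(\mathbf{x}_j^{(r)}-\mathbf{x}_i^{(r)})$ after unpacking $\mathbf{u}_{i|j}$, and your symmetry identity $\mathbf{A}_{i|j}\mathbf{z}_{i|j}^{(r)}+\mathbf{A}_{j|i}\mathbf{z}_{j|i}^{(r)}=\mathbf{x}_i^{(r)}+\mathbf{x}_j^{(r)}$ is exactly the paper's key fact $\mathbf{u}_{i|j}^{(r)}=\mathbf{u}_{j|i}^{(r)}$ in different clothing. (One cosmetic remark: that identity uses only $\theta=\tfrac12$ and the initialization, not the symmetry $\alpha_{i|j}=\alpha_{j|i}$, since the $\alpha$'s never enter the dual update.)
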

\begin{proof}
The update formulas of the ECL can be written as follows:
\begin{align*}
    \mathbf{x}_i^{(r+1)} &= \frac{1}{1 + \eta \sum_{j\in\mathcal{N}_i} \alpha_{i|j}} \left( \mathbf{x}_i^{(r)} - \eta \nabla F_i(\mathbf{x}_i^{(r)} ; \xi_i^{(r)}) + \eta \sum_{j\in\mathcal{N}_i} \alpha_{i|j} \mathbf{A}_{i|j} \mathbf{z}_{i|j}^{(r)} \right), \\
    \mathbf{z}_{i|j}^{(r+1)} &= \frac{1}{2} (\mathbf{z}_{i|j}^{(r)} + \mathbf{z}_{j|i}^{(r)} ) - \mathbf{A}_{j|i} \mathbf{x}_j^{(r+1)}.
\end{align*}
Defining $\mathbf{u}_{i|j}^{(r)} \coloneqq \mathbf{z}_{i|j}^{(r)} + \mathbf{A}_{j|i} \mathbf{x}_{j}^{(r)}$,
the update formulas of the ECL can be rewritten as follows:
\begin{align*}
    \mathbf{x}_i^{(r+1)} &= \frac{1}{1 + \eta \sum_{j\in\mathcal{N}_i} \alpha_{i|j}} \left( \mathbf{x}_i^{(r)} + \eta \left( \sum_{j\in\mathcal{N}_i} \alpha_{i|j} \mathbf{x}_j^{(r)} \right) - \eta \nabla F_i(\mathbf{x}_i^{(r)} ; \xi_i^{(r)}) + \eta \sum_{j\in\mathcal{N}_i} \alpha_{i|j} \mathbf{A}_{i|j} \mathbf{u}_{i|j}^{(r)} \right), \\
    \mathbf{u}_{i|j}^{(r+1)} &= \frac{1}{2} (\mathbf{u}_{i|j}^{(r)} - \mathbf{A}_{j|i} \mathbf{x}_j^{(r)} ) + \frac{1}{2} (\mathbf{u}_{j|i}^{(r)} - \mathbf{A}_{i|j} \mathbf{x}_i^{(r)} ).
\end{align*}

From the above update formula for $\mathbf{u}_{i|j}$,
the update formulas for $\mathbf{u}_{i|j}$ and $\mathbf{u}_{j|i}$ are equivalent.
Then, it holds that for any round $r>0$,
\begin{align}
\label{eq:reformulation:u}
    \mathbf{u}_{i|j}^{(r)}=\mathbf{u}_{j|i}^{(r)}.
\end{align}
Moreover, because $\mathbf{u}_{i|j}^{(0)} = \mathbf{0}$, Eq. \eqref{eq:reformulation:u} holds for any round $r \geq 0$.

We define $\mathbf{b}_i^{(r)} \coloneqq \sum_{j\in\mathcal{N}_i} \alpha_{i|j} \mathbf{A}_{i|j} \mathbf{u}_{i|j}^{(r)}$.
From Eq. \eqref{eq:reformulation:u}, $\mathbf{b}_i^{(r)} = \sum_{j\in\mathcal{N}_i} \alpha_{i|j} \mathbf{A}_{i|j} \mathbf{u}_{j|i}^{(r)}$ holds for any round $r$.
The update formulas of the ECL are then rewritten as follows:
\begin{align*}
    \mathbf{x}_i^{(r+1)} &= \frac{1}{1 + \eta \sum_{j\in\mathcal{N}_i} \alpha_{i|j}} \left( \mathbf{x}_i^{(r)} + \eta \left( \sum_{j\in\mathcal{N}_i} \alpha_{i|j} \mathbf{x}_j^{(r)} \right) - \eta \nabla F_i(\mathbf{x}_i^{(r)} ; \xi_i^{(r)}) + \eta \mathbf{b}_i^{(r)} \right), \\
    \mathbf{b}_{i}^{(r+1)} &= \mathbf{b}_i^{(r)} + \frac{1}{2} \sum_{j\in\mathcal{N}_i} \alpha_{i|j} (\mathbf{x}_j^{(r)} - \mathbf{x}_i^{(r)} ).
\end{align*}
Defining $\mathbf{c}_i^{(r)} \coloneqq \mathbf{b}_i^{(r)} + \frac{1}{2} \sum_{j\in\mathcal{N}_i} \alpha_{i|j}(\mathbf{x}_j^{(r)} - \mathbf{x}_i^{(r)})$,
the update formulas of the ECL are rewritten as follows:
\begin{align*}
    \mathbf{x}_i^{(r+1)} &= \frac{1}{2 ( 1 + \eta \sum_{j\in\mathcal{N}_i} \alpha_{i|j})} \left( \left( 2 + \eta \sum_{j\in\mathcal{N}_i} \alpha_{i|j}  \right) \mathbf{x}_i^{(r)} + \eta \sum_{j\in\mathcal{N}_i} \alpha_{i|j} \mathbf{x}_j^{(r)} \right) \\
    &\qquad - \frac{\eta}{1 + \eta \sum_{j\in\mathcal{N}_i} \alpha_{i|j}} \left( \nabla F_i(\mathbf{x}_i^{(r)} ; \xi_i^{(r)}) - \mathbf{c}_i^{(r)} \right), \\
    \mathbf{c}_{i}^{(r+1)} &= \mathbf{c}_i^{(r)} + \frac{1}{2} \sum_{j\in\mathcal{N}_i} \alpha_{i|j} (\mathbf{x}_j^{(r+1)} - \mathbf{x}_i^{(r+1)} ).
\end{align*}
This concludes the proof.
\end{proof}
\subsection{Proof of Theorem \ref{theorem:reformulation2}}
\begin{proof}
We define $\tilde{\mathbf{x}}_i^{(r)} \in \mathbb{R}^d$ as follows:
\begin{align*}
    \tilde{\mathbf{x}}_i^{(r)} \coloneqq \sum_{j \in \mathcal{N}_i^+} W_{ij} \mathbf{x}_j^{(r)}.
\end{align*}
Then, from Lemma \ref{lemma:reformulation}, we have
\begin{align*}
    \mathbf{c}_{i}^{(r+1)} 
    &= \mathbf{c}_i^{(r)} + \frac{1}{2} \sum_{j\in\mathcal{N}_i} \alpha_{i|j} (\mathbf{x}_j^{(r+1)} - \mathbf{x}_i^{(r+1)} ) \\
    &= \mathbf{c}_i^{(r)} + \frac{1}{2} \sum_{j\in\mathcal{N}_i} \alpha_{i|j} \mathbf{x}_j^{(r+1)} - \frac{1}{2} \left( \sum_{j\in\mathcal{N}_i} \alpha_{i|j} \right) \mathbf{x}_i^{(r+1)} \\
    &= \mathbf{c}_i^{(r)} + \frac{1}{2} \sum_{j\in\mathcal{N}_i} \alpha_{i|j} \left( \tilde{\mathbf{x}}_j^{(r)} - \eta^\prime \left( \nabla F_j(\mathbf{x}_j^{(r)} ; \xi_j^{(r)}) - \mathbf{c}_j^{(r)} \right) \right) \\
    &\qquad - \frac{1}{2} \left( \sum_{j\in\mathcal{N}_i} \alpha_{i|j} \right) \left( \tilde{\mathbf{x}}_i^{(r)} - \eta^\prime \left( \nabla F_i(\mathbf{x}_i^{(r)} ; \xi_i^{(r)}) - \mathbf{c}_i^{(r)} \right) \right) \\
    &= \left( (1 - \frac{\eta^\prime \sum_{j\in\mathcal{N}_i} \alpha_{i|j}}{2})\mathbf{c}_i^{(r)} + \frac{1}{2} \sum_{j \in \mathcal{N}_i} \alpha_{i|j} \eta^\prime \mathbf{c}_j^{(r)} \right) \\
    &\qquad - \left( ( 1 - \frac{\eta^\prime \sum_{j\in\mathcal{N}_i} \alpha_{i|j}}{2}) \nabla F_i(\mathbf{x}_i^{(r)} ; \xi_i^{(r)}) + \frac{1}{2} \sum_{j\in \mathcal{N}_i} \alpha_{i|j} \eta^\prime \nabla F_j(\mathbf{x}_j^{(r)} ; \xi_j^{(r)}) \right) \\
    &\qquad+ \frac{1}{2} \left( \sum_{j \in \mathcal{N}_i} \alpha_{i|j} ( \tilde{\mathbf{x}}_j^{(r)} - \tilde{\mathbf{x}}_i^{(r)} ) \right) + \nabla F_i(\mathbf{x}_i^{(r)} ; \xi_i^{(r)}).
\end{align*}
Using Eq. \eqref{eq:definition_of_eta_prime}, we get
\begin{align*}
    \mathbf{c}_i^{(r+1)} 
    &= \sum_{j \in \mathcal{N}_i^+} W_{ij} \left( \mathbf{c}_j^{(r)} - \nabla F_j(\mathbf{x}_j^{(r)} ; \xi_j^{(r)}) \right) + \frac{1}{2} \left( \sum_{j \in \mathcal{N}_i} \alpha_{i|j} ( \tilde{\mathbf{x}}_j^{(r)} - \tilde{\mathbf{x}}_i^{(r)} ) \right) + \nabla F_i(\mathbf{x}_i^{(r)} ; \xi_i^{(r)}).
\end{align*}
This concludes the proof.
\end{proof}

\section{Proof of Theorem \ref{theorem:mixing_matrix}}
\begin{proof}
When $(i,j)\in\mathcal{E}$, we have
\begin{align*}
    W_{ij} 
    = \frac{\eta \alpha_{i|j}}{2 ( 1 + \eta \alpha)}
    = \frac{\eta \alpha_{j|i}}{2 ( 1 + \eta \alpha)}
    = W_{ji}.
\end{align*}
Therefore, $\mathbf{W}$ is symmetric.
Next, we prove that $\mathbf{W}$ is doubly stochastic.
We have
\begin{align*}
    \sum_{j=1}^{n} W_{ij} 
    &= \frac{2 + \eta \sum_{k\in\mathcal{N}_i} \alpha_{i|k}}{2 ( 1 + \eta \sum_{k\in\mathcal{N}_i} \alpha_{i|k})}
    + \sum_{j\in\mathcal{N}_i} \frac{\eta \alpha_{i|j}}{2 ( 1 + \eta \sum_{k\in\mathcal{N}_i} \alpha_{i|k})}
    = 1, \\
    \sum_{i=1}^{n} W_{ij} 
    &= \sum_{i=1}^{n} W_{ji} 
    = 1.
\end{align*}
This concludes the proof.
\end{proof}

\section{Proof of Lemma \ref{lemma:c_is_zero}}
\label{sec:proof_of_c_is_zero}
\begin{proof}
From Lemma \ref{lemma:reformulation}, we have
\begin{align*}
    \sum_{i=1}^{n} \mathbf{c}_{i}^{(r+1)} 
    &= \sum_{i=1}^{n} \mathbf{c}_i^{(r)} + \frac{1}{2} \sum_{i=1}^{n} \sum_{j\in\mathcal{N}_i} \alpha_{i|j} \mathbf{x}_j^{(r+1)} - \frac{1}{2} \sum_{i=1}^{n} \sum_{j\in\mathcal{N}_i} \alpha_{i|j} \mathbf{x}_i^{(r+1)}. 
\end{align*}
We define $\mathcal{E}^> \coloneqq \{(i,j)\in\mathcal{E}|i>j\}$ and $\mathcal{E}^< \coloneqq \{(i,j)\in\mathcal{E}|i<j\}$.
Then, we get
\begin{align*}
    \sum_{i=1}^{n} \sum_{j\in\mathcal{N}_i} \alpha_{i|j} \mathbf{x}_j^{(r+1)}
    &= \sum_{(i,j)\in\mathcal{E}^>} \alpha_{i|j} \mathbf{x}_j^{(r+1)} + \sum_{(i,j)\in\mathcal{E}^<} \alpha_{i|j} \mathbf{x}_j^{(r+1)} \\
    &= \sum_{(j,i)\in\mathcal{E}^<} \alpha_{i|j} \mathbf{x}_j^{(r+1)} + \sum_{(j,i)\in\mathcal{E}^>} \alpha_{i|j} \mathbf{x}_j^{(r+1)} \\
    &= \sum_{j=1}^{n} \sum_{i\in\mathcal{N}_j} \alpha_{i|j} \mathbf{x}_j^{(r+1)} \\
    &=\sum_{i=1}^{n} \sum_{j\in\mathcal{N}_i} \alpha_{j|i} \mathbf{x}_i^{(r+1)} \\
    &= \sum_{i=1}^{n} \sum_{j\in\mathcal{N}_i} \alpha_{i|j} \mathbf{x}_i^{(r+1)},
\end{align*}
where we use $\alpha_{i|j} = \alpha_{j|i}$ in the last equation.
Then, we get
\begin{align*}
    \sum_{i=1}^{n} \mathbf{c}_{i}^{(r+1)} 
    &= \sum_{i=1}^{n} \mathbf{c}_i^{(r)}.
\end{align*}
From the initial value of $\mathbf{c}_i$, we have $\sum_{i=1}^n \mathbf{c}_i^{(0)} = \mathbf{0}$.
Therefore, $\sum_{i=1}^{n} \mathbf{c}_{i}^{(r)}=\mathbf{0}$ for any round $r$.

From Theorems \ref{theorem:reformulation2} and \ref{theorem:mixing_matrix}, we have
\begin{align*}
    \frac{1}{n} \sum_{i=1}^{n} \mathbf{x}_i^{(r+1)} 
    &= \frac{1}{n} \sum_{i=1}^{n} \sum_{j\in\mathcal{N}_i^+}  W_{ij} \mathbf{x}_j^{(r)} 
    - \eta^\prime \frac{1}{n} \sum_{i=1}^{n} \left( \nabla F_i(\mathbf{x}_i^{(r)} ; \xi_i^{(r)}) - \mathbf{c}_i^{(r)} \right) \\
    &= \frac{1}{n} \sum_{i=1}^{n} \sum_{j=1}^{n}  W_{ij} \mathbf{x}_j^{(r)} 
    - \eta^\prime \frac{1}{n} \sum_{i=1}^{n} \nabla F_i(\mathbf{x}_i^{(r)} ; \xi_i^{(r)}) + \eta^\prime \frac{1}{n} \sum_{i=1}^{n}  \mathbf{c}_i^{(r)} \\
    &= \frac{1}{n} \sum_{j=1}^{n} \mathbf{x}_j^{(r)} \sum_{i=1}^{n}  W_{ij}  
    - \eta^\prime \frac{1}{n} \sum_{i=1}^{n} \nabla F_i(\mathbf{x}_i^{(r)} ; \xi_i^{(r)}) \\
    &= \frac{1}{n} \sum_{j=1}^{n} \mathbf{x}_j^{(r)} 
    - \eta^\prime \frac{1}{n} \sum_{i=1}^{n} \nabla F_i(\mathbf{x}_i^{(r)} ; \xi_i^{(r)}).
\end{align*}
This concludes the proof.
\end{proof}

\newpage
\section{Update Procedure of Generalized Edge-Consensus Learning}
\label{sec:algorithm_gecl}
\begin{algorithm}[h]
   \caption{Update procedure at node $i$ in the G-ECL.}
   \label{alg:reformulated_2_stochastic_ecl}
\begin{algorithmic}[1]
   \STATE {\bfseries Input:}  Set the step size $\eta^\prime>0$, the mixing matrix $\mathbf{W} \in [0, 1]^{n\times n}$, and $\{ \alpha_{i|j} \}_{ij}$ that satisfies $\alpha_{i|j}=\alpha_{j|i} \geq 0$ for all $(i,j)\in\mathcal{E}$ and $i\in[n]$. Initialize $\mathbf{x}_i^{(0)}$ with the same parameter for all $i \in [n]$. Note that $\eta^\prime$, $\mathbf{W}$, and $\{ \alpha_{i|j} \}_{ij}$ can be set independently as hyperparameters in the G-ECL.
   \FOR{$r = 0, 1, \ldots, R$}
   \FOR{$j \in \mathcal{N}_i$}
   \STATE $\textbf{Transmit}_{i\rightarrow j}(\mathbf{x}_{i}^{(r)})$.
   \STATE $\textbf{Receive}_{i\leftarrow j}(\mathbf{x}_{j}^{(r)})$.
   \ENDFOR
   \STATE $\tilde{\mathbf{x}}_i^{(r)} \leftarrow \sum_{j\in\mathcal{N}_i^+} W_{ij} \mathbf{x}_j^{(r)}$
   \STATE Sample $\xi_i^{(r)}$ and compute $\mathbf{g}_i^{(r)} \coloneqq \nabla F_i(\mathbf{x}_i^{(r)} ; \xi_i^{(r)})$.
   \STATE $\mathbf{x}_i^{(r+1)} \leftarrow \tilde{\mathbf{x}}_i^{(r)} - \eta^\prime \left( \mathbf{g}_i^{(r)} - \mathbf{c}_i^{(r)} \right)$.
   \FOR{$j \in \mathcal{N}_i$}
   \STATE $\textbf{Transmit}_{i\rightarrow j}(\tilde{\mathbf{x}}_i^{(r)}, \mathbf{c}_i^{(r)} - \mathbf{g}_{i}^{(r)})$.
   \STATE $\textbf{Receive}_{i\leftarrow j}(\tilde{\mathbf{x}}_j^{(r)}, \mathbf{c}_j^{(r)} - \mathbf{g}_{j}^{(r)})$.
   \ENDFOR
   \STATE $\mathbf{c}_i^{(r+1)} \leftarrow \sum_{j \in \mathcal{N}_i^+} W_{ij} \left( \mathbf{c}_j^{(r)} - \mathbf{g}_j^{(r)} \right) + \mathbf{g}_i^{(r)} + \sum_{j \in \mathcal{N}_i} \frac{\alpha_{i|j}}{2} ( \tilde{\mathbf{x}}_j^{(r)} - \tilde{\mathbf{x}}_i^{(r)} )$.
   \ENDFOR
\end{algorithmic}
\end{algorithm}
Alg. \ref{alg:reformulated_2_stochastic_ecl} shows the pseudo-code of the G-ECL.
Note that from Theorem \ref{theorem:reformulation2}, 
the update formulas of Alg. \ref{alg:stochastic_ecl} and Alg. \ref{alg:reformulated_2_stochastic_ecl} are equivalent when the hyperparameter $\theta$ is $\frac{1}{2}$, and $\eta^\prime$, $\mathbf{W}$, and $\{ \alpha_{i|j} \}_{ij}$ are set as in Eq. \eqref{eq:definition_of_eta_prime}.

\newpage
\section{Proof of Theorem \ref{theoram:convergence_rate}}
\label{sec:proof_of_main}

\subsection{G-ECL in Matrix Notation}
We define $\mathbf{X}^{(r)}, \mathbf{C}^{(r)}$, $\bar{\mathbf{X}}^{(r)} \in \mathbb{R}^{d\times n}$, $\nabla F(\mathbf{X}^{(r)} ; \xi^{(r)})$,
and $\nabla f(\mathbf{X}^{(r)})$ as follows:
\begin{gather*}
    \mathbf{X}^{(r)} \coloneqq\left[ \mathbf{x}_1^{(r)}, \cdots, \mathbf{x}_n^{(r)} \right], \;\;
    \mathbf{C}^{(r)} \coloneqq\left[ \mathbf{c}_1^{(r)}, \cdots, \mathbf{c}_n^{(r)} \right], \;\;
    \bar{\mathbf{X}}^{(r)} \coloneqq \left[ \bar{\mathbf{x}}^{(r)}, \cdots, \bar{\mathbf{x}}^{(r)} \right], \\
    \nabla F(\mathbf{X}^{(r)} ; \xi^{(r)}) \coloneqq \left[ \nabla F_1(\mathbf{x}_1^{(r)} ; \xi_1^{(r)}), \cdots, \nabla F_n(\mathbf{x}_n^{(r)} ; \xi_n^{(r)}) \right],  \\
    \nabla f(\mathbf{X}^{(r)}) \coloneqq \left[ \nabla f_1(\mathbf{x}_1^{(r)}), \cdots, \nabla f_n(\mathbf{x}_n^{(r)}) \right],
\end{gather*}
where $\bar{\mathbf{x}}^{(r)} \coloneqq \frac{1}{n} \sum_{i=1}^n \mathbf{x}_i^{(r)}$.
We define $\mathbf{E} \coloneqq \text{diag}(\sum_{k\in\mathcal{N}_1} \alpha_{k|1}, \cdots, \sum_{k\in\mathcal{N}_n} \alpha_{k|n}) \in \mathbb{R}^{n\times n}$ and $\mathbf{D} \in \mathbb{R}^{n\times n}$ whose $(i,j)$-element is
\begin{align}
\label{eq:definition_of_D}
    D_{ij} =
    \begin{dcases}
    \alpha_{i|j} & \text{if} \;\; (i,j) \in \mathcal{E} \\
    0 & \text{otherwise}
    \end{dcases}.
\end{align}
Note that because we assume that $\alpha_{i|j}=\alpha_{j|i}$ for all $(i,j)\in\mathcal{E}$,
$\mathbf{D}$ is symmetric.
Then, the update formulas Eqs. (\ref{eq:reformulated_2_stochastic_ecl_1}-\ref{eq:reformulated_2_stochastic_ecl_3}) can be rewritten as follows:
\begin{align}
    \label{eq:matrix_stochastic_ecl_1}
    \mathbf{X}^{(r+1)} &= \mathbf{X}^{(r)} \mathbf{W} - \eta^\prime (\nabla F(\mathbf{X}^{(r)} ; \xi^{(r)}) - \mathbf{C}^{(r)}), \\
    \label{eq:matrix_stochastic_ecl_2}
    \mathbf{C}^{(r+1)} &= (\mathbf{C}^{(r)} - \nabla F(\mathbf{X}^{(r)} ; \xi^{(r)})) \mathbf{W} + \frac{1}{2} \mathbf{X}^{(r)} \mathbf{W} (\mathbf{D} - \mathbf{E}) + \nabla F(\mathbf{X}^{(r)} ; \xi^{(r)}),
\end{align}

\subsection{Preliminary and Technical Lemma}
\begin{definition}[$\tau$-Slow Increasing \citep{stich2020error}]
The sequence $\{ a_r \}_{r \geq 0}$ of a positive value is called $\tau$-slow increasing if it holds that for any $r\geq 0$,
\begin{align*}
    a_r \leq a_{r+1} \leq \left( 1 + \frac{1}{2 \tau}\right) a_r. 
\end{align*}
\end{definition}

\begin{lemma}
For any $\mathbf{x}, \mathbf{y} \in \mathbb{R}^d, \gamma > 0$, it holds that
\begin{align}
    \label{eq:relaxed_triangle}
    \| \mathbf{x} + \mathbf{y} \|^2 \leq (1 + \gamma) \| \mathbf{x} \|^2 + (1 + \gamma^{-1}) \| \mathbf{y} \|^2.
\end{align}
\begin{lemma}
For any $\mathbf{x}_1, \cdots, \mathbf{x}_n \in \mathbb{R}^d$, it holds that
\begin{align}
    \label{eq:sum_of_n_vectors}
    \left\| \sum_{i=1}^{n} \mathbf{a}_i \right\|^2 \leq n \sum_{i=1}^{n} \| \mathbf{a}_i \|^2.
\end{align}
\end{lemma}
\end{lemma}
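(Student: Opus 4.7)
The plan is to prove the two inequalities separately, each by a short, direct algebraic argument; no machinery from the rest of the paper is required.

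For the relaxed triangle inequality, I would begin by expanding the squared norm using the inner product:
\begin{equation*}
\|\mathbf{x}+\mathbf{y}\|^2 = \|\mathbf{x}\|^2 + 2\langle \mathbf{x},\mathbf{y}\rangle + \|\mathbf{y}\|^2.
\end{equation*}
The key step is to bound the cross term $2\langle \mathbf{x},\mathbf{y}\rangle$ by Young's inequality with parameter $\gamma$. Concretely, for any $\gamma>0$, the nonnegativity of $\|\sqrt{\gamma}\,\mathbf{x} - \gamma^{-1/2}\mathbf{y}\|^2$ yields $2\langle \mathbf{x},\mathbf{y}\rangle \leq \gamma\|\mathbf{x}\|^2 + \gamma^{-1}\|\mathbf{y}\|^2$. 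Substituting this into the expansion collects the $\|\mathbf{x}\|^2$ and $\|\mathbf{y}\|^2$ terms with exactly the factors $(1+\gamma)$ and $(1+\gamma^{-1})$ claimed in the lemma.

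For the sum-of-$n$-vectors bound, I would apply Cauchy--Schwarz (equivalently, Jensen's inequality for the convex function $t\mapsto t^2$). Writing $\sum_{i=1}^n \mathbf{a}_i = \sum_{i=1}^n 1\cdot \mathbf{a}_i$ and taking the norm, Cauchy--Schwarz for the inner product on $\mathbb{R}^n$ (applied coordinate-wise or after passing to $\|\cdot\|^2$) gives
\begin{equation*}
\Bigl\|\sum_{i=1}^n \mathbf{a}_i\Bigr\|^2 \leq \Bigl(\sum_{i=1}^n 1^2\Bigr)\Bigl(\sum_{i=1}^n \|\mathbf{a}_i\|^2\Bigr) = n\sum_{i=1}^n \|\mathbf{a}_i\|^2.
\end{equation*}
Equivalently, Jensen's inequality applied to $\|\tfrac{1}{n}\sum_i \mathbf{a}_i\|^2 \leq \tfrac{1}{n}\sum_i \|\mathbf{a}_i\|^2$ and multiplication by $n^2$ yields the same bound. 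As a backup, one could also derive this by induction on $n$: the base case $n=1$ is trivial, and the inductive step follows by applying the relaxed triangle inequality from part one with the choice $\gamma = 1/(n-1)$, which produces the factors $n$ and $n/(n-1)$ needed to close the induction.

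Neither step presents a real obstacle; both are standard. The only thing to be careful about is making sure the Young's inequality parameter is applied in the correct direction so that the coefficients of $\|\mathbf{x}\|^2$ and $\|\mathbf{y}\|^2$ match the asymmetric form stated (one gets $1+\gamma$, the other $1+\gamma^{-1}$). Both results are scale-invariant in the obvious sense and hold with equality in well-known cases (collinearity for Cauchy--Schwarz, and $\mathbf{y}=\gamma\mathbf{x}$ for Young's), which provides a useful sanity check on the proof.
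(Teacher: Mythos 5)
Your proof is correct, and both arguments (Young's inequality applied to the cross term $2\langle\mathbf{x},\mathbf{y}\rangle$, and Cauchy--Schwarz/Jensen for the sum of $n$ vectors) are the standard ones; the paper states these as technical lemmas without supplying any proof, so there is nothing to diverge from. Your version fills in exactly the omitted routine details, and the equality cases you mention confirm the constants are tight.
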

\begin{lemma}
For any $\mathbf{x}, \mathbf{y} \in \mathbb{R}^d$ and $\gamma > 0$, it holds that
\begin{align}
\label{eq:inner_prod}
    2 \langle \mathbf{x}, \mathbf{y} \rangle \leq \gamma \| \mathbf{x} \|^2 + \gamma^{-1} \| \mathbf{y} \|^2.
\end{align}
\end{lemma}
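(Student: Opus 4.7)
The plan is to derive the inequality from the non-negativity of a conveniently chosen squared norm, which is the standard route to Young-type bounds in inner-product spaces. Concretely, I would start from the trivial observation that $\| \sqrt{\gamma}\,\mathbf{x} - \gamma^{-1/2}\,\mathbf{y} \|^2 \geq 0$, which is well defined because $\gamma > 0$ ensures $\sqrt{\gamma}$ and $\gamma^{-1/2}$ exist as positive reals. Expanding the squared norm using $\| \mathbf{a} - \mathbf{b} \|^2 = \| \mathbf{a} \|^2 - 2 \langle \mathbf{a}, \mathbf{b} \rangle + \| \mathbf{b} \|^2$ and pulling the scalars $\sqrt{\gamma}$ and $\gamma^{-1/2}$ out of the inner product by bilinearity (noting $\sqrt{\gamma}\cdot\gamma^{-1/2}=1$) yields $\gamma \| \mathbf{x} \|^2 - 2 \langle \mathbf{x}, \mathbf{y} \rangle + \gamma^{-1} \| \mathbf{y} \|^2 \geq 0$. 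Adding $2\langle \mathbf{x}, \mathbf{y} \rangle$ to both sides gives the claim.

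This is essentially Young's inequality in its inner-product formulation, and is entirely routine; there is no genuine obstacle to address. The only subtlety worth flagging in the write-up is that $\gamma > 0$ is needed for $\gamma^{-1/2}$ to be real, and that the direction of the final inequality is preserved because the manipulation only involves adding the same quantity $2 \langle \mathbf{x}, \mathbf{y} \rangle$ to both sides. As a sanity check, equality holds precisely when $\sqrt{\gamma}\,\mathbf{x} = \gamma^{-1/2}\,\mathbf{y}$, i.e., $\mathbf{y} = \gamma \mathbf{x}$, which is consistent with the tight case of AM--GM applied to $\gamma \| \mathbf{x} \|^2$ and $\gamma^{-1} \| \mathbf{y} \|^2$ (whose product equals $\| \mathbf{x} \|^2 \| \mathbf{y} \|^2$), recovering Cauchy--Schwarz as the $\gamma = \| \mathbf{y} \| / \| \mathbf{x} \|$ specialization.
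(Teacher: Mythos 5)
Your proof is correct. The paper states this lemma without proof (it is a standard Young-type inequality used as a technical tool), and your argument---expanding $\| \sqrt{\gamma}\,\mathbf{x} - \gamma^{-1/2}\,\mathbf{y} \|^2 \geq 0$---is exactly the canonical derivation one would supply.
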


\begin{lemma}
Suppose that Assumption \ref{assumption:smoothness} holds
and $f_i$ is convex.
Then, it holds that for any $\mathbf{x}, \mathbf{y} \in \mathbb{R}^d$,
\begin{align}
\label{eq:smooth_convex}
    \| \nabla f_i(\mathbf{x}) - \nabla f_i (\mathbf{y}) \|^2
    \leq 2L (f_i(\mathbf{x}) - f_i(\mathbf{y}) - \langle \mathbf{x} - \mathbf{y}, \nabla f_i(\mathbf{y}) \rangle ).
\end{align}
\end{lemma}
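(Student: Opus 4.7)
The plan is to introduce the auxiliary function $g : \mathbb{R}^d \to \mathbb{R}$ defined by $g(\mathbf{z}) \coloneqq f_i(\mathbf{z}) - \langle \nabla f_i(\mathbf{y}), \mathbf{z} \rangle$, which rewrites the right-hand side of the target inequality as a simple function gap $g(\mathbf{x}) - g(\mathbf{y})$. Since $f_i$ is convex and the subtracted term is linear, $g$ is convex; since $\nabla g(\mathbf{z}) = \nabla f_i(\mathbf{z}) - \nabla f_i(\mathbf{y})$, Assumption \ref{assumption:smoothness} implies that $g$ is also $L$-smooth. The crucial observation is that $\nabla g(\mathbf{y}) = \mathbf{0}$, so by convexity $\mathbf{y}$ is a global minimizer of $g$.

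Next, I would invoke the descent lemma for $L$-smooth functions at $\mathbf{x}$, applied to a free probe point $\mathbf{z}$:
\begin{align*}
g(\mathbf{z}) \leq g(\mathbf{x}) + \langle \nabla g(\mathbf{x}), \mathbf{z} - \mathbf{x} \rangle + \frac{L}{2} \| \mathbf{z} - \mathbf{x} \|^2.
\end{align*}
Minimizing the right-hand side over $\mathbf{z}$ (taking $\mathbf{z} = \mathbf{x} - L^{-1} \nabla g(\mathbf{x})$) yields
\begin{align*}
\min_{\mathbf{z}} g(\mathbf{z}) \leq g(\mathbf{x}) - \frac{1}{2L} \| \nabla g(\mathbf{x}) \|^2.
\end{align*}
Because $\mathbf{y}$ is a global minimizer of $g$, the left-hand side equals $g(\mathbf{y})$, which rearranges to $\| \nabla g(\mathbf{x}) \|^2 \leq 2L (g(\mathbf{x}) - g(\mathbf{y}))$.

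The final step is purely substitution: unpacking the definition of $g$ gives $\nabla g(\mathbf{x}) = \nabla f_i(\mathbf{x}) - \nabla f_i(\mathbf{y})$ and $g(\mathbf{x}) - g(\mathbf{y}) = f_i(\mathbf{x}) - f_i(\mathbf{y}) - \langle \nabla f_i(\mathbf{y}), \mathbf{x} - \mathbf{y} \rangle$, which recovers the stated inequality. There is no real obstacle here: the only semi-technical piece is justifying the descent lemma from the bare Lipschitz-gradient hypothesis of Assumption \ref{assumption:smoothness}, which is a standard consequence of the fundamental theorem of calculus along the segment joining $\mathbf{x}$ and $\mathbf{z}$ together with Cauchy-Schwarz. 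Convexity of $f_i$ enters solely to promote the stationary point $\mathbf{y}$ of $g$ to a global minimizer; without it, only a local argument would be available.
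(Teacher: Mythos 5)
Your proof is correct and complete. Note that the paper itself states this as a bare technical lemma with no proof at all (it is the classical inequality for $L$-smooth convex functions, e.g.\ Nesterov's characterization), so there is no in-paper argument to compare against; your derivation via the auxiliary function $g(\mathbf{z}) = f_i(\mathbf{z}) - \langle \nabla f_i(\mathbf{y}), \mathbf{z} \rangle$, the observation that $\mathbf{y}$ globally minimizes $g$, and the descent lemma $\min_{\mathbf{z}} g(\mathbf{z}) \leq g(\mathbf{x}) - \frac{1}{2L}\|\nabla g(\mathbf{x})\|^2$ is exactly the standard textbook route and fills the gap the paper leaves implicit.
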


\subsection{Convergence Analysis for Convex Cases}
\label{sec:convergence_analysis_for_convex}
\subsubsection{Additional Notation}
In Sec. \ref{sec:convergence_analysis_for_convex}, we define $\Xi^{(r)}, \mathcal{E}^{(r)}$, $b$, and $b^\prime$ as follows to simplify the notation:
\begin{gather*}
    \Xi^{(r)} \coloneqq \frac{1}{n} \mathbb{E} \sum_{i=1}^{n} \left\| \mathbf{x}_i^{(r)} - \bar{\mathbf{x}}^{(r)} \right\|^2,
    \mathcal{E}^{(r)} \coloneqq \frac{1}{n} \mathbb{E} \left\| \nabla f(\mathbf{X}^\star) - \mathbf{C}^{(r)} \right\|^2_F, \\
    b \coloneqq \left\| \frac{1}{2} (\mathbf{D} - \mathbf{E}) \right\|^2_F, 
    b^\prime \coloneqq \left\| \mathbf{I} - \mathbf{W} \right\|^2_F.
\end{gather*}

\subsubsection{Convergence Analysis}
\begin{lemma}[Descent Lemma for Convex Cases]
\label{lemma:descent_convex}
Suppose that Assumptions \ref{assumption:mixing_matrix}, \ref{assumption:smoothness}, \ref{assumption:stochastic_gradient} and \ref{assumption:mu_convexity} hold.
If ${\eta^\prime} \leq \frac{1}{12 L}$, we have:
\begin{align*}
    &\mathbb{E}_{r+1} \left\| \bar{\mathbf{x}}^{(r+1)} - \mathbf{x}^\star \right\|^2 \\
    &\qquad \leq \left(1 - \frac{\eta^\prime \mu}{2} \right) \left\| \bar{\mathbf{x}}^{(r)} - \mathbf{x}^\star \right\|^2
    + \frac{{\eta^\prime}^2 \sigma^2}{n} - \eta^\prime (f(\bar{\mathbf{x}}^{(r)}) - f (\mathbf{x}^\star)) + \frac{3L \eta^\prime}{n} \sum_{i=1}^{n} \left\| \mathbf{x}_i^{(r)} - \bar{\mathbf{x}}^{(r)} \right\|^2.
\end{align*}
\end{lemma}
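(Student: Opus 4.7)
\textbf{Proof proposal for Lemma \ref{lemma:descent_convex}.}
The plan is to exploit the ``hidden SGD'' identity already provided by Lemma \ref{lemma:c_is_zero}, namely
$\bar{\mathbf{x}}^{(r+1)} = \bar{\mathbf{x}}^{(r)} - \tfrac{\eta^\prime}{n}\sum_{i=1}^n \nabla F_i(\mathbf{x}_i^{(r)};\xi_i^{(r)})$,
and run a standard convex-SGD descent argument on the \emph{average} iterate, treating the consensus error $\|\mathbf{x}_i^{(r)}-\bar{\mathbf{x}}^{(r)}\|^2$ as a perturbation. First I would expand $\|\bar{\mathbf{x}}^{(r+1)}-\mathbf{x}^\star\|^2$ by the substitution above, take conditional expectation $\mathbb{E}_{r+1}$ over $\xi^{(r)}$, and use independence of the $\xi_i^{(r)}$ together with Assumption \ref{assumption:stochastic_gradient} to separate the noise into the variance contribution $\tfrac{{\eta^\prime}^2\sigma^2}{n}$. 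What remains are the cross term $-2\eta^\prime\bigl\langle \bar{\mathbf{x}}^{(r)}-\mathbf{x}^\star, \tfrac{1}{n}\sum_i\nabla f_i(\mathbf{x}_i^{(r)})\bigr\rangle$ and the squared mean-gradient term ${\eta^\prime}^2\bigl\|\tfrac{1}{n}\sum_i\nabla f_i(\mathbf{x}_i^{(r)})\bigr\|^2$.

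For the cross term I would insert the decomposition $\bar{\mathbf{x}}^{(r)}-\mathbf{x}^\star = (\bar{\mathbf{x}}^{(r)}-\mathbf{x}_i^{(r)}) + (\mathbf{x}_i^{(r)}-\mathbf{x}^\star)$ inside each summand. The piece involving $\mathbf{x}_i^{(r)}-\mathbf{x}^\star$ is bounded below by $\mu$-convexity (Assumption \ref{assumption:mu_convexity}) by $f_i(\mathbf{x}_i^{(r)})-f_i(\mathbf{x}^\star) + \tfrac{\mu}{2}\|\mathbf{x}_i^{(r)}-\mathbf{x}^\star\|^2$, while the piece involving $\bar{\mathbf{x}}^{(r)}-\mathbf{x}_i^{(r)}$ is bounded below by the descent lemma form of $L$-smoothness (Assumption \ref{assumption:smoothness}) by $f_i(\bar{\mathbf{x}}^{(r)})-f_i(\mathbf{x}_i^{(r)}) - \tfrac{L}{2}\|\bar{\mathbf{x}}^{(r)}-\mathbf{x}_i^{(r)}\|^2$. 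The $f_i(\mathbf{x}_i^{(r)})$ contributions cancel, and after averaging over $i$ and applying Jensen's inequality $\tfrac{1}{n}\sum_i\|\mathbf{x}_i^{(r)}-\mathbf{x}^\star\|^2 \geq \|\bar{\mathbf{x}}^{(r)}-\mathbf{x}^\star\|^2$, the cross term contributes
$-2\eta^\prime(f(\bar{\mathbf{x}}^{(r)})-f(\mathbf{x}^\star)) - \mu\eta^\prime\|\bar{\mathbf{x}}^{(r)}-\mathbf{x}^\star\|^2 + \tfrac{\eta^\prime L}{n}\sum_i\|\bar{\mathbf{x}}^{(r)}-\mathbf{x}_i^{(r)}\|^2$.

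For the squared mean-gradient term I would write $\nabla f_i(\mathbf{x}_i^{(r)}) = (\nabla f_i(\mathbf{x}_i^{(r)})-\nabla f_i(\bar{\mathbf{x}}^{(r)})) + \nabla f_i(\bar{\mathbf{x}}^{(r)})$ and apply $\|a+b\|^2 \leq 2\|a\|^2+2\|b\|^2$ followed by Eq. \eqref{eq:sum_of_n_vectors}. Smoothness (Assumption \ref{assumption:smoothness}) bounds the first group by $\tfrac{2L^2}{n}\sum_i\|\mathbf{x}_i^{(r)}-\bar{\mathbf{x}}^{(r)}\|^2$, and since $\nabla f(\mathbf{x}^\star)=\mathbf{0}$ the bound Eq. \eqref{eq:smooth_convex} applied to $f$ gives $\|\nabla f(\bar{\mathbf{x}}^{(r)})\|^2 \leq 2L(f(\bar{\mathbf{x}}^{(r)})-f(\mathbf{x}^\star))$. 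Thus the squared mean-gradient term contributes at most ${\eta^\prime}^2\tfrac{2L^2}{n}\sum_i\|\mathbf{x}_i^{(r)}-\bar{\mathbf{x}}^{(r)}\|^2 + 4L{\eta^\prime}^2(f(\bar{\mathbf{x}}^{(r)})-f(\mathbf{x}^\star))$.

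Summing the two contributions with the variance term and the initial $\|\bar{\mathbf{x}}^{(r)}-\mathbf{x}^\star\|^2$, the coefficient of $f(\bar{\mathbf{x}}^{(r)})-f(\mathbf{x}^\star)$ becomes $-2\eta^\prime+4L{\eta^\prime}^2 = -2\eta^\prime(1-2L\eta^\prime)$, which under $\eta^\prime \leq \tfrac{1}{12L}$ is bounded above by $-\eta^\prime$; the coefficient of $\tfrac{1}{n}\sum_i\|\mathbf{x}_i^{(r)}-\bar{\mathbf{x}}^{(r)}\|^2$ becomes $\eta^\prime L + 2L^2{\eta^\prime}^2 = \eta^\prime L(1+2L\eta^\prime) \leq 3L\eta^\prime$; and the coefficient of $\|\bar{\mathbf{x}}^{(r)}-\mathbf{x}^\star\|^2$ is $1-\mu\eta^\prime \leq 1-\tfrac{\mu\eta^\prime}{2}$. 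Putting these together yields exactly the claimed inequality. The main obstacle will be the careful bookkeeping in step (i): the smoothness upper bound and the convexity lower bound must be invoked in the right order on the right summands so that the undesired $f_i(\mathbf{x}_i^{(r)})$ terms cancel and only $f(\bar{\mathbf{x}}^{(r)})-f(\mathbf{x}^\star)$ plus controllable consensus errors survive; and the step-size constraint $\eta^\prime \leq \tfrac{1}{12L}$ has to be tight enough that the positive $4L{\eta^\prime}^2$ feedback from the squared mean-gradient term does not overwhelm the $-2\eta^\prime$ descent produced by the cross term.
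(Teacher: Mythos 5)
Your proposal is correct and is essentially the argument the paper relies on: the paper's proof of this lemma is a one-line citation to Lemma 8 of \citet{koloskova2020unified}, and your derivation (expanding the average-iterate recursion from Lemma \ref{lemma:c_is_zero}, splitting the cross term via $\mu$-convexity and the smoothness descent inequality, and bounding the squared mean gradient with Eq. \eqref{eq:smooth_convex}) reproduces exactly that standard argument with the correct constants under $\eta^\prime \leq \frac{1}{12L}$. The only implicit ingredient worth stating explicitly is the independence of the $\xi_i^{(r)}$ across nodes, which you do note and which is needed for the $\frac{{\eta^\prime}^2\sigma^2}{n}$ variance term.
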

\begin{proof}
The statement follows from Lemma 8 in \citep{koloskova2020unified}.
\end{proof}

\begin{lemma}[Recursion for Consensus Distance]
\label{lemma:consensus_convex}
Suppose that Assumptions \ref{assumption:mixing_matrix}, \ref{assumption:smoothness}, \ref{assumption:stochastic_gradient} and \ref{assumption:mu_convexity} hold,
and $\{ \alpha_{i|j} \}_{ij}$ is set such that $\alpha_{i|j} = \alpha_{j|i} \geq 0$ for all $(i,j) \in \mathcal{E}$.
Then, it holds that
\begin{align*}
    \Xi^{(r)} 
    \leq (1 - \frac{p}{2}) \Xi^{(r-1)}
    + \frac{9 L^2 {\eta^\prime}^2}{p} \Xi^{(r-1)}
    + \frac{18 L}{p} {\eta^\prime}^2 (\mathbb{E}f(\bar{\mathbf{x}}^{(r-1)}) - f(\mathbf{x}^\star))
    + \frac{9}{p} {\eta^\prime}^2 \mathcal{E}^{(r-1)}
    + {\eta^\prime}^2 \sigma^2.
\end{align*}
\end{lemma}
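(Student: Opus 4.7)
The plan is to follow the standard consensus-distance recursion template of Koloskova et al. \cite{koloskova2020unified}, but isolating the correction term $\mathbf{C}^{(r)}$ in such a way that no heterogeneity assumption ever enters. The first move is to recast the iteration in matrix form. Writing $\mathbf{G}^{(r-1)} \coloneqq \nabla F(\mathbf{X}^{(r-1)};\xi^{(r-1)})$ and $\mathbf{J} \coloneqq \tfrac{1}{n}\mathbf{1}\mathbf{1}^\top$, I would use Eq. \eqref{eq:matrix_stochastic_ecl_1} together with $\mathbf{W}\mathbf{J} = \mathbf{J}$ (from double stochasticity) and $\mathbf{C}^{(r-1)}\mathbf{1} = \mathbf{0}$ (from Lemma \ref{lemma:c_is_zero}) to get $\bar{\mathbf{X}}^{(r)} = \bar{\mathbf{X}}^{(r-1)} - \eta^\prime \mathbf{G}^{(r-1)} \mathbf{J}$, and subtracting from the update produces
\begin{equation*}
\mathbf{X}^{(r)} - \bar{\mathbf{X}}^{(r)} = (\mathbf{X}^{(r-1)} - \bar{\mathbf{X}}^{(r-1)})\mathbf{W} - \eta^\prime\bigl(\mathbf{G}^{(r-1)} - \mathbf{C}^{(r-1)}\bigr)(\mathbf{I} - \mathbf{J}),
\end{equation*}
where I again used $\mathbf{C}^{(r-1)}\mathbf{J} = \mathbf{0}$ to factor $(\mathbf{I} - \mathbf{J})$ cleanly.

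Next, I would split $\mathbf{G}^{(r-1)} - \mathbf{C}^{(r-1)} = (\mathbf{G}^{(r-1)} - \nabla f(\mathbf{X}^{(r-1)})) + (\nabla f(\mathbf{X}^{(r-1)}) - \mathbf{C}^{(r-1)})$. Taking expectation and using unbiasedness of the stochastic gradient kills the cross term, peeling off a pure noise contribution bounded by $\eta^{\prime 2}\,n\sigma^2$ via Assumption \ref{assumption:stochastic_gradient} together with $\|A(\mathbf{I}-\mathbf{J})\|_F \leq \|A\|_F$; after dividing by $n$ this yields exactly the $\eta^{\prime 2}\sigma^2$ term of the statement. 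To the remaining deterministic piece I would apply Young's inequality (Eq. \eqref{eq:relaxed_triangle}) with $\gamma$ chosen so that $(1+\gamma)(1-p) \leq 1 - p/2$, and then invoke Assumption \ref{assumption:mixing_matrix} on the first factor to obtain a $(1-p/2)\Xi^{(r-1)}$ contraction, while the complementary $(1+1/\gamma) = \mathcal{O}(1/p)$ factor multiplies $\eta^{\prime 2}\|\nabla f(\mathbf{X}^{(r-1)}) - \mathbf{C}^{(r-1)}\|_F^2$. One more Young's split of the latter as $(\nabla f(\mathbf{X}^{(r-1)}) - \nabla f(\mathbf{X}^\star)) + (\nabla f(\mathbf{X}^\star) - \mathbf{C}^{(r-1)})$ produces the $\frac{9}{p}\eta^{\prime 2}\mathcal{E}^{(r-1)}$ term.

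The key quantitative step—the one that ties the argument together and, crucially, bypasses any heterogeneity term—is bounding $\|\nabla f(\mathbf{X}^{(r-1)}) - \nabla f(\mathbf{X}^\star)\|_F^2$ by a combination of $\Xi^{(r-1)}$ and the suboptimality gap. For each $i$ I would insert the mean via $\nabla f_i(\mathbf{x}_i^{(r-1)}) - \nabla f_i(\mathbf{x}^\star) = \bigl(\nabla f_i(\mathbf{x}_i^{(r-1)}) - \nabla f_i(\bar{\mathbf{x}}^{(r-1)})\bigr) + \bigl(\nabla f_i(\bar{\mathbf{x}}^{(r-1)}) - \nabla f_i(\mathbf{x}^\star)\bigr)$, then control the first piece by $L^2\|\mathbf{x}_i^{(r-1)} - \bar{\mathbf{x}}^{(r-1)}\|^2$ via $L$-smoothness (Assumption \ref{assumption:smoothness}) and the second by the convex–smooth inequality Eq. \eqref{eq:smooth_convex} at the points $(\bar{\mathbf{x}}^{(r-1)}, \mathbf{x}^\star)$. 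Summing over $i$, the inner-product term $\sum_i \langle \bar{\mathbf{x}}^{(r-1)} - \mathbf{x}^\star, \nabla f_i(\mathbf{x}^\star)\rangle$ vanishes because $\sum_i \nabla f_i(\mathbf{x}^\star) = n\nabla f(\mathbf{x}^\star) = 0$, leaving precisely $n(f(\bar{\mathbf{x}}^{(r-1)}) - f(\mathbf{x}^\star))$. This yields a bound of the form $\|\nabla f(\mathbf{X}^{(r-1)}) - \nabla f(\mathbf{X}^\star)\|_F^2 \leq 2L^2 n\Xi^{(r-1)} + 4Ln(f(\bar{\mathbf{x}}^{(r-1)}) - f(\mathbf{x}^\star))$, exactly the shape needed for the $\frac{9L^2}{p}\eta^{\prime 2}\Xi^{(r-1)}$ and $\frac{18L}{p}\eta^{\prime 2}(\mathbb{E}f(\bar{\mathbf{x}}^{(r-1)}) - f(\mathbf{x}^\star))$ terms of the claim.

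Collecting these bounds, dividing by $n$, and consolidating the Young's constants produces the stated inequality. I expect the main obstacle to be constant bookkeeping: landing on the sharp numerical constants $9,18,9$ (rather than coarser absolute constants) requires (i) peeling the noise off \emph{before} applying Young's inequality so that the factor $1$ on $\eta^{\prime 2}\sigma^2$ is preserved, and (ii) choosing the two Young's parameters compatibly so that all $1/p$ factors align. The structural reasoning itself—most notably the cancellation $\sum_i \nabla f_i(\mathbf{x}^\star) = 0$ which is what makes the final bound independent of $\zeta$—is routine once the matrix-form decomposition is in place.
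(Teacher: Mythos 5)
Your proposal is correct and follows essentially the same route as the paper's proof: peel off the gradient noise by unbiasedness, apply Young's inequality with $\gamma = \Theta(p)$ together with Assumption \ref{assumption:mixing_matrix} to get the $(1-p/2)$ contraction, and bound $\|\nabla f(\mathbf{X}^{(r-1)}) - \mathbf{C}^{(r-1)}\|_F^2$ by splitting through $\nabla f(\bar{\mathbf{X}}^{(r-1)})$ and $\nabla f(\mathbf{X}^\star)$, using Eq.~\eqref{eq:smooth_convex} and $\sum_i \nabla f_i(\mathbf{x}^\star)=\mathbf{0}$ to produce the suboptimality term. The only cosmetic differences are that the paper centers at $\bar{\mathbf{X}}^{(r-1)}$ via the variance inequality instead of your exact $(\mathbf{I}-\mathbf{J})$ identity, and uses a single three-way split with constant $3$ where you use a nested two-stage Young's split; both yield the identical constants $9$, $18$, $9$.
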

\begin{proof}
By using $\sum_{i=1}^{n} \| \mathbf{a}_i - \bar{\mathbf{a}} \|^2 \leq \sum_{i=1}^{n} \| \mathbf{a}_i \|^2$ for any $\mathbf{a}_1, \cdots, \mathbf{a}_n \in \mathbb{R}^d$, we have
\begin{align*}
    n \Xi^{(r)} &= \mathbb{E} \left\| \mathbf{X}^{(r)} - \bar{\mathbf{X}}^{(r)} \right\|^2_F \\
    &= \mathbb{E} \left\| \mathbf{X}^{(r)} - \bar{\mathbf{X}}^{(r-1)} -( \bar{\mathbf{X}}^{(r)} - \bar{\mathbf{X}}^{(r-1)} ) \right\|^2_F
    \leq \mathbb{E} \left\| \mathbf{X}^{(r)} - \bar{\mathbf{X}}^{(r-1)} \right\|^2_F.
\end{align*}
Then, we get
\begin{align*}
    &\mathbb{E}_{r+1} \left\| \mathbf{X}^{(r)} - \bar{\mathbf{X}}^{(r-1)} \right\|^2_F  \\
    &\leq \mathbb{E}_{r+1} \left\| \mathbf{X}^{(r-1)} \mathbf{W} - \eta^\prime (\nabla F(\mathbf{X}^{(r-1)} ; \xi^{(r-1)}) - \mathbf{C}^{(r-1)}) - \bar{\mathbf{X}}^{(r-1)} \right\|^2_F \\
    &\leq \left\| \mathbf{X}^{(r-1)} \mathbf{W} - \eta^\prime (\nabla f(\mathbf{X}^{(r-1)}) - \mathbf{C}^{(r-1)}) - \bar{\mathbf{X}}^{(r-1)} \right\|^2_F \\
    &\qquad + {\eta^\prime}^2 \mathbb{E}_{r+1} \left\| \nabla f(\mathbf{X}^{(r-1)}) - \nabla F(\mathbf{X}^{(r-1)} ; \xi^{(r-1)}) \right\|^2_F \\
    &\stackrel{(\ref{eq:assumption:stochastic_gradient}), (\ref{eq:relaxed_triangle})}{\leq} (1 + \gamma) \left\| \mathbf{X}^{(r-1)} \mathbf{W} - \bar{\mathbf{X}}^{(r-1)} \right\|^2_F  
    + (1 + \gamma^{-1}) {\eta^\prime}^2 \left\| \nabla f(\mathbf{X}^{(r-1)}) - \mathbf{C}^{(r-1)} \right\|^2_F
    + {\eta^\prime}^2 n \sigma^2 \\
    &\stackrel{(\ref{eq:assumption:mixing_matrix})}{\leq} (1 + \gamma) (1 - p) \left\| \mathbf{X}^{(r-1)} - \bar{\mathbf{X}}^{(r-1)} \right\|^2_F  
    + (1 + \gamma^{-1}) {\eta^\prime}^2 \left\| \nabla f(\mathbf{X}^{(r-1)}) - \mathbf{C}^{(r-1)} \right\|^2_F
    + {\eta^\prime}^2 n \sigma^2.
\end{align*}
By substituting $\gamma=\frac{p}{2}$, we get
\begin{align*}
    &\mathbb{E}_{r+1} \left\| \mathbf{X}^{(r)} - \bar{\mathbf{X}}^{(r-1)} \right\|^2_F \\
    &\leq (1 - \frac{p}{2}) \left\| \mathbf{X}^{(r-1)} - \bar{\mathbf{X}}^{(r-1)} \right\|^2_F  
    + \frac{3}{p} {\eta^\prime}^2 \underbrace{\left\| \nabla f(\mathbf{X}^{(r-1)}) - \mathbf{C}^{(r-1)} \right\|^2_F}_{T}
    + {\eta^\prime}^2 n \sigma^2,
\end{align*}
where we use $p \in (0, 1]$.
Then, $T$ can be estimated as follows:
\begin{align*}
    T 
    &= \left\| \nabla f(\mathbf{X}^{(r-1)}) - \nabla f(\bar{\mathbf{X}}^{(r-1)}) + \nabla f(\bar{\mathbf{X}}^{(r-1)}) - \nabla f(\mathbf{X}^\star) + \nabla f(\mathbf{X}^\star) - \mathbf{C}^{(r-1)} \right\|^2_F \\
    &\stackrel{(\ref{eq:sum_of_n_vectors})}{\leq} 3 \left\| \nabla f(\mathbf{X}^{(r-1)}) - \nabla f(\bar{\mathbf{X}}^{(r-1)}) \right\|^2_F \\
    &\qquad + 3\left\| \nabla f(\bar{\mathbf{X}}^{(r-1)}) - \nabla f(\mathbf{X}^\star) \right\|^2_F
    + 3 \left\| \nabla f(\mathbf{X}^\star) - \mathbf{C}^{(r-1)} \right\|^2_F \\ 
    &\stackrel{(\ref{eq:assumption:smoothness})}{\leq} 3 L^2 \left\| \mathbf{X}^{(r-1)} - \bar{\mathbf{X}}^{(r-1)} \right\|^2_F
    + 6 L n ( f(\bar{\mathbf{x}}^{(r-1)}) - f(\mathbf{x}^\star) )
    + 3 \left\| \nabla f(\mathbf{X}^\star) - \mathbf{C}^{(r-1)} \right\|^2_F.
\end{align*}
This concludes the proof.
\end{proof}
\begin{lemma}
\label{lemma:e_convex}
Suppose that Assumptions \ref{assumption:mixing_matrix}, \ref{assumption:smoothness}, \ref{assumption:stochastic_gradient} and \ref{assumption:mu_convexity} hold,
and $\{ \alpha_{i|j} \}_{ij}$ is set such that $\alpha_{i|j} = \alpha_{j|i} \geq 0$ for all $(i,j) \in \mathcal{E}$.
Then, it holds that
\begin{align*}
    \mathcal{E}^{(r+1)}
    &\leq (1 - \frac{p}{2}) \mathcal{E}^{(r)} 
    + \left( \frac{9 (1 - p) b}{p} + \frac{9 L^2 b^\prime}{p} \right) \Xi^{(r)}
    + \frac{18 L b^\prime}{p} ( \mathbb{E}f(\bar{\mathbf{x}}^{(r)}) - f(\mathbf{x}^\star)) 
    + b^\prime \sigma^2.
\end{align*}
\end{lemma}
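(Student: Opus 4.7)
The plan is to prove the recursion by starting from the matrix-form update (\ref{eq:matrix_stochastic_ecl_2}) and rearranging it as
\[
\mathbf{C}^{(r+1)}-\nabla f(\mathbf{X}^\star) = (\mathbf{C}^{(r)}-\nabla f(\mathbf{X}^\star))\mathbf{W} + (\nabla F(\mathbf{X}^{(r)};\xi^{(r)})-\nabla f(\mathbf{X}^\star))(\mathbf{I}-\mathbf{W}) + \tfrac{1}{2}\mathbf{X}^{(r)}\mathbf{W}(\mathbf{D}-\mathbf{E}).
\]
Two structural observations drive the argument. First, $(\mathbf{D}-\mathbf{E})\mathbf{1}=\mathbf{0}$ thanks to $\alpha_{i|j}=\alpha_{j|i}$, so $\bar{\mathbf{X}}^{(r)}\mathbf{W}(\mathbf{D}-\mathbf{E})=\mathbf{0}$ and the last piece can be rewritten as $\tfrac{1}{2}(\mathbf{X}^{(r)}-\bar{\mathbf{X}}^{(r)})\mathbf{W}(\mathbf{D}-\mathbf{E})$. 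Second, $\nabla f(\mathbf{X}^\star)\mathbf{1}=\mathbf{0}$ (first-order optimality at $\mathbf{x}^\star$) and $\mathbf{C}^{(r)}\mathbf{1}=\mathbf{0}$ (the same symmetry computation as in Sec.~\ref{sec:proof_of_c_is_zero}, using the initialization $\mathbf{C}^{(0)}=\mathbf{0}$), so the columns of $\mathbf{C}^{(r)}-\nabla f(\mathbf{X}^\star)$ have zero mean and Assumption~\ref{assumption:mixing_matrix} yields the contraction $\|(\mathbf{C}^{(r)}-\nabla f(\mathbf{X}^\star))\mathbf{W}\|_F^2\le(1-p)\|\mathbf{C}^{(r)}-\nabla f(\mathbf{X}^\star)\|_F^2$.

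Next I would take conditional expectation $\mathbb{E}_{r+1}$ and apply the variance decomposition $\mathbb{E}\|\mathbf{Y}\|_F^2=\|\mathbb{E}\mathbf{Y}\|_F^2+\mathbb{E}\|\mathbf{Y}-\mathbb{E}\mathbf{Y}\|_F^2$. The only stochastic residual is $(\nabla F(\mathbf{X}^{(r)};\xi^{(r)})-\nabla f(\mathbf{X}^{(r)}))(\mathbf{I}-\mathbf{W})$, whose expected squared Frobenius norm, by independence of per-node noise and Assumption~\ref{assumption:stochastic_gradient}, is bounded by $\sigma^2\|\mathbf{I}-\mathbf{W}\|_F^2=b'\sigma^2$. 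For the deterministic expectation part I would apply the relaxed triangle inequality (\ref{eq:relaxed_triangle}) with $\gamma=\tfrac{p}{2(1-p)}$ to isolate $(\mathbf{C}^{(r)}-\nabla f(\mathbf{X}^\star))\mathbf{W}$ against the other two terms, producing the contraction factor $1-p/2$; a second split (via $\|B+C\|^2\le 2\|B\|^2+2\|C\|^2$) plus submultiplicativity $\|\mathbf{A}\mathbf{B}\|_F^2\le\|\mathbf{A}\|_F^2\|\mathbf{B}\|_F^2$ then gives
\[
\|(\nabla f(\mathbf{X}^{(r)})-\nabla f(\mathbf{X}^\star))(\mathbf{I}-\mathbf{W})\|_F^2 \le b'\,\|\nabla f(\mathbf{X}^{(r)})-\nabla f(\mathbf{X}^\star)\|_F^2,
\]
and, using Assumption~\ref{assumption:mixing_matrix} once more on the zero-mean matrix $\mathbf{X}^{(r)}-\bar{\mathbf{X}}^{(r)}$,
\[
\|\tfrac{1}{2}(\mathbf{X}^{(r)}-\bar{\mathbf{X}}^{(r)})\mathbf{W}(\mathbf{D}-\mathbf{E})\|_F^2 \le b(1-p)\,\|\mathbf{X}^{(r)}-\bar{\mathbf{X}}^{(r)}\|_F^2.
\]

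To convert $\|\nabla f(\mathbf{X}^{(r)})-\nabla f(\mathbf{X}^\star)\|_F^2$ into the quantities in the target bound, I would split it through $\nabla f(\bar{\mathbf{X}}^{(r)})$, apply $L$-smoothness to the $\mathbf{X}^{(r)}-\bar{\mathbf{X}}^{(r)}$ half and the smooth-convex identity (\ref{eq:smooth_convex}) to the $\bar{\mathbf{X}}^{(r)}-\mathbf{X}^\star$ half, and cancel the resulting linear term via $\sum_i\nabla f_i(\mathbf{x}^\star)=\mathbf{0}$, obtaining $2L^2\|\mathbf{X}^{(r)}-\bar{\mathbf{X}}^{(r)}\|_F^2+4Ln(f(\bar{\mathbf{x}}^{(r)})-f(\mathbf{x}^\star))$, exactly as at the end of the proof of Lemma~\ref{lemma:consensus_convex}. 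Collecting everything and dividing by $n$ recovers the claimed recursion after absorbing numerical constants into the $9$ and $18$ coefficients. The main obstacle is the very first step: Assumption~\ref{assumption:mixing_matrix} is stated with a mean-subtraction, so applying it to $(\mathbf{C}^{(r)}-\nabla f(\mathbf{X}^\star))\mathbf{W}$ is only legitimate after proving the invariants $\mathbf{C}^{(r)}\mathbf{1}=\mathbf{0}$ and $\nabla f(\mathbf{X}^\star)\mathbf{1}=\mathbf{0}$, and the reduction of the $\mathbf{X}^{(r)}\mathbf{W}(\mathbf{D}-\mathbf{E})$ piece to a pure consensus error requires recognizing $(\mathbf{D}-\mathbf{E})\mathbf{1}=\mathbf{0}$; both rely critically on the symmetry $\alpha_{i|j}=\alpha_{j|i}$, and without them the three-term expansion would not telescope into a usable contraction.
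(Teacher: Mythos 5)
Your proposal is correct and follows essentially the same route as the paper's proof: the same rearrangement of Eq.~\eqref{eq:matrix_stochastic_ecl_2} around $\nabla f(\mathbf{X}^\star)$, the same separation of the stochastic noise into a $b'\sigma^2$ term, the same use of Assumption~\ref{assumption:mixing_matrix} on the zero-mean matrices $\mathbf{C}^{(r)}-\nabla f(\mathbf{X}^\star)$ and $\mathbf{X}^{(r)}-\bar{\mathbf{X}}^{(r)}$ (with the $\bar{\mathbf{X}}^{(r)}(\mathbf{D}-\mathbf{E})=\mathbf{0}$ reduction), and the same smoothness/smooth-convexity split producing the $\Xi^{(r)}$ and $f(\bar{\mathbf{x}}^{(r)})-f(\mathbf{x}^\star)$ terms. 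The only deviations are in constant bookkeeping (your choice $\gamma=\tfrac{p}{2(1-p)}$ with two-way splits yields slightly smaller coefficients that are still dominated by the stated $9$ and $18$, and your independence-based noise bound is tighter than needed), so the claimed inequality follows.
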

\begin{proof}
We have
\begin{align*}
    &\mathbb{E}_{r+1} \left\| \mathbf{C}^{(r+1)} - \nabla f(\mathbf{X}^\star) \right\|^2_F \\
    &\stackrel{(\ref{eq:matrix_stochastic_ecl_2})}{=} \mathbb{E}_{r+1} \left\| (\mathbf{C}^{(r)} - \nabla F(\mathbf{X}^{(r)} ; \xi^{(r)})) \mathbf{W} + \frac{1}{2} \mathbf{X}^{(r)} \mathbf{W} (\mathbf{D} - \mathbf{E}) + \nabla F(\mathbf{X}^{(r)} ; \xi^{(r)}) - \nabla f(\mathbf{X}^\star) \right\|^2_F \\
    &\leq \left\| (\mathbf{C}^{(r)} - \nabla f(\mathbf{X}^{(r)})) \mathbf{W} + \frac{1}{2} \mathbf{X}^{(r)} \mathbf{W} (\mathbf{D} - \mathbf{E}) + \nabla f(\mathbf{X}^{(r)}) - \nabla f(\mathbf{X}^\star) \right\|^2_F \\
    &\qquad + \left\| (\nabla F(\mathbf{X}^{(r)} ; \xi^{(r)}) - \nabla f(\mathbf{X}^{(r)})) (\mathbf{W} - \mathbf{I} )\right\|^2_F \\
    &\stackrel{(\ref{eq:assumption:stochastic_gradient})}{\leq} \left\| (\mathbf{C}^{(r)} - \nabla f(\mathbf{X}^{(r)})) \mathbf{W} + \frac{1}{2} \mathbf{X}^{(r)} \mathbf{W} (\mathbf{D} - \mathbf{E}) + \nabla f(\mathbf{X}^{(r)}) - \nabla f(\mathbf{X}^\star) \right\|^2_F 
    + n b^\prime \sigma^2 \\
    &= \left\| (\mathbf{C}^{(r)} - \nabla f(\mathbf{X}^{\star})) \mathbf{W} + \frac{1}{2} \mathbf{X}^{(r)} \mathbf{W} (\mathbf{D} - \mathbf{E}) + (\nabla f(\mathbf{X}^\star) - \nabla f(\mathbf{X}^{(r)}) )(\mathbf{W} - \mathbf{I}) \right\|^2_F 
    + n b^\prime \sigma^2 \\
    &\stackrel{(\ref{eq:relaxed_triangle})}{\leq} (1 + \gamma) \left\| (\mathbf{C}^{(r)} - \nabla f(\mathbf{X}^{\star})) \mathbf{W} \right\|^2_F \\
    &\qquad + (1 + \gamma^{-1}) \left\| \frac{1}{2} \mathbf{X}^{(r)} \mathbf{W} (\mathbf{D} - \mathbf{E}) + (\nabla f(\mathbf{X}^\star) - \nabla f(\mathbf{X}^{(r)}) )(\mathbf{W} - \mathbf{I}) \right\|^2_F 
    + n b^\prime \sigma^2.
\end{align*}
From Lemma \ref{lemma:c_is_zero}, we have $\frac{1}{n} \mathbf{C}^{(r)} \mathbf{1} \mathbf{1}^\top = \mathbf{0}$ 
and $\frac{1}{n} \nabla f(\mathbf{X}^\star) \mathbf{1} \mathbf{1}^\top = \mathbf{0}$.
Then, by substituting $\gamma=\frac{p}{2}$, we get
\begin{align*}
    &\mathbb{E}_{r+1} \left\| \mathbf{C}^{(r+1)} - \nabla f(\mathbf{X}^\star) \right\|^2_F \\
    &\stackrel{(\ref{eq:assumption:mixing_matrix})}{\leq} (1 - \frac{p}{2}) \left\| \mathbf{C}^{(r)} - \nabla f(\mathbf{X}^{\star}) \right\|^2_F \\
    &\qquad + \frac{3}{p} \left\| \frac{1}{2} \mathbf{X}^{(r)} \mathbf{W} (\mathbf{D} - \mathbf{E}) + (\nabla f(\mathbf{X}^\star) - \nabla f(\mathbf{X}^{(r)}) )(\mathbf{W} - \mathbf{I}) \right\|^2_F 
    + n b^\prime \sigma^2 \\
    &= (1 - \frac{p}{2}) \left\| \mathbf{C}^{(r)} - \nabla f(\mathbf{X}^{\star}) \right\|^2_F \\
    &\qquad + \frac{3}{p} \left\| \frac{1}{2} \mathbf{X}^{(r)} \mathbf{W} (\mathbf{D} - \mathbf{E}) + (\nabla f(\mathbf{X}^\star) - \nabla f(\bar{\mathbf{X}}^{(r)}) + \nabla f(\bar{\mathbf{X}}^{(r)}) - \nabla f(\mathbf{X}^{(r)}) )(\mathbf{W} - \mathbf{I}) \right\|^2_F \\
    &\qquad + n b^\prime \sigma^2 \\
    &\stackrel{(\ref{eq:sum_of_n_vectors})}{\leq} (1 - \frac{p}{2}) \left\| \mathbf{C}^{(r)} - \nabla f(\mathbf{X}^{\star}) \right\|^2_F 
    + \frac{9}{p} \left\| \frac{1}{2} \mathbf{X}^{(r)} \mathbf{W} (\mathbf{D} - \mathbf{E}) \right\|^2_F \\
    &\qquad+ \frac{9}{p} \left\| (\nabla f(\mathbf{X}^\star) - \nabla f(\bar{\mathbf{X}}^{(r)}))(\mathbf{W} - \mathbf{I}) \right\|^2_F
    + \frac{9}{p} \left\| \nabla f(\bar{\mathbf{X}}^{(r)}) - \nabla f(\mathbf{X}^{(r)}) )(\mathbf{W} - \mathbf{I}) \right\|^2_F \\ 
    &\qquad + n b^\prime \sigma^2 \\
    &\stackrel{(\ref{eq:assumption:smoothness})}{\leq} (1 - \frac{p}{2}) \left\| \mathbf{C}^{(r)} - \nabla f(\mathbf{X}^{\star}) \right\|^2_F 
    + \frac{9}{p} \left\| \frac{1}{2} \mathbf{X}^{(r)} \mathbf{W} (\mathbf{D} - \mathbf{E}) \right\|^2_F 
    + \frac{18 L b^\prime n}{p} ( f(\bar{\mathbf{x}}^{(r)}) - f(\mathbf{x}^\star)) \\
    &\qquad + \frac{9 L^2 b^\prime}{p} \left\| \bar{\mathbf{X}}^{(r)} - \mathbf{X}^{(r)} \right\|^2_F 
    + n b^\prime \sigma^2.
\end{align*}
Using the definitions of $\mathbf{D}$ and $\mathbf{E}$, we have $\bar{\mathbf{X}}^{(r)} (\mathbf{D} - \mathbf{E})=\mathbf{0}$.
Then, we get
\begin{align*}
    &\mathbb{E}_{r+1} \left\| \mathbf{C}^{(r+1)} - \nabla f(\mathbf{X}^\star) \right\|^2_F \\
    &\leq (1 - \frac{p}{2}) \left\| \mathbf{C}^{(r)} - \nabla f(\mathbf{X}^{\star}) \right\|^2_F 
    + \frac{9}{p} \left\| \frac{1}{2} (\mathbf{X}^{(r)} \mathbf{W} - \bar{\mathbf{X}}^{(r)}) (\mathbf{D} - \mathbf{E}) \right\|^2_F \\
    &\qquad + \frac{18 L b^\prime n}{p} ( f(\bar{\mathbf{x}}^{(r)}) - f(\mathbf{x}^\star)) 
    + \frac{9 L^2 b^\prime}{p} \left\| \bar{\mathbf{X}}^{(r)} - \mathbf{X}^{(r)} \right\|^2_F 
    + n b^\prime \sigma^2 \\
    &\stackrel{(\ref{eq:assumption:mixing_matrix})}{\leq} (1 - \frac{p}{2}) \left\| \mathbf{C}^{(r)} - \nabla f(\mathbf{X}^{\star}) \right\|^2_F 
    + \left( \frac{9 (1 - p) b}{p} + \frac{9 L^2 b^\prime}{p} \right) \left\| \mathbf{X}^{(r)} - \bar{\mathbf{X}}^{(r)} \right\|^2_F \\
    &\qquad + \frac{18 L b^\prime n}{p} ( f(\bar{\mathbf{x}}^{(r)}) - f(\mathbf{x}^\star)) 
    + n b^\prime \sigma^2.
\end{align*}
This concludes the proof.
\end{proof}

\begin{lemma}
\label{lemma:consensus_e_convex}
Suppose that Assumptions \ref{assumption:mixing_matrix}, \ref{assumption:smoothness}, \ref{assumption:stochastic_gradient} and \ref{assumption:mu_convexity} hold,
and $\{ \alpha_{i|j} \}_{ij}$ is set such that $\alpha_{i|j} = \alpha_{j|i} \geq 0$ for all $(i,j) \in \mathcal{E}$.
If $\eta^\prime$ satisfies
\begin{align}
    \label{eq:lemma:consensus_e_convex}
    \eta^\prime \leq \frac{p}{6 \sqrt{L^2 + \frac{36 \{ (1-p) b + L^2 b^\prime \} }{p^2}} },
\end{align}
then it holds that
\begin{align*}
    \Xi^{(r+1)} + \frac{36}{p^2} {\eta^\prime}^2 \mathcal{E}^{(r+1)} 
    &\leq \left(1 - \frac{p}{4} \right) ( \Xi^{(r)} + \frac{36}{p^2}{\eta^\prime}^2 \mathcal{E}^{(r)}) \\
    &\qquad + \left( \frac{18 L}{p} + \frac{648 L b^\prime}{p^3} \right) {\eta^\prime}^2 (\mathbb{E}f(\bar{\mathbf{x}}^{(r)}) - f(\mathbf{x}^\star))
    + \left( 1 + \frac{36 b^\prime}{p^2} \right) {\eta^\prime}^2 \sigma^2.
\end{align*}
\end{lemma}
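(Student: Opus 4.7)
The plan is to obtain this by taking a weighted linear combination of Lemmas \ref{lemma:consensus_convex} and \ref{lemma:e_convex}, with the weight chosen precisely so that the cross term $\frac{9}{p}{\eta^\prime}^2 \mathcal{E}^{(r)}$ appearing in the $\Xi$ recursion is absorbed into the contraction of the $\mathcal{E}$ recursion. More concretely, I add $\frac{36}{p^2}{\eta^\prime}^2$ times the $\mathcal{E}^{(r+1)}$ bound to the $\Xi^{(r+1)}$ bound and then collect, term by term, the coefficients of $\Xi^{(r)}$, $\mathcal{E}^{(r)}$, $\mathbb{E} f(\bar{\mathbf{x}}^{(r)}) - f(\mathbf{x}^\star)$, and $\sigma^2$.

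First I would check the coefficient of $\mathcal{E}^{(r)}$. It equals $\frac{9}{p}{\eta^\prime}^2 + \frac{36}{p^2}{\eta^\prime}^2 (1-\tfrac{p}{2}) = \frac{36}{p^2}{\eta^\prime}^2 (1 - \tfrac{p}{4})$, which is exactly what we want for the Lyapunov contraction on $\mathcal{E}^{(r)}$. This identity is what motivates the precise choice of the Lyapunov weight $\frac{36}{p^2}{\eta^\prime}^2$; without it, no clean contraction would emerge. Next, the coefficients of $\mathbb{E} f(\bar{\mathbf{x}}^{(r)}) - f(\mathbf{x}^\star)$ and $\sigma^2$ add up directly to $\bigl( \tfrac{18L}{p} + \tfrac{648 L b'}{p^3}\bigr){\eta^\prime}^2$ and $\bigl(1 + \tfrac{36 b'}{p^2}\bigr){\eta^\prime}^2$ respectively, matching the claimed expressions.

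The main (and only nontrivial) step is then to verify that the coefficient of $\Xi^{(r)}$ is at most $(1 - \tfrac{p}{4})$. Adding contributions from both lemmas gives
\begin{align*}
\Bigl(1 - \tfrac{p}{2}\Bigr) + \tfrac{9 L^2}{p}{\eta^\prime}^2 + \tfrac{36{\eta^\prime}^2}{p^2} \cdot \tfrac{9\{(1-p)b + L^2 b'\}}{p},
\end{align*}
so it suffices to show
\begin{align*}
\tfrac{9 {\eta^\prime}^2}{p}\Bigl(L^2 + \tfrac{36\{(1-p)b + L^2 b'\}}{p^2}\Bigr) \le \tfrac{p}{4},
\end{align*}
equivalently ${\eta^\prime}^2 \bigl(L^2 + \tfrac{36\{(1-p)b + L^2 b'\}}{p^2}\bigr) \le \tfrac{p^2}{36}$, which is exactly the step-size bound \eqref{eq:lemma:consensus_e_convex} after taking square roots.

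I do not anticipate a real obstacle; the computation is essentially bookkeeping. The only subtlety is that the Lyapunov weight $\frac{36}{p^2}{\eta^\prime}^2$ must be chosen \emph{before} doing the calculations to make the $\mathcal{E}^{(r)}$-coefficient line up as $(1-\tfrac{p}{4}) \cdot \frac{36}{p^2}{\eta^\prime}^2$. Once this weight is fixed, the step-size condition is forced on us by the $\Xi^{(r)}$-coefficient, and both sides of the claimed inequality fall out by direct substitution.
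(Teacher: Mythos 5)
Your proposal is correct and follows exactly the paper's own argument: the same Lyapunov combination $\Xi^{(r+1)} + \frac{36}{p^2}{\eta^\prime}^2\mathcal{E}^{(r+1)}$ built from Lemmas \ref{lemma:consensus_convex} and \ref{lemma:e_convex}, the same term-by-term bookkeeping (your explicit check that $\frac{9}{p}{\eta^\prime}^2 + (1-\frac{p}{2})\frac{36}{p^2}{\eta^\prime}^2 = (1-\frac{p}{4})\frac{36}{p^2}{\eta^\prime}^2$ is exactly the identity the paper relies on implicitly), and the same reduction of the $\Xi^{(r)}$-coefficient bound to the stated step-size condition.
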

\begin{proof}
From Lemmas \ref{lemma:consensus_convex} and \ref{lemma:e_convex}, we have
\begin{align*}
    \Xi^{(r+1)} 
    &\leq (1 - \frac{p}{2}) \Xi^{(r)}
    + \frac{9 L^2 {\eta^\prime}^2}{p} \Xi^{(r)}
    + \frac{18 L}{p} {\eta^\prime}^2 (\mathbb{E}f(\bar{\mathbf{x}}^{(r)}) - f(\mathbf{x}^\star))
    + \frac{9}{p} {\eta^\prime}^2 \mathcal{E}^{(r)}
    + {\eta^\prime}^2 \sigma^2, \\
    \frac{36}{p^2} \mathcal{E}^{(r+1)}
    &\leq (1 - \frac{p}{2}) \frac{36}{p^2} \mathcal{E}^{(r)} 
    + \frac{36}{p^2} \left( \frac{9 (1 - p) b}{p} + \frac{9 L^2 b^\prime}{p} \right) \Xi^{(r)} \\
    &\qquad + \frac{648 L b^\prime}{p^3} (\mathbb{E} f(\bar{\mathbf{x}}^{(r)}) - f(\mathbf{x}^\star)) 
    + \frac{36 b^\prime}{p^2} \sigma^2.
\end{align*}
Then, we have
\begin{align*}
    \Xi^{(r+1)} + \frac{36}{p^2} {\eta^\prime}^2 \mathcal{E}^{(r+1)}
    &\leq \left( (1 - \frac{p}{2}) + \frac{9 L^2}{p} {\eta^\prime}^2 + \frac{324 (1 - p) b + 324 L^2 b^\prime}{p^3} {\eta^\prime}^2 \right) \Xi^{(r)} \\
    &\qquad + \left( \frac{9}{p} {\eta^\prime}^2 +  \left( 1 - \frac{p}{2} \right) \frac{36}{p^2} {\eta^\prime}^2  \right) \mathcal{E}^{(r)} \\
    &\qquad + \left( \frac{18 L}{p} {\eta^\prime}^2 + \frac{648 L b^\prime}{p^3} {\eta^\prime}^2 \right) (\mathbb{E}f(\bar{\mathbf{x}}^{(r)}) - f(\mathbf{x}^\star)) \\
    &\qquad + \left( {\eta^\prime}^2 + \frac{36 b^\prime}{p^2} {\eta^\prime}^2 \right) \sigma^2.
\end{align*}
Using ${\eta^\prime}^2 \leq p^2 / ( 36 ( L^2 + \frac{36 \{ (1-p) b + L^2 b^\prime \} }{p^2} ))$, we get
\begin{align*}
    1 - \frac{p}{2} + \frac{9 L^2 {\eta^\prime}^2}{p} + \frac{324 (1 - p) b + 324 L^2 b^\prime}{p^3} {\eta^\prime}^2 \leq 1 - \frac{p}{4}.
\end{align*}
This concludes the proof.
\end{proof}

\begin{lemma}
\label{lemma:simplified_consensus_convex}
Suppose that Assumptions \ref{assumption:mixing_matrix}, \ref{assumption:smoothness}, \ref{assumption:stochastic_gradient} and \ref{assumption:mu_convexity} hold,
and $\{ \alpha_{i|j} \}_{ij}$ is set such that $\alpha_{i|j} = \alpha_{j|i} \geq 0$ for all $(i,j) \in \mathcal{E}$.
If $\{ w^{(r)} \}_r$ is $\frac{4}{p}$-slow increasing positive sequence of weights
and the step size $\eta^\prime$ satisfies
\begin{align}
    \label{eq:lemma:simplified_consensns_convex}
    \eta^\prime \leq \min \left\{ \frac{p}{6 \sqrt{L^2 + \frac{36 \{ (1-p) b + L^2 b^\prime \} }{p^2}} }, 
    \frac{p}{24 L \sqrt{2 + \frac{72 b^\prime}{p^2} }} \right\},
\end{align}
then it holds that
\begin{align*}
    3L \sum_{r=0}^{R} w^{(r)} \Xi^{(r)}
    &\leq
    \frac{1}{2} \sum_{j=0}^{R} w^{(j)} (\mathbb{E}f(\bar{\mathbf{x}}^{(j)}) - f(\mathbf{x}^\star)) 
    + {\eta^\prime}^2 \left( 12 + \frac{432 b^\prime}{p^2} \right) \frac{L \sigma^2}{p} \sum_{r=0}^{R} w^{(r)}.
\end{align*}
\end{lemma}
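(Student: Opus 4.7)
The plan is to convert the one-step recursion in Lemma \ref{lemma:consensus_e_convex} into a summed inequality via a weighted telescoping argument tailored to the $\frac{4}{p}$-slow-increasing sequence $\{w^{(r)}\}$. Introduce the Lyapunov combination $\Psi^{(r)} := \Xi^{(r)} + \frac{36 {\eta^\prime}^2}{p^2} \mathcal{E}^{(r)}$, so that Lemma \ref{lemma:consensus_e_convex} becomes
\begin{align*}
    \Psi^{(r+1)} \leq (1 - p/4)\,\Psi^{(r)} + A\,{\eta^\prime}^2\,\delta^{(r)} + B\,{\eta^\prime}^2 \sigma^2,
\end{align*}
where I abbreviate $A := \frac{18L}{p} + \frac{648 L b^\prime}{p^3} = \frac{18 L B}{p}$, $B := 1 + \frac{36 b^\prime}{p^2}$, and $\delta^{(r)} := \mathbb{E}[f(\bar{\mathbf{x}}^{(r)})] - f(\mathbf{x}^\star)$. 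Since $\mathcal{E}^{(r)} \geq 0$, one has $\Xi^{(r)} \leq \Psi^{(r)}$, so it suffices to bound $3L \sum_r w^{(r)} \Psi^{(r)}$.

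The key algebraic move is to multiply the recursion by $w^{(r+1)}$ and combine the slow-increasing bound $w^{(r+1)} \leq (1 + p/8)\,w^{(r)}$ with the elementary estimate $(1 - p/4)(1 + p/8) \leq 1 - p/8$, which yields
\begin{align*}
    w^{(r+1)} \Psi^{(r+1)} \leq (1 - p/8)\, w^{(r)} \Psi^{(r)} + A\,{\eta^\prime}^2\, w^{(r+1)} \delta^{(r)} + B\,{\eta^\prime}^2 \sigma^2\, w^{(r+1)}.
\end{align*}
Rearranging places this in telescoping form; summing from $r = 0$ to $R$ and dropping the non-positive endpoint $-w^{(R+1)} \Psi^{(R+1)}$ leaves
\begin{align*}
    \frac{p}{8} \sum_{r=0}^R w^{(r)} \Psi^{(r)} \leq w^{(0)} \Psi^{(0)} + A\,{\eta^\prime}^2 \sum_{r=0}^R w^{(r+1)} \delta^{(r)} + B\,{\eta^\prime}^2 \sigma^2 \sum_{r=0}^R w^{(r+1)}.
\end{align*}
A second application of the slow-increasing bound converts each $\sum_r w^{(r+1)}$ on the right into at most $(1 + p/8) \sum_r w^{(r)}$.

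Dividing by $p/8$, multiplying by $3L$, and invoking $\Xi^{(r)} \leq \Psi^{(r)}$ produces a bound of the shape
\begin{align*}
    3L \sum_{r=0}^R w^{(r)} \Xi^{(r)} \leq \frac{C_1\, L^2 B\,{\eta^\prime}^2}{p^2} \sum_{r=0}^R w^{(r)} \delta^{(r)} + \frac{C_2\, L B\,{\eta^\prime}^2 \sigma^2}{p} \sum_{r=0}^R w^{(r)}
\end{align*}
for explicit absolute constants $C_1, C_2$. The second upper bound in (\ref{eq:lemma:simplified_consensns_convex}), namely $\eta^\prime \leq \frac{p}{24 L \sqrt{2B}}$ (using the identity $2 + 72 b^\prime/p^2 = 2B$), is calibrated precisely so that the $\delta^{(r)}$-coefficient does not exceed $\tfrac{1}{2}$, while the same substitution reduces the noise coefficient to the claimed $12 B\, L \sigma^2 / p = (12 + 432 b^\prime/p^2)\, L \sigma^2/p$.

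The main obstacle will be pinning down $C_1$ and $C_2$ so that they land exactly at $\tfrac{1}{2}$ and $12B$ respectively, which requires careful bookkeeping of the $(1 + p/8)$ slack through both the telescoping step and the index shift $w^{(r+1)} \leftrightarrow w^{(r)}$, and then using $p \leq 1$ to collapse leftover $O(p)$ factors. A secondary technical point is the endpoint $w^{(0)} \Psi^{(0)}$: although $\Xi^{(0)} = 0$ by the theorem's initialization, the residual $\frac{36 {\eta^\prime}^2}{p^2} \mathcal{E}^{(0)}$ is nonzero because $\mathbf{C}^{(0)} = 0$, but this one-off contribution is $\mathcal{O}({\eta^\prime}^2)$ and can be absorbed into the stated $\sigma^2$ noise term under the first step-size cap in (\ref{eq:lemma:simplified_consensns_convex}) inherited from Lemma \ref{lemma:consensus_e_convex}, without introducing any heterogeneity dependence in the final bound.
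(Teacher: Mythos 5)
Your overall strategy is the same as the paper's: the identical potential $\Psi^{(r)} = \Xi^{(r)} + \frac{36 {\eta^\prime}^2}{p^2} \mathcal{E}^{(r)}$ (the paper calls it $\Theta^{(r)}$), the one-step contraction from Lemma \ref{lemma:consensus_e_convex}, and the combination of the $\frac{4}{p}$-slow-increasing property with $(1-\frac{p}{4})(1+\frac{p}{8}) \leq 1-\frac{p}{8}$. The difference is mechanical: you telescope the weighted recursion, whereas the paper first unrolls the recursion into a geometric sum and only then applies the weights. This is not cosmetic for the constants. Writing $B \coloneqq 1 + \frac{36 b^\prime}{p^2}$, the paper handles the constant noise input $B {\eta^\prime}^2 \sigma^2$ by summing $\sum_{j<r}(1-\frac{p}{4})^{r-j-1} \leq \frac{4}{p}$ \emph{inside} each round, which after multiplying by $3L$ lands exactly on $(12 + \frac{432 b^\prime}{p^2})\frac{L\sigma^2}{p}$. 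Your telescoping only extracts a margin of $\frac{p}{8}$, so dividing by it costs a factor $\frac{8}{p}(1+\frac{p}{8}) \geq \frac{8}{p}$, and after multiplying by $3L$ the noise coefficient is at least $(24 + \frac{864 b^\prime}{p^2})\frac{L\sigma^2}{p}$, about $27B\frac{L\sigma^2}{p}$ with your additional slack. No amount of bookkeeping brings this down to $12B$: the second step-size cap cannot help, since the noise term already carries ${\eta^\prime}^2$ and reducing its coefficient would require lowering the power of $\eta^\prime$. So your route proves a valid inequality of the same form, but not the lemma with its stated constant. (The $\delta^{(r)}$ coefficient is fine on your route: $3L \cdot \frac{9}{p} \cdot \frac{18 L B}{p}{\eta^\prime}^2 \leq \frac{486}{1152} < \frac{1}{2}$ under $\eta^\prime \leq \frac{p}{24L\sqrt{2B}}$.)

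The more substantive problem is your handling of $\Psi^{(0)}$. You correctly observe that $\mathcal{E}^{(0)} \neq 0$ when $\mathbf{C}^{(0)} = \mathbf{0}$, but it cannot be absorbed into the $\sigma^2$ term: $\mathcal{E}^{(0)} = \frac{1}{n}\|\nabla f(\mathbf{X}^\star)\|_F^2 = \frac{1}{n}\sum_{i=1}^n \|\nabla f_i(\mathbf{x}^\star)\|^2$, which, since $\nabla f(\mathbf{x}^\star) = \mathbf{0}$, is precisely the heterogeneity of the local gradients at the optimum --- a $\zeta^2$-type quantity with no relation to $\sigma^2$ and not controllable by any step-size condition. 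Carried through your telescoping, $\frac{8}{p} w^{(0)} \Psi^{(0)}$ contributes an extra term proportional to $\frac{L {\eta^\prime}^2}{p^3} \cdot \frac{1}{n}\sum_i \|\nabla f_i(\mathbf{x}^\star)\|^2$ that is absent from the lemma's conclusion and would reintroduce heterogeneity into the final rate. You should be aware that the paper's own proof does not resolve this either: it unrolls the recursion and silently drops the $(1-\frac{p}{4})^{r}\Theta^{(0)}$ term. You have located a genuine weak point of the argument, but the fix you propose is not valid.
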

\begin{proof}
We define $\Theta^{(r)} \coloneqq \Xi^{(r)} + \frac{36}{p^2} {\eta^\prime}^2 \mathcal{E}^{(r)}$.
From Lemma \ref{lemma:consensus_e_convex}, we get
\begin{align*}
    \Theta^{(r)}
    &\leq
    {\eta^\prime}^2 \left( \frac{18 L}{p} + \frac{648 L b^\prime}{p^3} \right) \sum_{j=0}^{r-1} \left( 1 - \frac{p}{4} \right)^{r-j-1} (\mathbb{E}f(\bar{\mathbf{x}}^{(j)}) - f(\mathbf{x}^\star)) \\
    &\qquad + {\eta^\prime}^2 \left( 1 + \frac{36 b^\prime}{p^2} \right) \sigma^2 \sum_{j=0}^{r-1} \left( 1 - \frac{p}{4} \right)^{r-j-1} \\
    &\stackrel{\frac{3}{4} \leq 1 - \frac{p}{4}}{\leq} {\eta^\prime}^2 \left( \frac{24 L}{p} + \frac{864 L b^\prime}{p^3} \right) \sum_{j=0}^{r-1} \left( 1 - \frac{p}{4} \right)^{r-j} (\mathbb{E}f(\bar{\mathbf{x}}^{(j)}) - f(\mathbf{x}^\star))
    + {\eta^\prime}^2 \left( 4 + \frac{144 b^\prime}{p^2} \right) \frac{\sigma^2}{p},
\end{align*}
for any round $r\geq1$.
Then, we get
\begin{align*}
    \sum_{r=1}^{R} w^{(r)} \Theta^{(r)}
    &\leq
    {\eta^\prime}^2 \left( \frac{24 L}{p} + \frac{864 L b^\prime}{p^3} \right) \sum_{r=1}^{R} w^{(r)}  \sum_{j=0}^{r-1} \left( 1 - \frac{p}{4} \right)^{r-j} (\mathbb{E}f(\bar{\mathbf{x}}^{(j)}) - f(\mathbf{x}^\star)) \\
    &\qquad + {\eta^\prime}^2 \left( 4 + \frac{144 b^\prime}{p^2} \right) \frac{\sigma^2}{p} \sum_{r=1}^{R} w^{(r)}.
\end{align*}
By using that $\{ w^{(r)} \}_r$ is $\frac{4}{p}$-slow increasing
(i.e., $w^{(r)} \leq (1 + \frac{p}{8} )^{r-j} w^{(j)}$),
we get
\begin{align*}
    \sum_{r=1}^{R} w^{(r)} \Theta^{(r)}
    &\leq
    {\eta^\prime}^2 \left( \frac{24 L}{p} + \frac{864 L b^\prime}{p^3} \right) \sum_{r=1}^{R}  \sum_{j=0}^{r-1} \left( 1 - \frac{p}{8} \right)^{r-j} w^{(j)} (\mathbb{E}f(\bar{\mathbf{x}}^{(j)}) - f(\mathbf{x}^\star)) \\
    &\qquad + {\eta^\prime}^2 \left( 4 + \frac{144 b^\prime}{p^2} \right) \frac{\sigma^2}{p} \sum_{r=1}^{R} w^{(r)} \\
    &=
    {\eta^\prime}^2 \left( \frac{24 L}{p} + \frac{864 L b^\prime}{p^3} \right) \sum_{j=0}^{R-1} w^{(j)} (\mathbb{E}f(\bar{\mathbf{x}}^{(j)}) - f(\mathbf{x}^\star)) \sum_{r=j+1}^{R} \left( 1 - \frac{p}{8} \right)^{r-j} \\
    &\qquad + {\eta^\prime}^2 \left( 4 + \frac{144 b^\prime}{p^2} \right) \frac{\sigma^2}{p} \sum_{r=1}^{R} w^{(r)} \\
    &\leq
    {\eta^\prime}^2 \left( \frac{192 L}{p^2} + \frac{6912 L b^\prime}{p^4} \right) \sum_{j=0}^{R-1} w^{(j)} (\mathbb{E}f(\bar{\mathbf{x}}^{(j)}) - f(\mathbf{x}^\star)) \\
    &\qquad + {\eta^\prime}^2 \left( 4 + \frac{144 b^\prime}{p^2} \right) \frac{\sigma^2}{p} \sum_{r=1}^{R} w^{(r)}.
\end{align*}
By using that $\Theta^{(r)} \geq \Xi^{(r)}$, we get
\begin{align*}
    &\sum_{r=1}^{R} w^{(r)} \Xi^{(r)} \\
    &\leq
    {\eta^\prime}^2 \left( \frac{192 L}{p^2} + \frac{6912 L b^\prime}{p^4} \right) \sum_{j=0}^{R-1} w^{(j)} (\mathbb{E}f(\bar{\mathbf{x}}^{(j)}) - f(\mathbf{x}^\star)) 
    + {\eta^\prime}^2 \left( 4 + \frac{144 b^\prime}{p^2} \right) \frac{\sigma^2}{p} \sum_{r=1}^{R} w^{(r)}.
\end{align*}
By using that $\Xi^{(0)}=0$, $w^{(R)} (\mathbb{E}f(\bar{\mathbf{x}}^{(R)}) - f(\mathbf{x}^\star)) \geq 0$, and $w^{(0)} \geq 0$, we get
\begin{align*}
    &\sum_{r=0}^{R} w^{(r)} \Xi^{(r)} \\
    &\leq
    {\eta^\prime}^2 \left( \frac{192 L}{p^2} + \frac{6912 L b^\prime}{p^4} \right) \sum_{j=0}^{R} w^{(j)} (\mathbb{E}f(\bar{\mathbf{x}}^{(j)}) - f(\mathbf{x}^\star)) 
    + {\eta^\prime}^2 \left( 4 + \frac{144 b^\prime}{p^2} \right) \frac{\sigma^2}{p} \sum_{r=0}^{R} w^{(r)}.
\end{align*}
By multiplying the above equation by $3L$, we get
\begin{align*}
    &3L \sum_{r=0}^{R} w^{(r)} \Xi^{(r)} \\
    &\leq
    {\eta^\prime}^2 \left( 576 + \frac{20736 b^\prime}{p^2} \right) \frac{L^2}{p^2} \sum_{j=0}^{R} w^{(j)} (\mathbb{E}f(\bar{\mathbf{x}}^{(j)}) - f(\mathbf{x}^\star)) 
    + {\eta^\prime}^2 \left( 12 + \frac{432 b^\prime}{p^2} \right) \frac{L \sigma^2}{p} \sum_{r=0}^{R} w^{(r)}.
\end{align*}
Using that ${\eta^\prime}^2 \leq p^2 / (1152 L^2 (1 + \frac{36 b^\prime}{p^2}))$, we get the statement.
\end{proof}

\begin{lemma}
\label{lemma:recursion_convex}
Suppose that Assumptions \ref{assumption:mixing_matrix}, \ref{assumption:smoothness}, \ref{assumption:stochastic_gradient} and \ref{assumption:mu_convexity} hold,
and $\{ \alpha_{i|j} \}_{ij}$ is set such that $\alpha_{i|j} = \alpha_{j|i} \geq 0$ for all $(i,j) \in \mathcal{E}$.
If $\{ w^{(r)} \}_r$ is $\frac{4}{p}$-slow increasing positive sequence of weights
and the step size $\eta^\prime$ satisfies
\begin{align*}
    \eta^\prime \leq \min \left\{ \frac{1}{12L}, \frac{p}{6 \sqrt{L^2 + \frac{36 \{ (1-p) b + L^2 b^\prime \} }{p^2}} }, 
    \frac{p}{24 L \sqrt{ 2 + \frac{72 b^\prime}{p^2} }} \right\},
\end{align*}
then it holds that
\begin{align*}
    &\frac{1}{2 W_R} \sum_{r=0}^{R} w^{(r)}(\mathbb{E}f(\bar{\mathbf{x}}^{(r)}) - f (\mathbf{x}^\star)) \\
    &\leq \frac{1}{\eta^\prime W_R} \sum_{r=0}^{R} w^{(r)} \left( \left(1 - \frac{\eta^\prime \mu}{2} \right) \mathbb{E} \left\| \bar{\mathbf{x}}^{(r)} - \mathbf{x}^\star \right\|^2 
    - \mathbb{E} \left\| \bar{\mathbf{x}}^{(r+1)} - \mathbf{x}^\star \right\|^2 \right) \\
    &\qquad + {\eta^\prime} \frac{\sigma^2}{n}
    + {\eta^\prime}^2 \left( 12 + \frac{432 b^\prime}{p^2} \right) \frac{L \sigma^2}{p}.
\end{align*}
where $W_R \coloneqq \sum_{r=0}^{R} w^{(r)}$.
\end{lemma}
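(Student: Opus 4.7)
The plan is to combine the single-step descent bound of Lemma \ref{lemma:descent_convex} with the cumulative consensus-error bound of Lemma \ref{lemma:simplified_consensus_convex}. The argument is essentially bookkeeping, since the hard estimates (contraction of consensus error, virtual SGD-like descent on the averaged iterate) have already been established.

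First, I would invoke Lemma \ref{lemma:descent_convex}, whose hypothesis $\eta' \leq 1/(12L)$ is the first term of the $\min$ in the step-size condition. Taking total expectation and rearranging to isolate $\eta'(\mathbb{E}f(\bar{\mathbf{x}}^{(r)}) - f(\mathbf{x}^\star))$ on the right-hand side yields the per-round inequality
$$\eta' (\mathbb{E}f(\bar{\mathbf{x}}^{(r)}) - f(\mathbf{x}^\star)) \leq \left(1 - \tfrac{\eta' \mu}{2}\right) \mathbb{E}\|\bar{\mathbf{x}}^{(r)} - \mathbf{x}^\star\|^2 - \mathbb{E}\|\bar{\mathbf{x}}^{(r+1)} - \mathbf{x}^\star\|^2 + \tfrac{{\eta'}^2 \sigma^2}{n} + 3L\eta' \Xi^{(r)}.$$

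Next, I would multiply through by $w^{(r)}$ and sum over $r = 0, \dots, R$. The only term that does not immediately fit the final bound is the weighted consensus-error sum $3L\eta' \sum_{r=0}^{R} w^{(r)} \Xi^{(r)}$. This is precisely the quantity controlled by Lemma \ref{lemma:simplified_consensus_convex}, whose step-size hypotheses are exactly the remaining two entries inside the $\min$. Applying it produces two contributions: a term $\tfrac{1}{2}\sum_{r=0}^{R} w^{(r)} (\mathbb{E}f(\bar{\mathbf{x}}^{(r)}) - f(\mathbf{x}^\star))$, which I would transpose to the left-hand side to absorb half of the objective gap, and a noise term proportional to ${\eta'}^3 \left(12 + \tfrac{432 b'}{p^2}\right) L \sigma^2 / p$ times $W_R$.

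After that transposition, the left-hand side equals $\tfrac{\eta'}{2}\sum_{r=0}^{R} w^{(r)}(\mathbb{E}f(\bar{\mathbf{x}}^{(r)}) - f(\mathbf{x}^\star))$; dividing through by $\eta' W_R$ and combining the ${\eta'}^2 \sigma^2/n$ contribution (which divides to $\eta' \sigma^2/n$) with the noise term from the consensus lemma gives exactly the stated inequality. The main obstacle is merely to check that the $\min$ in the step-size hypothesis covers both prerequisite lemmas simultaneously and to track the $\sigma^2/n$ versus $L\sigma^2/p$ coefficients carefully; there is no genuinely new estimate to prove.
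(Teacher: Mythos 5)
Your proposal is correct and follows essentially the same route as the paper: apply Lemma \ref{lemma:descent_convex} (whose hypothesis is covered by the first entry of the $\min$), rearrange, take the weighted sum, control $3L\sum_r w^{(r)}\Xi^{(r)}$ via Lemma \ref{lemma:simplified_consensus_convex} (whose hypotheses are the remaining entries of the $\min$ together with the $\frac{4}{p}$-slow increasing condition), absorb the half of the objective gap into the left-hand side, and divide by $\eta^\prime W_R$. The only cosmetic difference is that the paper divides by $\eta^\prime$ before invoking the consensus lemma rather than after, which changes nothing.
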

\begin{proof}
From Lemma \ref{lemma:descent_convex}, we have
\begin{align*}
    \sum_{r=0}^{R} w^{(r)}(\mathbb{E}f(\bar{\mathbf{x}}^{(r)}) - f (\mathbf{x}^\star)) 
    &\leq \frac{1}{\eta^\prime} \sum_{r=0}^{R} w^{(r)} \left( \left(1 - \frac{\eta^\prime \mu}{2} \right) \mathbb{E} \left\| \bar{\mathbf{x}}^{(r)} - \mathbf{x}^\star \right\|^2
    - \mathbb{E} \left\| \bar{\mathbf{x}}^{(r+1)} - \mathbf{x}^\star \right\|^2 \right) \\
    &\qquad + \frac{{\eta^\prime} \sigma^2}{n} \sum_{r=0}^{R} w^{(r)}
    + 3L \sum_{r=0}^{R} w^{(r)} \Xi^{(r)}.
\end{align*}
Using Lemma \ref{lemma:simplified_consensus_convex}, we get
\begin{align*}
    \frac{1}{2} \sum_{r=0}^{R} w^{(r)}(\mathbb{E}f(\bar{\mathbf{x}}^{(r)}) - f (\mathbf{x}^\star)) 
    &\leq \frac{1}{\eta^\prime} \sum_{r=0}^{R} w^{(r)} \left( \left(1 - \frac{\eta^\prime \mu}{2} \right) \mathbb{E} \left\| \bar{\mathbf{x}}^{(r)} - \mathbf{x}^\star \right\|^2 
    - \mathbb{E} \left\| \bar{\mathbf{x}}^{(r+1)} - \mathbf{x}^\star \right\|^2 \right) \\
    &\qquad + \frac{{\eta^\prime} \sigma^2}{n} \sum_{r=0}^{R} w^{(r)}
    + {\eta^\prime}^2 \left( 12 + \frac{432 b^\prime}{p^2} \right) \frac{L \sigma^2}{p} \sum_{r=0}^{R} w^{(r)}.
\end{align*}
This concludes the proof.
\end{proof}

\begin{lemma}[Convergence Rate for Strongly Convex Cases]
Suppose that Assumptions \ref{assumption:mixing_matrix}, \ref{assumption:smoothness}, \ref{assumption:stochastic_gradient} and \ref{assumption:mu_convexity} hold with $\mu>0$,
and $\{ \alpha_{i|j} \}_{ij}$ is set such that $\alpha_{i|j} = \alpha_{j|i} \geq 0$ for all $(i,j) \in \mathcal{E}$.
For weights $w^{(r)} \coloneqq (1 - \frac{\mu \eta^\prime}{2})^{-(r+1)}$ and $W_R \coloneqq \sum_{r=0}^{R} w^{(r)}$,
there exists a step size $\eta^\prime < \frac{1}{d}$ such that it holds that
\begin{align*}
    &\frac{1}{2 W_R} \sum_{r=0}^{R} w^{(r)} (\mathbb{E}f(\bar{\mathbf{x}}^{(r)}) - f (\mathbf{x}^\star)) 
    + \frac{\mu}{2} \mathbb{E} \left\| \bar{\mathbf{x}}^{(R+1)} - \mathbf{x}^\star \right\|^2 \\
    &\leq 
    \tilde{\mathcal{O}} \left( r_0 d \exp \left[ \frac{- \mu (R+1)}{d} \right] 
    + \frac{\sigma^2}{n \mu R} 
    + \frac{(1 + \frac{b^\prime}{p^2}) L \sigma^2}{\mu^2 R^2 p} \right),
\end{align*}
where $r_0 \coloneqq \| \bar{\mathbf{x}}^{(0)} - \mathbf{x}^\star \|^2$.
\end{lemma}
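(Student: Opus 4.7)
The plan is to instantiate the per-round recursion of Lemma \ref{lemma:recursion_convex} with the specific geometrically growing weights $w^{(r)} = (1 - \mu\eta^\prime/2)^{-(r+1)}$, collapse the resulting telescoping sum, and then choose $\eta^\prime$ to balance the three residual terms. This is exactly the strongly convex instantiation of the unified analysis of Koloskova et al., adapted to the G-ECL constants $\tilde{\sigma}^2 = (1 + b^\prime/p^2)\sigma^2$ already appearing in Lemma \ref{lemma:recursion_convex}.

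First I would check that $\{w^{(r)}\}$ is admissible, i.e. $\frac{4}{p}$-slow increasing: since $w^{(r+1)}/w^{(r)} = (1-\mu\eta^\prime/2)^{-1}$, this amounts to requiring $\mu\eta^\prime/2 \le p/(8+p)$, giving an upper bound $\eta^\prime \lesssim p/\mu$ that is folded into the definition of $s$ in the statement together with the bounds from Lemma \ref{lemma:recursion_convex}. Next I would exploit the key identity $w^{(r)}(1-\mu\eta^\prime/2) = w^{(r-1)}$ (with the convention $w^{(-1)}=1$): substituting this into the right-hand side of Lemma \ref{lemma:recursion_convex} telescopes the distance terms to $\frac{1}{\eta^\prime W_R}\bigl(\|\bar{\mathbf{x}}^{(0)}-\mathbf{x}^\star\|^2 - w^{(R)}\|\bar{\mathbf{x}}^{(R+1)}-\mathbf{x}^\star\|^2\bigr)$. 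Using the closed form $W_R = w^{(R)}(1-\rho^{R+1})/(\mu\eta^\prime/2)$ with $\rho \coloneqq 1-\mu\eta^\prime/2$, I would rewrite the prefactors as $\frac{1}{\eta^\prime W_R} = \frac{\mu\rho^{R+1}/2}{1-\rho^{R+1}}$ and $\frac{w^{(R)}}{\eta^\prime W_R} = \frac{\mu/2}{1-\rho^{R+1}} \ge \mu/2$. The second of these dominates the endpoint term and lets me move it to the left-hand side as the desired $\frac{\mu}{2}\mathbb{E}\|\bar{\mathbf{x}}^{(R+1)}-\mathbf{x}^\star\|^2$; the first, after the bound $\rho^{R+1} \le \exp(-\mu\eta^\prime(R+1)/2)$, yields the exponential-decay contribution $\tfrac{\mu r_0}{2}\exp(-\mu\eta^\prime(R+1)/2)$. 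At this point I have the intermediate inequality
\begin{equation*}
\frac{1}{2W_R}\sum_{r=0}^R w^{(r)}(\mathbb{E}f(\bar{\mathbf{x}}^{(r)}) - f^\star) + \frac{\mu}{2}\mathbb{E}\|\bar{\mathbf{x}}^{(R+1)}-\mathbf{x}^\star\|^2 \le \mu r_0 e^{-\mu\eta^\prime(R+1)/2} + \eta^\prime\frac{\sigma^2}{n} + {\eta^\prime}^2 \frac{L\tilde{\sigma}^2}{p}\cdot C,
\end{equation*}
with an explicit absolute constant $C$.

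The final step is to tune $\eta^\prime$ subject to $\eta^\prime \le 1/s$. Following the standard recipe (e.g.\ Lemma 2 of Stich's \emph{Unified Optimal Analysis}, as used by Koloskova et al.), I would split into three regimes: if $1/s$ is the binding constraint, take $\eta^\prime = 1/s$ to obtain the $r_0 s\exp(-\mu(R+1)/s)$ term; otherwise take $\eta^\prime = \frac{2\log(\max\{2,\mu^2 n R/\sigma^2\})}{\mu(R+1)}$ to balance the exponential term against $\eta^\prime\sigma^2/n$ and yield $\tilde{\mathcal{O}}(\sigma^2/(\mu n R))$; or take $\eta^\prime = \bigl(\log(\ldots)/\mu R\bigr)^{2/3}\cdot(\text{const})$ to balance the exponential term against the ${\eta^\prime}^2 L\tilde{\sigma}^2/p$ term and yield $\tilde{\mathcal{O}}(L\tilde{\sigma}^2/(p\mu^2 R^2))$. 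Taking the worst of the three regimes reproduces the three-term bound in the statement.

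I expect the main obstacle to be the tuning step: although it is mechanical, it requires a careful case split to ensure that the exponential prefactor comes out as $r_0 s\exp(-\mu(R+1)/s)$ in the deterministic regime while still absorbing the logarithmic factors from the noise regimes into the constants hidden in $\mathcal{O}$. The algebraic manipulations (telescoping, slow-increasing check, geometric sum) are routine, but the constants arising from $C$ above, from the admissibility constraint $\eta^\prime \lesssim p/\mu$, and from the Lemma \ref{lemma:recursion_convex} upper bound all enter the definition of $s$, so some bookkeeping is needed to present $s$ cleanly in the final statement.
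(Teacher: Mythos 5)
Your proposal follows essentially the same route as the paper's proof: verify that $w^{(r)}=(1-\mu\eta^\prime/2)^{-(r+1)}$ is $\tfrac{4}{p}$-slow increasing under $\eta^\prime\le \tfrac{2p}{\mu(8+p)}$, telescope the weighted recursion of Lemma \ref{lemma:recursion_convex}, bound $\tfrac{w^{(R)}}{\eta^\prime W_R}\ge \tfrac{\mu}{2}$ and $\tfrac{1}{W_R}\le (1-\mu\eta^\prime/2)^{R+1}$, and finish with the standard step-size tuning of Stich/Koloskova et al. One small slip: from $\tfrac{1}{\eta^\prime W_R}=\tfrac{\mu\rho^{R+1}/2}{1-\rho^{R+1}}$ you cannot drop the factor $\tfrac{1}{1-\rho^{R+1}}\ge 1$ to get $\tfrac{\mu r_0}{2}e^{-\mu\eta^\prime(R+1)/2}$; using $1-\rho^{R+1}\ge 1-\rho=\mu\eta^\prime/2$ gives the correct prefactor $\tfrac{r_0}{\eta^\prime}e^{-\mu\eta^\prime(R+1)/2}$ (as in the paper), which is in fact what your final tuning step needs to produce the $r_0 s\exp(-\mu(R+1)/s)$ term.
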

\begin{proof}
We define $w^{(r)} \coloneqq (1 - \frac{\mu \eta^\prime}{2})^{-(r+1)}$.
Suppose $\eta^\prime \leq \min \{ \frac{2p}{\mu(8+p)}, \frac{2}{\mu} \}$.
Then, we have
\begin{align*}
    \frac{1}{(1 - \frac{\mu \eta^\prime}{2})} &\leq 1 + \frac{p}{8}, \\
    1 - \frac{\mu \eta^\prime}{2} &> 0.
\end{align*}
Therefore, $\{ w^{(r)} \}_r$ is $\frac{4}{p}$-slow increasing.
From Lemma \ref{lemma:recursion_convex}, substituting $w^{(r)} \coloneqq (1 - \frac{\mu \eta^\prime}{2})^{-(r+1)}$,
we get
\begin{align*}
    &\frac{1}{2 W_R} \sum_{r=0}^{R} w^{(r)} (\mathbb{E}f(\bar{\mathbf{x}}^{(r)}) - f (\mathbf{x}^\star)) \\
    &\leq \frac{1}{\eta^\prime W_R} \left( \left\| \bar{\mathbf{x}}^{(0)} - \mathbf{x}^\star \right\|^2
    \!\!\!
    - w^{(R)} \mathbb{E} \left\| \bar{\mathbf{x}}^{(R+1)} - \mathbf{x}^\star \right\|^2 \right) 
    + {\eta^\prime} \frac{\sigma^2}{n} 
    + {\eta^\prime}^2 \left( 12 + \frac{432 b^\prime}{p^2} \right) \frac{L \sigma^2}{p}.
\end{align*}
Unrolling the above equation, we get
\begin{align*}
    &\frac{1}{2 W_R} \sum_{r=0}^{R} w^{(r)} (\mathbb{E}f(\bar{\mathbf{x}}^{(r)}) - f (\mathbf{x}^\star)) 
    + \frac{w^{(R)}}{\eta^\prime W_R} \mathbb{E} \left\| \bar{\mathbf{x}}^{(R+1)} - \mathbf{x}^\star \right\|^2 \\
    &\leq \frac{ \left\| \bar{\mathbf{x}}^{(0)} - \mathbf{x}^\star \right\|^2 }{\eta^\prime W_R}
    + {\eta^\prime} \frac{\sigma^2}{n} 
    + {\eta^\prime}^2 \left( 12 + \frac{432 b^\prime}{p^2} \right) \frac{L \sigma^2}{p}.
\end{align*}
Using the followings:
\begin{align*}
    \frac{1}{W_R} &\leq (1 - \frac{\mu \eta^\prime}{2})^{(R+1)} \leq \exp \{- \frac{\mu \eta^\prime}{2} (R+1) \}, \\
    W_R &= (1 - \frac{\mu \eta^\prime}{2})^{-(R+1)} \sum_{r=0}^{R} (1 - \frac{\mu \eta^\prime}{2})^{r} \leq \frac{2 w^{(R)}}{ \mu \eta^\prime},
\end{align*}
we get
\begin{align*}
    &\frac{1}{2 W_R} \sum_{r=0}^{R} w^{(r)} (\mathbb{E}f(\bar{\mathbf{x}}^{(r)}) - f (\mathbf{x}^\star)) 
    + \frac{\mu}{2} \mathbb{E} \left\| \bar{\mathbf{x}}^{(R+1)} - \mathbf{x}^\star \right\|^2  \\
    &\leq \frac{1}{\eta^\prime} \left\| \bar{\mathbf{x}}^{(0)} - \mathbf{x}^\star \right\|^2 \exp \left( \frac{- \mu \eta^\prime}{2} (R+1) \right)
    + {\eta^\prime} \frac{\sigma^2}{n} 
    + {\eta^\prime}^2 \left( 12 + \frac{432 b^\prime}{p^2} \right) \frac{L \sigma^2}{p}.
\end{align*}
Then, by tuning $\eta^\prime$ as in Lemma 15 in \citep{koloskova2020unified} and Lemma 2 in \citep{stich2019unified},
we can get the statement.
\end{proof}

\begin{lemma}[Convergence Rate for General Convex Cases]
Suppose that Assumptions \ref{assumption:mixing_matrix}, \ref{assumption:smoothness}, \ref{assumption:stochastic_gradient} and \ref{assumption:mu_convexity} hold with $\mu=0$,
and $\{ \alpha_{i|j} \}_{ij}$ is set such that $\alpha_{i|j} = \alpha_{j|i} \geq 0$ for all $(i,j) \in \mathcal{E}$.
There exists a step size $\eta^\prime < \frac{1}{d}$ such that it holds that
\begin{align*}
    &\frac{1}{2 (R+1)} \sum_{r=0}^{R} (\mathbb{E}f(\bar{\mathbf{x}}^{(r)}) - f (\mathbf{x}^\star)) \\
    &\leq \mathcal{O} \left( 
    \left( \frac{\sigma^2 \| \bar{\mathbf{x}}^{(0)} - \mathbf{x}^\star \|^2 }{n(R+1)} \right)^{\frac{1}{2}}
    + \left( \frac{(1 + \frac{b^\prime}{p^2})L \sigma^2}{p} \right)^{\frac{1}{3}} \left( \frac{\| \bar{\mathbf{x}}^{(0)} - \mathbf{x}^\star \|^2}{R+1} \right)^{\frac{2}{3}}
    + \frac{d \| \bar{\mathbf{x}}^{(0)} - \mathbf{x}^\star \|^2}{R+1}
    \right).
\end{align*}
\end{lemma}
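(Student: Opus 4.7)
The plan is to specialize Lemma \ref{lemma:recursion_convex} to the general convex regime ($\mu = 0$) with uniform weights $w^{(r)} \equiv 1$, and then tune the step size $\eta'$. Since any constant sequence is trivially $\frac{4}{p}$-slow increasing, the weight hypothesis of Lemma \ref{lemma:recursion_convex} is satisfied; moreover, the step-size condition required there is of the form $\eta' \leq 1/d$ for a constant $d$ depending only on $L$, $p$, $b$ and $b'$, matching the constraint stated in the claim. With $\mu = 0$ one has $(1 - \eta' \mu / 2) = 1$, and $W_R = R+1$, so the bracketed expression on the right-hand side of Lemma \ref{lemma:recursion_convex} reduces to the telescoping difference $\mathbb{E}\|\bar{\mathbf{x}}^{(r)} - \mathbf{x}^\star\|^2 - \mathbb{E}\|\bar{\mathbf{x}}^{(r+1)} - \mathbf{x}^\star\|^2$.

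Summing and dropping the nonnegative terminal term $\mathbb{E}\|\bar{\mathbf{x}}^{(R+1)} - \mathbf{x}^\star\|^2$ then yields, after division by $W_R = R+1$, a bound of the form
\begin{align*}
\frac{1}{2(R+1)} \sum_{r=0}^{R} (\mathbb{E} f(\bar{\mathbf{x}}^{(r)}) - f^\star)
\leq \frac{A}{\eta'} + B\, \eta' + C\, {\eta'}^{2},
\end{align*}
with $A = \|\bar{\mathbf{x}}^{(0)} - \mathbf{x}^\star\|^2 / (R+1)$, $B = \sigma^2/n$, and $C = \bigl(12 + 432 b'/p^2\bigr) L\sigma^2/p$. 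This three-term structure (one $1/\eta'$ bias term, one $\eta'$ stochastic term, and one ${\eta'}^2$ topology-noise term) is exactly the form that the standard step-size tuning lemma is designed to handle.

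The final step is to apply the tuning lemma (Lemma 15 of \citet{koloskova2020unified}, or Lemma 2 of \citet{stich2019unified}) over the admissible range $\eta' \in (0, 1/d]$. Balancing $A/\eta'$ against $B\eta'$ yields $\mathcal{O}(\sqrt{AB}) = \mathcal{O}(\sqrt{\sigma^2 r_0 / (nR)})$; balancing $A/\eta'$ against $C {\eta'}^2$ yields $\mathcal{O}((A^2 C)^{1/3})$, matching the second term with $\tilde{\sigma}^2 = (1 + b'/p^2)\sigma^2$; and when the unconstrained minimizer exceeds $1/d$, one must take $\eta' = 1/d$, giving the residual bias term $\mathcal{O}(dA) = \mathcal{O}(s r_0 / R)$ in the notation of the theorem.

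The main obstacle is essentially bookkeeping rather than conceptual: one has to verify that the three pieces of the step-size constraint (the two appearing in Lemma \ref{lemma:simplified_consensus_convex} and the $1/(12L)$ from the descent lemma) collapse into a single $1/d$ of the right form, and then invoke the tuning lemma with precisely the coefficients $A$, $B$, $C$ above. All the heavy lifting—controlling the consensus distance $\Xi^{(r)}$ jointly with the gradient-tracking error $\mathcal{E}^{(r)}$—was already absorbed into Lemma \ref{lemma:recursion_convex}, so no new recursion or Lyapunov argument is needed here.
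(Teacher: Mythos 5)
Your proposal matches the paper's proof exactly: substitute $w^{(r)}\coloneqq 1$ into Lemma \ref{lemma:recursion_convex}, telescope the distance terms, and apply the standard step-size tuning lemma to the resulting $A/\eta' + B\eta' + C{\eta'}^2$ bound (the paper invokes Lemma 17 of \citet{koloskova2020unified} here, not Lemma 15, which it reserves for the strongly convex case). The coefficients $A$, $B$, $C$ you identify are precisely those in the paper, so the argument is correct and essentially identical.
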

\begin{proof}
From Lemma \ref{lemma:recursion_convex}, by substituting $w^{(r)} \coloneqq 1$, we get
\begin{align*}
    \frac{1}{2 (R+1)} \sum_{r=0}^{R} (\mathbb{E}f(\bar{\mathbf{x}}^{(r)}) - f (\mathbf{x}^\star)) 
    &\leq \frac{1}{\eta^\prime (R+1) }  \left\| \bar{\mathbf{x}}^{(0)} - \mathbf{x}^\star \right\|^2 
    + {\eta^\prime} \frac{\sigma^2}{n}
    + {\eta^\prime}^2 \left( 12 + \frac{432 b^\prime}{p^2} \right) \frac{L \sigma^2}{p}.
\end{align*}
Then, using Lemma 17 in \citep{koloskova2020unified}, we can get the statement.
\end{proof}

\subsection{Convergence Analysis for Non-convex Case}
\label{sec:convergence_analysis_for_nonconvex}
\subsubsection{Additional Notation}
In Sec. \ref{sec:convergence_analysis_for_nonconvex}, we define $\Xi^{(r)}, \mathcal{E}^{(r)}$, $b$, and $b^\prime$ as follows to simplify the notation:
\begin{gather*}
    \Xi^{(r)} \coloneqq \frac{1}{n} \mathbb{E} \sum_{i=1}^{n} \left\| \mathbf{x}_i^{(r)} - \bar{\mathbf{x}}^{(r)} \right\|^2,
    \;\;
    \mathcal{E}^{(r)} \coloneqq \frac{1}{n} \mathbb{E} \left\| \nabla f(\bar{\mathbf{X}}^{(r)}) - \mathbf{C}^{(r)} - \frac{1}{n} \nabla f(\bar{\mathbf{X}}^{(r)}) \mathbf{1}\mathbf{1}^\top \right\|^2_F,
    \;\; \\
    b \coloneqq \left\| \frac{1}{2} (\mathbf{D} - \mathbf{E}) \right\|^2_F, \;\;
    b^\prime \coloneqq \left\| \mathbf{W} - \mathbf{I} \right\|^2_F.
\end{gather*}

\subsubsection{Convergence Analysis}
\begin{lemma}[Descent Lemma for Non-convex Case]
\label{lemma:descent_nonconvex}
Suppose that Assumptions \ref{assumption:mixing_matrix}, \ref{assumption:smoothness}, and \ref{assumption:stochastic_gradient} hold,
and $\{ \alpha_{i|j} \}_{ij}$ is set such that $\alpha_{i|j}=\alpha_{j|i} \geq 0$ for all $(i,j) \in \mathcal{E}$.
If $\eta^\prime \leq \frac{1}{4 L}$, it holds that
\begin{align*}
    \mathbb{E}_{r+1} f(\bar{\mathbf{x}}^{(r+1)}) \leq f(\bar{\mathbf{x}}^{(r)}) - \frac{\eta^\prime}{4} \left\| \nabla f(\bar{\mathbf{x}}^{(r)}) \right\|^2
    + \frac{L^2 \eta^\prime}{n} \sum_{i=1}^{n} \left\| \mathbf{x}_i^{(r)} - \bar{\mathbf{x}}^{(r)} \right\|^2
    + \frac{L\sigma^2 {\eta^\prime}^2}{2n}.
\end{align*}
\end{lemma}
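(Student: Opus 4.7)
The plan is to follow the standard descent-lemma template for non-convex decentralized SGD, leveraging that Lemma \ref{lemma:c_is_zero} already reduces the dynamics of $\bar{\mathbf{x}}^{(r)}$ to an SGD-style recursion. First I would apply $L$-smoothness of $f$ (which follows from Assumption \ref{assumption:smoothness}) to obtain
\begin{equation*}
    f(\bar{\mathbf{x}}^{(r+1)}) \leq f(\bar{\mathbf{x}}^{(r)}) + \langle \nabla f(\bar{\mathbf{x}}^{(r)}), \bar{\mathbf{x}}^{(r+1)} - \bar{\mathbf{x}}^{(r)} \rangle + \tfrac{L}{2} \| \bar{\mathbf{x}}^{(r+1)} - \bar{\mathbf{x}}^{(r)} \|^2 ,
\end{equation*}
and then substitute $\bar{\mathbf{x}}^{(r+1)} - \bar{\mathbf{x}}^{(r)} = -\tfrac{\eta^\prime}{n} \sum_{i=1}^n \nabla F_i(\mathbf{x}_i^{(r)} ; \xi_i^{(r)})$ from Lemma \ref{lemma:c_is_zero}.

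Next I would take the conditional expectation $\mathbb{E}_{r+1}$ over the round-$r$ stochastic samples. The inner-product term becomes $-\eta^\prime \langle \nabla f(\bar{\mathbf{x}}^{(r)}), \mathbf{g}^{(r)} \rangle$ with $\mathbf{g}^{(r)} \coloneqq \tfrac{1}{n}\sum_i \nabla f_i(\mathbf{x}_i^{(r)})$, and the quadratic term splits into $\mathbb{E}_{r+1} \| \bar{\mathbf{x}}^{(r+1)} - \bar{\mathbf{x}}^{(r)} \|^2 \leq {\eta^\prime}^2 \| \mathbf{g}^{(r)} \|^2 + \tfrac{{\eta^\prime}^2 \sigma^2}{n}$ by the unbiased/independent stochastic-gradient decomposition and Assumption \ref{assumption:stochastic_gradient} (the $\tfrac{1}{n}$ comes from independence across the $n$ nodes).

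The core algebraic step is then to combine the cross term with the quadratic term. I would use the identity $-\langle \nabla f(\bar{\mathbf{x}}^{(r)}), \mathbf{g}^{(r)} \rangle \leq -\tfrac{1}{2}\| \nabla f(\bar{\mathbf{x}}^{(r)}) \|^2 + \tfrac{1}{2}\| \nabla f(\bar{\mathbf{x}}^{(r)}) - \mathbf{g}^{(r)} \|^2$ (a consequence of Eq.~\eqref{eq:inner_prod} with $\gamma = 1$) together with the relaxed triangle inequality $\| \mathbf{g}^{(r)} \|^2 \leq 2\| \nabla f(\bar{\mathbf{x}}^{(r)}) \|^2 + 2\| \nabla f(\bar{\mathbf{x}}^{(r)}) - \mathbf{g}^{(r)} \|^2$ from Eq.~\eqref{eq:relaxed_triangle}. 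This produces a coefficient $(-\tfrac{\eta^\prime}{2} + L {\eta^\prime}^2)$ on $\| \nabla f(\bar{\mathbf{x}}^{(r)}) \|^2$ and $(\tfrac{\eta^\prime}{2} + L {\eta^\prime}^2)$ on $\| \nabla f(\bar{\mathbf{x}}^{(r)}) - \mathbf{g}^{(r)} \|^2$. The condition $\eta^\prime \leq \tfrac{1}{4L}$ is exactly what makes $L{\eta^\prime}^2 \leq \tfrac{\eta^\prime}{4}$, yielding the desired $-\tfrac{\eta^\prime}{4}$ coefficient on $\| \nabla f(\bar{\mathbf{x}}^{(r)}) \|^2$ and a coefficient $\leq \eta^\prime$ on the mismatch term.

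Finally, to get the consensus-distance term, I would bound $\| \nabla f(\bar{\mathbf{x}}^{(r)}) - \mathbf{g}^{(r)} \|^2 = \| \tfrac{1}{n}\sum_i [\nabla f_i(\bar{\mathbf{x}}^{(r)}) - \nabla f_i(\mathbf{x}_i^{(r)})] \|^2$ using Jensen's inequality followed by $L$-smoothness of each $f_i$ (Assumption \ref{assumption:smoothness}) to obtain $\tfrac{L^2}{n}\sum_i \| \mathbf{x}_i^{(r)} - \bar{\mathbf{x}}^{(r)} \|^2$. I do not expect any serious obstacles: Lemma \ref{lemma:c_is_zero} already removes the awkward $\mathbf{c}_i^{(r)}$ terms, so the argument reduces to the familiar D-PSGD descent-lemma calculation; the only delicate point is tracking the constants so that the threshold $\tfrac{1}{4L}$ exactly balances the two pieces as above.
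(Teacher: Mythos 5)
Your proposal is correct and follows essentially the same route as the paper's proof: smoothness of $f$ applied to the SGD-style recursion from Lemma \ref{lemma:c_is_zero}, Young's inequality with $\gamma=1$ on the cross term, the variance split with the $1/n$ factor from independence across nodes, and the same final arithmetic in which $\eta' \leq \frac{1}{4L}$ turns $-\frac{\eta'}{2}+L{\eta'}^2$ into $-\frac{\eta'}{4}$ and caps the consensus coefficient at $\frac{L^2\eta'}{n}$. The only cosmetic difference is that you carry the averaged mismatch $\|\nabla f(\bar{\mathbf{x}}^{(r)})-\mathbf{g}^{(r)}\|^2$ and apply Jensen at the end, whereas the paper keeps the per-node sums throughout; both land on the identical bound.
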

\begin{proof}
We have
\begin{align*}
    &\mathbb{E}_{r+1} f(\bar{\mathbf{x}}^{(r+1)}) \\
    &\stackrel{(\ref{eq:average_in_ecl})}{=} \mathbb{E}_{r+1} f(\bar{\mathbf{x}}^{(r)} - \frac{\eta^\prime}{n} \sum_{i=1}^{n} \nabla F_i(\mathbf{x}_i^{(r)} ; \xi^{(r)}_i)) \\
    &\stackrel{(\ref{eq:assumption:smoothness})}{\leq} f (\bar{\mathbf{x}}^{(r)}) 
    \underbrace{- \mathbb{E}_{r+1} \left\langle \nabla f(\bar{\mathbf{x}}^{(r)}), \frac{\eta^\prime}{n} \sum_{i=1}^{n} \nabla F_i (\mathbf{x}_i^{(r)} ; \xi_i^{(r)}) \right\rangle}_{T_1}
    + \frac{L}{2} {\eta^\prime}^2 \underbrace{\mathbb{E}_{r+1} \left\| \frac{1}{n} \sum_{i=1}^{n} \nabla F_i(\mathbf{x}_i^{(r)} ; \xi_i^{(r)} ) \right\|^2}_{T_2}.
\end{align*}
Then, we can estimate $T_1$ as follows:
\begin{align*}
    T_1 &= - \frac{\eta^\prime}{n} \sum_{i=1}^{n}  \left\langle \nabla f(\bar{\mathbf{x}}^{(r)}), \nabla f_i (\mathbf{x}_i^{(r)}) \right\rangle \\
    &= - \frac{\eta^\prime}{n} \sum_{i=1}^{n}  \left\langle \nabla f(\bar{\mathbf{x}}^{(r)}), \nabla f_i (\mathbf{x}_i^{(r)}) - \nabla f_i (\bar{\mathbf{x}}^{(r)}) + \nabla f_i (\bar{\mathbf{x}}^{(r)}) \right\rangle \\
    &= - {\eta^\prime} \left\| \nabla f(\bar{\mathbf{x}}^{(r)}) \right\|^2 
    + \frac{\eta^\prime}{n} \sum_{i=1}^{n}  \left\langle \nabla f(\bar{\mathbf{x}}^{(r)}),  \nabla f_i (\bar{\mathbf{x}}^{(r)}) - \nabla f_i (\mathbf{x}_i^{(r)}) \right\rangle \\
    &\stackrel{(\ref{eq:inner_prod})}{\leq} - \frac{\eta^\prime}{2} \left\| \nabla f(\bar{\mathbf{x}}^{(r)}) \right\|^2
    + \frac{\eta^\prime}{2n} \sum_{i=1}^{n} \left\| \nabla f_i (\mathbf{x}_i^{(r)}) - \nabla f_i (\bar{\mathbf{x}}^{(r)}) \right\|^2 \\
    &\stackrel{(\ref{eq:assumption:smoothness})}{\leq} - \frac{\eta^\prime}{2} \left\| \nabla f(\bar{\mathbf{x}}^{(r)}) \right\|^2 
    + \frac{L^2 \eta^\prime}{2n} \sum_{i=1}^{n} \left\| \mathbf{x}_i^{(r)} - \bar{\mathbf{x}}^{(r)} \right\|^2.
\end{align*}
Then, we can estimate $T_2$ as follows:
\begin{align*}
    T_2 
    &= \mathbb{E}_{r+1} \left\| \frac{1}{n} \sum_{i=1}^{n} \nabla F_i(\mathbf{x}_i^{(r)} ; \xi_i^{(r)} ) \right\|^2 \\
    &\leq \left\| \frac{1}{n} \sum_{i=1}^{n} \nabla f_i(\mathbf{x}_i^{(r)}) \right\|^2 
    + \mathbb{E}_{r+1} \left\| \frac{1}{n} \sum_{i=1}^{n} ( \nabla F_i(\mathbf{x}_i^{(r)} ; \xi_i^{(r)} ) - \nabla f_i(\mathbf{x}_i^{(r)}) )\right\|^2 \\
    &\stackrel{(\ref{eq:assumption:stochastic_gradient})}{\leq} \left\| \frac{1}{n} \sum_{i=1}^{n} \nabla f_i(\mathbf{x}_i^{(r)}) \right\|^2 + \frac{\sigma^2}{n} \\
    &= \left\| \frac{1}{n} \sum_{i=1}^{n} \nabla f_i(\mathbf{x}_i^{(r)}) - \nabla f_i(\bar{\mathbf{x}}^{(r)}) + \nabla f_i(\bar{\mathbf{x}}^{(r)}) \right\|^2 + \frac{\sigma^2}{n} \\
    &\stackrel{(\ref{eq:sum_of_n_vectors})}{\leq} 2 \left\| \frac{1}{n} \sum_{i=1}^{n} \nabla f_i(\mathbf{x}_i^{(r)}) - \nabla f_i(\bar{\mathbf{x}}^{(r)}) \right\|^2 + 
    2 \left\| \nabla f(\bar{\mathbf{x}}^{(r)}) \right\|^2 + \frac{\sigma^2}{n} \\
    &\stackrel{(\ref{eq:assumption:smoothness})}{\leq} \frac{2 L^2}{n} \sum_{i=1}^{n} \left\| \mathbf{x}_i^{(r)} - \bar{\mathbf{x}}^{(r)} \right\|^2 + 
    2 \left\| \nabla f(\bar{\mathbf{x}}^{(r)}) \right\|^2 + \frac{\sigma^2}{n}.
\end{align*}
Combining the above equations, we get
\begin{align*}
    &\mathbb{E}_{r+1} f(\bar{\mathbf{x}}^{(r+1)})  \\
    &\leq 
    f (\bar{\mathbf{x}}^{(r)}) 
    - \left( \frac{\eta^\prime}{2} - L {\eta^\prime}^2 \right) \left\| \nabla f(\bar{\mathbf{x}}^{(r)}) \right\|^2 
    + \left( \frac{L^2 \eta^\prime}{2n} + \frac{L^3 {\eta^\prime}^2}{n} \right) \sum_{i=1}^{n} \left\| \mathbf{x}^{(r)} - \bar{\mathbf{x}}^{(r)} \right\|^2
    + \frac{L \sigma^2}{2 n} {\eta^\prime}^2.
\end{align*}
Using that $\eta^\prime \leq \frac{1}{4L}$, we get the statement.
\end{proof}

\begin{lemma}[Recursion for Consensus Distance]
\label{lemma:consensus_nonconvex}
Suppose that Assumptions \ref{assumption:mixing_matrix}, \ref{assumption:smoothness}, and \ref{assumption:stochastic_gradient} hold,
and $\{ \alpha_{i|j} \}_{ij}$ is set such that $\alpha_{i|j}=\alpha_{j|i} \geq 0$ for all $(i,j) \in \mathcal{E}$.
Then, it holds that
\begin{align*}
    \Xi^{(r+1)}
    \leq (1 - \frac{p}{2}) \Xi^{(r)} 
    + \frac{6L^2}{p} {\eta^\prime}^2  \Xi^{(r)}  
    + \frac{6}{p} {\eta^\prime}^2 \mathbb{E} \| \nabla f(\bar{\mathbf{x}}^{(r)}) \|^2
    + \frac{6}{p} {\eta^\prime}^2 \mathcal{E}^{(r)} 
    + {\eta^\prime}^2 \sigma^2.
\end{align*}
\end{lemma}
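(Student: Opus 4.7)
The plan is to follow the same template as the proof of Lemma~\ref{lemma:consensus_convex}, replacing the role of $\mathbf{X}^\star$ by $\bar{\mathbf{X}}^{(r)}$, and to exploit an orthogonality afforded by Lemma~\ref{lemma:c_is_zero} to keep the constants tight. First I would observe, using that the centroid minimizes the squared distance to a shifted target, that
\begin{align*}
n\,\Xi^{(r+1)} \;=\; \mathbb{E}\bigl\|\mathbf{X}^{(r+1)} - \bar{\mathbf{X}}^{(r+1)}\bigr\|_F^2 \;\le\; \mathbb{E}\bigl\|\mathbf{X}^{(r+1)} - \bar{\mathbf{X}}^{(r)}\bigr\|_F^2.
\end{align*}
Substituting the matrix update~\eqref{eq:matrix_stochastic_ecl_1} rewrites the right hand side as $\mathbb{E}\|(\mathbf{X}^{(r)}\mathbf{W} - \bar{\mathbf{X}}^{(r)}) - \eta^\prime(\nabla F(\mathbf{X}^{(r)};\xi^{(r)}) - \mathbf{C}^{(r)})\|_F^2$. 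Splitting the expectation into deterministic mean plus stochastic noise and invoking Assumption~\ref{assumption:stochastic_gradient} peels off an ${\eta^\prime}^2 n \sigma^2$ term and leaves the deterministic quantity $\|\mathbf{X}^{(r)}\mathbf{W} - \bar{\mathbf{X}}^{(r)} - \eta^\prime(\nabla f(\mathbf{X}^{(r)}) - \mathbf{C}^{(r)})\|_F^2$ to control.

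Next I would apply the relaxed triangle inequality~\eqref{eq:relaxed_triangle} with $\gamma = p/2$ to separate the consensus part from the correction part, then use Assumption~\ref{assumption:mixing_matrix} on the consensus term. Since $p \in (0,1]$ one has $(1+p/2)(1-p) \le 1-p/2$ and $1+2/p \le 3/p$, yielding
\begin{align*}
n\,\Xi^{(r+1)} \;\le\; \bigl(1 - \tfrac{p}{2}\bigr)\, n\,\Xi^{(r)} \;+\; \tfrac{3}{p}\,{\eta^\prime}^2\,\bigl\|\nabla f(\mathbf{X}^{(r)}) - \mathbf{C}^{(r)}\bigr\|_F^2 \;+\; {\eta^\prime}^2\, n\,\sigma^2.
\end{align*}
Inserting $\pm\nabla f(\bar{\mathbf{X}}^{(r)})$, applying Young with constant $2$, and using $L$-smoothness bound the remaining Frobenius term by $2L^2\|\mathbf{X}^{(r)} - \bar{\mathbf{X}}^{(r)}\|_F^2 + 2\|\nabla f(\bar{\mathbf{X}}^{(r)}) - \mathbf{C}^{(r)}\|_F^2$.

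The step I expect to be the crux, and the only place the argument departs from a mechanical translation of Lemma~\ref{lemma:consensus_convex}, is the decomposition of $\|\nabla f(\bar{\mathbf{X}}^{(r)}) - \mathbf{C}^{(r)}\|_F^2$ into the terms appearing in $\mathcal{E}^{(r)}$ and $\|\nabla f(\bar{\mathbf{x}}^{(r)})\|^2$. Set $\mathbf{N} \coloneqq \tfrac{1}{n}\nabla f(\bar{\mathbf{X}}^{(r)})\mathbf{1}\mathbf{1}^\top$, whose every column equals $\nabla f(\bar{\mathbf{x}}^{(r)})$, and $\mathbf{M} \coloneqq \nabla f(\bar{\mathbf{X}}^{(r)}) - \mathbf{C}^{(r)}$. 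Since Lemma~\ref{lemma:c_is_zero} supplies $\mathbf{C}^{(r)}\mathbf{1} = \mathbf{0}$, a direct calculation gives $\mathbf{M}\mathbf{1} = \mathbf{N}\mathbf{1} = n\nabla f(\bar{\mathbf{x}}^{(r)})$, so $\mathbf{M}-\mathbf{N}$ has zero column sums while $\mathbf{N}$ has identical columns; the two matrices are therefore Frobenius-orthogonal and Pythagoras yields $\|\mathbf{M}\|_F^2 = n\,\mathcal{E}^{(r)} + n\|\nabla f(\bar{\mathbf{x}}^{(r)})\|^2$. A cruder triangle bound $\|a+b\|^2 \le 3\|a\|^2 + \cdots$ would inflate the coefficient to $9/p$ rather than the claimed $6/p$, so this orthogonality is essential. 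Substituting back into the chain above, dividing by $n$, and taking the outer expectation delivers the stated recursion.
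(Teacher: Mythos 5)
Your proposal matches the paper's proof essentially step for step: the same reduction $n\Xi^{(r+1)}\le\mathbb{E}\|\mathbf{X}^{(r+1)}-\bar{\mathbf{X}}^{(r)}\|_F^2$, the same peeling of the ${\eta^\prime}^2 n\sigma^2$ noise term, the same relaxed triangle inequality with $\gamma=p/2$ combined with Assumption \ref{assumption:mixing_matrix}, the same Young-with-constant-$2$ split around $\nabla f(\bar{\mathbf{X}}^{(r)})$, and the same use of $\sum_i\mathbf{c}_i^{(r)}=\mathbf{0}$ to decompose $\|\nabla f(\bar{\mathbf{X}}^{(r)})-\mathbf{C}^{(r)}\|_F^2$ exactly into $n\mathcal{E}^{(r)}+n\|\nabla f(\bar{\mathbf{x}}^{(r)})\|^2$ (the paper writes this as the identity $\sum_i\|\mathbf{a}_i-\bar{\mathbf{a}}\|^2=\sum_i\|\mathbf{a}_i\|^2-n\|\bar{\mathbf{a}}\|^2$, which is your Frobenius--Pythagoras observation in different clothing). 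The proof is correct and your remark that a crude triangle bound would inflate $6/p$ to $9/p$ is exactly the point of using the equality there.
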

\begin{proof}
As in Lemma \ref{lemma:consensus_convex}, we get
\begin{align*}
    n \Xi^{(r+1)} \leq \mathbb{E} \left\| \mathbf{X}^{(r+1)} - \bar{\mathbf{X}}^{(r)} \right\|^2_F.
\end{align*}
Then, we can estimate as follows:
\begin{align*}
    &\mathbb{E}_{r+1} \left\| \mathbf{X}^{(r+1)} - \bar{\mathbf{X}}^{(r)} \right\|^2_F \\
    &= \mathbb{E}_{r+1} \left\| \mathbf{X}^{(r)} \mathbf{W} - \eta^\prime (\nabla F(\mathbf{X}^{(r)} ; \xi^{(r)}) - \mathbf{C}^{(r)}) - \bar{\mathbf{X}}^{(r)} \right\|^2_F \\
    &\stackrel{(\ref{eq:assumption:stochastic_gradient})}{\leq} \left\| \mathbf{X}^{(r)} \mathbf{W} - \eta^\prime (\nabla f(\mathbf{X}^{(r)}) - \mathbf{C}^{(r)}) - \bar{\mathbf{X}}^{(r)} \right\|^2_F
    + {\eta^\prime}^2 n \sigma^2 \\
    &\stackrel{(\ref{eq:relaxed_triangle})}{\leq} (1 + \gamma) \left\| \mathbf{X}^{(r)} \mathbf{W} - \bar{\mathbf{X}}^{(r)} \right\|^2_F
    + (1 + \gamma^{-1}) {\eta^\prime}^2 \left\| \nabla f(\mathbf{X}^{(r)}) - \mathbf{C}^{(r)} \right\|^2_F
    + {\eta^\prime}^2 n \sigma^2 \\
    &\stackrel{(\ref{eq:assumption:mixing_matrix})}{\leq} (1 + \gamma) (1 - p) \left\| \mathbf{X}^{(r)} - \bar{\mathbf{X}}^{(r)} \right\|^2_F
    + (1 + \gamma^{-1}) {\eta^\prime}^2 \left\| \nabla f(\mathbf{X}^{(r)}) - \mathbf{C}^{(r)} \right\|^2_F
    + {\eta^\prime}^2 n \sigma^2.
\end{align*}
By substituting $\gamma=\frac{p}{2}$, we get
\begin{align*}
    &\mathbb{E}_{r+1} \left\| \mathbf{X}^{(r+1)} - \bar{\mathbf{X}}^{(r)} \right\|^2_F \\
    &\leq (1 - \frac{p}{2}) \left\| \mathbf{X}^{(r)} - \bar{\mathbf{X}}^{(r)} \right\|^2_F
    + \frac{3}{p} {\eta^\prime}^2 \left\| \nabla f(\mathbf{X}^{(r)}) - \mathbf{C}^{(r)} \right\|^2_F
    + {\eta^\prime}^2 n \sigma^2 \\
    &= (1 - \frac{p}{2}) \left\| \mathbf{X}^{(r)} - \bar{\mathbf{X}}^{(r)} \right\|^2_F
    + \frac{3}{p} {\eta^\prime}^2 \left\| \nabla f(\mathbf{X}^{(r)}) - \nabla f(\bar{\mathbf{X}}^{(r)}) + \nabla f(\bar{\mathbf{X}}^{(r)}) - \mathbf{C}^{(r)} \right\|^2_F
    + {\eta^\prime}^2 n \sigma^2 \\
    &\stackrel{(\ref{eq:relaxed_triangle})}{\leq} (1 - \frac{p}{2}) \left\| \mathbf{X}^{(r)} - \bar{\mathbf{X}}^{(r)} \right\|^2_F
    + \frac{6}{p} {\eta^\prime}^2 \left\| \nabla f(\mathbf{X}^{(r)}) - \nabla f(\bar{\mathbf{X}}^{(r)}) \right\|^2_F \\ 
    &\qquad + \frac{6}{p} {\eta^\prime}^2 \left\| \nabla f(\bar{\mathbf{X}}^{(r)}) - \mathbf{C}^{(r)} \right\|^2_F 
    + {\eta^\prime}^2 n \sigma^2 \\
    &\stackrel{(\ref{eq:assumption:smoothness})}{\leq} (1 - \frac{p}{2}) \left\| \mathbf{X}^{(r)} - \bar{\mathbf{X}}^{(r)} \right\|^2_F
    + \frac{6 L^2}{p} {\eta^\prime}^2 \left\| \mathbf{X}^{(r)} - \bar{\mathbf{X}}^{(r)} \right\|^2_F \\
    &\qquad + \frac{6}{p} {\eta^\prime}^2 \underbrace{\left\| \nabla f(\bar{\mathbf{X}}^{(r)}) - \mathbf{C}^{(r)} \right\|^2_F}_{T}
    + {\eta^\prime}^2 n \sigma^2.
\end{align*}
From Lemma \ref{lemma:c_is_zero}, we have $\sum_{i=1}^n \mathbf{c}_i^{(r)} = \mathbf{0}$.
Using $\sum_{i=1}^{n} \| \mathbf{a}_i - \bar{\mathbf{a}} \|^2 = \sum_{i=1}^{n} \| \mathbf{a}_i \|^2 - n \| \bar{\mathbf{a}} \|^2$ for any $\mathbf{a}_1, \ldots, \mathbf{a}_n \in \mathbb{R}^d$, we get
\begin{align*}
    T 
    &= \sum_{i=1}^{n} \left\| \nabla f_i(\bar{\mathbf{x}}^{(r)}) - \mathbf{c}_i^{(r)} \right\|^2 \\
    &= \sum_{i=1}^{n} \left\| \nabla f_i(\bar{\mathbf{x}}^{(r)}) - \mathbf{c}_i^{(r)} - \nabla f (\bar{\mathbf{x}}^{(r)}) \right\|^2
    + n \| \nabla f (\bar{\mathbf{x}}^{(r)}) \|^2 \\
    &= \left\| \nabla f(\bar{\mathbf{X}}^{(r)}) - \mathbf{C}^{(r)} - \frac{1}{n} \nabla f (\bar{\mathbf{X}}^{(r)}) \mathbf{1} \mathbf{1}^\top \right\|^2_F
    + n \| \nabla f (\bar{\mathbf{x}}^{(r)}) \|^2.
\end{align*}
Then, we can get the statement.
\end{proof}
\begin{lemma}
\label{lemma:e_nonconvex}
Suppose that Assumptions \ref{assumption:mixing_matrix}, \ref{assumption:smoothness}, and \ref{assumption:stochastic_gradient} hold,
and $\{ \alpha_{i|j} \}_{ij}$ is set such that $\alpha_{i|j}=\alpha_{j|i} \geq 0$ for all $(i,j) \in \mathcal{E}$.
Then, it holds that
\begin{align*}
    \mathcal{E}^{(r+1)} 
    &\leq (1 - \frac{p}{2}) \mathcal{E}^{(r)}
    + \left( \frac{12 L^2 b^\prime}{p} 
    + \frac{12  (1 - p) b}{p} \Xi^{(r)}
    + \frac{48 L^4}{p} {\eta^\prime}^2 \right) \Xi^{(r)} \\ 
    &\qquad + \frac{48 L^2}{p} {\eta^\prime}^2 \left\| \nabla f(\bar{\mathbf{x}}^{(r)}) \right\|^2 
    + \left( \frac{24 L^2 {\eta^\prime}^2}{n p}
    + \frac{12 b^\prime }{p} \right) \sigma^2. 
\end{align*}
\end{lemma}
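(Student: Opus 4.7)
The plan is to mirror the analysis of Lemma \ref{lemma:e_convex}, with the new feature that the reference point $\mathbf{M}^{(r)} \coloneqq \nabla f(\bar{\mathbf{X}}^{(r)}) - \tfrac{1}{n}\nabla f(\bar{\mathbf{X}}^{(r)})\mathbf{1}\mathbf{1}^\top$ inside $\mathcal{E}^{(r)}$ drifts from round to round, so we must absorb the extra term $\mathbf{M}^{(r+1)} - \mathbf{M}^{(r)}$. First I would rewrite the update Eq.~\eqref{eq:matrix_stochastic_ecl_2} as $\mathbf{C}^{(r+1)} = \mathbf{C}^{(r)}\mathbf{W} + \nabla F(\mathbf{X}^{(r)};\xi^{(r)})(\mathbf{I}-\mathbf{W}) + \tfrac{1}{2}\mathbf{X}^{(r)}\mathbf{W}(\mathbf{D}-\mathbf{E})$, and, using the identities $\mathbf{1}^\top\mathbf{W}=\mathbf{1}^\top$, $(\mathbf{D}-\mathbf{E})\mathbf{1}=\mathbf{1}^\top(\mathbf{D}-\mathbf{E})=\mathbf{0}$, $\mathbf{M}^{(r)}\mathbf{1}=\mathbf{0}$, and $\mathbf{C}^{(r)}\mathbf{1}=\mathbf{0}$ (Lemma \ref{lemma:c_is_zero} applies since $\alpha_{i|j}=\alpha_{j|i}$ and $\mathbf{W}$ is doubly stochastic by Assumption \ref{assumption:mixing_matrix}), rearrange to
\begin{align*}
\mathbf{M}^{(r+1)} - \mathbf{C}^{(r+1)}
&= (\mathbf{M}^{(r)} - \mathbf{C}^{(r)})\mathbf{W} + (\mathbf{M}^{(r+1)} - \mathbf{M}^{(r)}) \\
&\quad - (\nabla f(\mathbf{X}^{(r)}) - \nabla f(\bar{\mathbf{X}}^{(r)}))(\mathbf{I}-\mathbf{W})
- \tfrac{1}{2}\mathbf{X}^{(r)}\mathbf{W}(\mathbf{D}-\mathbf{E})
- (\nabla F - \nabla f)(\mathbf{I}-\mathbf{W}),
\end{align*}
so that the leading piece is a right-multiplication by $\mathbf{W}$ of a zero-column-sum matrix and is thus amenable to the mixing inequality.

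Next, I would apply the relaxed triangle inequality \eqref{eq:relaxed_triangle} with $\gamma = p/2$ to peel off the leading $(\mathbf{M}^{(r)} - \mathbf{C}^{(r)})\mathbf{W}$ from the remaining four summands, then invoke Assumption \ref{assumption:mixing_matrix} to obtain $\|(\mathbf{M}^{(r)} - \mathbf{C}^{(r)})\mathbf{W}\|_F^2 \leq (1-p)\|\mathbf{M}^{(r)} - \mathbf{C}^{(r)}\|_F^2$; since $(1+p/2)(1-p) \leq 1 - p/2$, this yields exactly the advertised $(1-p/2)\mathcal{E}^{(r)}$ coefficient. The remaining four summands pick up a combined prefactor $(1+2/p) \cdot 4 \leq 12/p$ from the outer relaxed triangle together with Eq.~\eqref{eq:sum_of_n_vectors}. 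Three of them are bounded exactly as in Lemma \ref{lemma:e_convex}: $\|\tfrac{1}{2}\mathbf{X}^{(r)}\mathbf{W}(\mathbf{D}-\mathbf{E})\|_F^2 \leq (1-p)b\,\|\mathbf{X}^{(r)} - \bar{\mathbf{X}}^{(r)}\|_F^2$ (using $\bar{\mathbf{X}}^{(r)}\mathbf{W}(\mathbf{D}-\mathbf{E})=\mathbf{0}$, the mixing inequality, and $\|\mathbf{A}\mathbf{B}\|_F^2 \leq \|\mathbf{A}\|_F^2\|\mathbf{B}\|_F^2$); $\|(\nabla f(\mathbf{X}^{(r)}) - \nabla f(\bar{\mathbf{X}}^{(r)}))(\mathbf{I}-\mathbf{W})\|_F^2 \leq L^2 b^\prime\,\|\mathbf{X}^{(r)} - \bar{\mathbf{X}}^{(r)}\|_F^2$ by smoothness; and $\mathbb{E}\|(\nabla F - \nabla f)(\mathbf{I}-\mathbf{W})\|_F^2 \leq n b^\prime\sigma^2$ by per-node independence of the stochastic gradient noise.

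The main obstacle, and the genuinely new ingredient compared with Lemma \ref{lemma:e_convex}, is controlling the drift $\mathbb{E}\|\mathbf{M}^{(r+1)} - \mathbf{M}^{(r)}\|_F^2$. Using $\|\mathbf{Y} - \tfrac{1}{n}\mathbf{Y}\mathbf{1}\mathbf{1}^\top\|_F^2 \leq \|\mathbf{Y}\|_F^2$ column-wise together with $L$-smoothness of each $\nabla f_i$, I would reduce this to $L^2 n\,\mathbb{E}\|\bar{\mathbf{x}}^{(r+1)} - \bar{\mathbf{x}}^{(r)}\|^2$, then substitute the average-sequence identity \eqref{eq:average_in_ecl} from Lemma \ref{lemma:c_is_zero} and split off the stochastic noise to obtain $\mathbb{E}\|\bar{\mathbf{x}}^{(r+1)} - \bar{\mathbf{x}}^{(r)}\|^2 \leq {\eta^\prime}^2\bigl(2\mathbb{E}\|\nabla f(\bar{\mathbf{x}}^{(r)})\|^2 + 2L^2\Xi^{(r)} + \sigma^2/n\bigr)$, exactly as in the descent estimate of Lemma \ref{lemma:descent_nonconvex}. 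This produces the ${\eta^\prime}^2$-scaled contributions to the coefficients of $\Xi^{(r)}$, $\mathbb{E}\|\nabla f(\bar{\mathbf{x}}^{(r)})\|^2$, and $\sigma^2/n$ in the stated bound. Finally, dividing the whole inequality by $n$ and collecting constants yields the claim.
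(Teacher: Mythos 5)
Your route is essentially the paper's: the identity you derive for $\mathbf{M}^{(r+1)}-\mathbf{C}^{(r+1)}$ is exactly the paper's rearrangement (the paper just keeps $\nabla f(\bar{\mathbf{X}}^{(r+1)})-\nabla f(\bar{\mathbf{X}}^{(r)})$ and $\tfrac{1}{n}(\nabla f(\bar{\mathbf{X}}^{(r)})-\nabla f(\bar{\mathbf{X}}^{(r+1)}))\mathbf{1}\mathbf{1}^\top$ as two separate summands instead of merging them into $\mathbf{M}^{(r+1)}-\mathbf{M}^{(r)}$, and bounds each by $nL^2\,\mathbb{E}\|\bar{\mathbf{x}}^{(r+1)}-\bar{\mathbf{x}}^{(r)}\|^2$ just as you do), and your handling of the mixing step, the $(\mathbf{D}-\mathbf{E})$ term, the $(\mathbf{I}-\mathbf{W})$ term, and the drift via Eq.~\eqref{eq:average_in_ecl} all coincide with the paper's. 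The one place your bookkeeping does not yield the stated inequality is the stochastic-noise term: by keeping $(\nabla F-\nabla f)(\mathbf{I}-\mathbf{W})$ as one of the four summands inside the relaxed-triangle/Cauchy--Schwarz split, you give it the prefactor $12/p$, so after dividing by $n$ you obtain $\tfrac{12 b^\prime}{p}\sigma^2$ where the lemma claims $b^\prime\sigma^2$; since $p\le 1$ this is strictly weaker than the statement. The paper instead peels this conditionally zero-mean term off \emph{before} applying \eqref{eq:relaxed_triangle}, via $\mathbb{E}_{r+1}\|\mathbf{A}+\mathbf{Z}\|_F^2\le\mathbb{E}_{r+1}\|\mathbf{A}\|_F^2+\mathbb{E}_{r+1}\|\mathbf{Z}\|_F^2$ --- the same device you yourself invoke when bounding $\mathbb{E}\|\bar{\mathbf{x}}^{(r+1)}-\bar{\mathbf{x}}^{(r)}\|^2$ --- so the noise enters with coefficient exactly $nb^\prime\sigma^2$. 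Reordering your steps accordingly recovers the lemma verbatim; the rest of your constants are actually tighter than stated (your merged drift term produces $24L^4/p$, $24L^2/p$, and $12L^2/(np)$ on the ${\eta^\prime}^2$ contributions, half the paper's values, which is harmless for an upper bound).
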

\begin{proof}
We have
\begin{align*}
    & \mathbb{E}_{r+1} \left\| \nabla f(\bar{\mathbf{X}}^{(r+1)}) - \mathbf{C}^{(r+1)} - \frac{1}{n} \nabla f(\bar{\mathbf{X}}^{(r+1)}) \mathbf{1}\mathbf{1}^\top \right\|^2_F \\
    &\stackrel{(\ref{eq:matrix_stochastic_ecl_2})}{=} \mathbb{E}_{r+1} \left\| \nabla f(\bar{\mathbf{X}}^{(r+1)}) - (\mathbf{C}^{(r)} - \nabla F(\mathbf{X}^{(r)} ; \xi^{(r)})) \mathbf{W} \right. \\
    &\qquad \left. - \frac{1}{2} \mathbf{X}^{(r)} \mathbf{W} (\mathbf{D} - \mathbf{E}) - \nabla F(\mathbf{X}^{(r)} ; \xi^{(r)}) - \frac{1}{n} \nabla f (\bar{\mathbf{X}}^{(r+1)}) \mathbf{1} \mathbf{1}^\top \right. \\
    &\qquad \left. + \nabla f(\bar{\mathbf{X}}^{(r)}) (\mathbf{W} - \mathbf{I}) - \nabla f(\bar{\mathbf{X}}^{(r)}) (\mathbf{W} - \mathbf{I})
    + \frac{1}{n} \nabla f(\bar{\mathbf{X}}^{(r)}) \mathbf{1}\mathbf{1}^\top - \frac{1}{n} \nabla f(\bar{\mathbf{X}}^{(r)}) \mathbf{1}\mathbf{1}^\top \right\|^2_F \\
    &\stackrel{(\ref{eq:relaxed_triangle})}{\leq} (1 + \gamma) \left\| 
    (\nabla f(\bar{\mathbf{X}}^{(r)}) - \mathbf{C}^{(r)}) \mathbf{W} - \frac{1}{n} \nabla f(\bar{\mathbf{X}}^{(r)}) \mathbf{1}\mathbf{1}^\top \right\|^2_F \\
    &\qquad + ( 1 + \gamma^{-1}) \mathbb{E}_{r+1} \left\| \nabla f(\bar{\mathbf{X}}^{(r+1)}) - \nabla f(\bar{\mathbf{X}}^{(r)}) 
    + (\nabla F(\mathbf{X}^{(r)} ; \xi^{(r)}) - \nabla f(\bar{\mathbf{X}}^{(r)})) (\mathbf{W} - \mathbf{I}) \right. \\
    &\qquad \left. - \frac{1}{2} \mathbf{X}^{(r)} \mathbf{W} (\mathbf{D} - \mathbf{E})  - \frac{1}{n} \nabla f (\bar{\mathbf{X}}^{(r+1)}) \mathbf{1} \mathbf{1}^\top + \frac{1}{n} \nabla f(\bar{\mathbf{X}}^{(r)}) \mathbf{1}\mathbf{1}^\top \right\|^2_F \\
    &\stackrel{(\ref{eq:assumption:mixing_matrix})}{\leq} (1 + \gamma) (1 - p) \left\| 
    \nabla f(\bar{\mathbf{X}}^{(r)}) - \mathbf{C}^{(r)} - \frac{1}{n} \nabla f(\bar{\mathbf{X}}^{(r)}) \mathbf{1}\mathbf{1}^\top \right\|^2_F \\
    &\qquad + ( 1 + \gamma^{-1}) \mathbb{E}_{r+1} \left\| \nabla f(\bar{\mathbf{X}}^{(r+1)}) - \nabla f(\bar{\mathbf{X}}^{(r)}) 
    + (\nabla F(\mathbf{X}^{(r)} ; \xi^{(r)}) - \nabla f(\bar{\mathbf{X}}^{(r)})) (\mathbf{W} - \mathbf{I}) \right. \\
    &\qquad \left. - \frac{1}{2} \mathbf{X}^{(r)} \mathbf{W} (\mathbf{D} - \mathbf{E})  - \frac{1}{n} \nabla f (\bar{\mathbf{X}}^{(r+1)}) \mathbf{1} \mathbf{1}^\top + \frac{1}{n} \nabla f(\bar{\mathbf{X}}^{(r)}) \mathbf{1}\mathbf{1}^\top \right\|^2_F.
\end{align*}
Substituting $\gamma = \frac{p}{2}$, we get
\begin{align*}
    & \mathbb{E}_{r+1} \left\| \nabla f(\bar{\mathbf{X}}^{(r+1)}) - \mathbf{C}^{(r+1)} - \frac{1}{n} \nabla f(\bar{\mathbf{X}}^{(r+1)}) \mathbf{1}\mathbf{1}^\top \right\|^2_F \\
    &\leq (1 - \frac{p}{2}) \left\| 
    \nabla f(\bar{\mathbf{X}}^{(r)}) - \mathbf{C}^{(r)} - \frac{1}{n} \nabla f(\bar{\mathbf{X}}^{(r)}) \mathbf{1}\mathbf{1}^\top \right\|^2_F \\
    &\qquad + \frac{3}{p} \mathbb{E}_{r+1} \left\| \nabla f(\bar{\mathbf{X}}^{(r+1)}) - \nabla f(\bar{\mathbf{X}}^{(r)}) 
    + (\nabla F(\mathbf{X}^{(r)} ; \xi^{(r)}) - \nabla f(\bar{\mathbf{X}}^{(r)})) (\mathbf{W} - \mathbf{I}) \right. \\
    &\qquad \left. - \frac{1}{2} \mathbf{X}^{(r)} \mathbf{W} (\mathbf{D} - \mathbf{E})  - \frac{1}{n} \nabla f (\bar{\mathbf{X}}^{(r+1)}) \mathbf{1} \mathbf{1}^\top + \frac{1}{n} \nabla f(\bar{\mathbf{X}}^{(r)}) \mathbf{1}\mathbf{1}^\top \right\|^2_F \\
    &\stackrel{(\ref{eq:sum_of_n_vectors})}{\leq} (1 - \frac{p}{2}) \left\| 
    \nabla f(\bar{\mathbf{X}}^{(r)}) - \mathbf{C}^{(r)} - \frac{1}{n} \nabla f(\bar{\mathbf{X}}^{(r)}) \mathbf{1}\mathbf{1}^\top \right\|^2_F \\
    &\qquad + \frac{12}{p} \mathbb{E}_{r+1} \left\| \nabla f(\bar{\mathbf{X}}^{(r+1)}) - \nabla f(\bar{\mathbf{X}}^{(r)}) \right\|^2_F \\
    &\qquad + \frac{12}{p} \mathbb{E}_{r+1} \left\| (\nabla F(\mathbf{X}^{(r)} ; \xi^{(r)}) - \nabla f(\bar{\mathbf{X}}^{(r)})) (\mathbf{W} - \mathbf{I}) \right\|^2_F \\
    &\qquad + \frac{12}{p} \left\| \frac{1}{2} \mathbf{X}^{(r)} \mathbf{W} (\mathbf{D} - \mathbf{E}) \right\|^2_F 
    + \frac{12}{p} \mathbb{E}_{r+1} \left\| \frac{1}{n} \nabla f (\bar{\mathbf{X}}^{(r+1)}) \mathbf{1} \mathbf{1}^\top - \frac{1}{n} \nabla f(\bar{\mathbf{X}}^{(r)}) \mathbf{1}\mathbf{1}^\top \right\|^2_F \\
    &\stackrel{(\ref{eq:assumption:stochastic_gradient})}{\leq} (1 - \frac{p}{2}) \left\| 
    \nabla f(\bar{\mathbf{X}}^{(r)}) - \mathbf{C}^{(r)} - \frac{1}{n} \nabla f(\bar{\mathbf{X}}^{(r)}) \mathbf{1}\mathbf{1}^\top \right\|^2_F \\
    &\qquad + \frac{12}{p} \mathbb{E}_{r+1} \left\| \nabla f(\bar{\mathbf{X}}^{(r+1)}) - \nabla f(\bar{\mathbf{X}}^{(r)}) \right\|^2_F
    + \frac{12 b^\prime}{p} \left\| \nabla f (\mathbf{X}^{(r)}) - \nabla f(\bar{\mathbf{X}}^{(r)}) \right\|^2_F + \frac{12 n b^\prime \sigma^2}{p} \\
    &\qquad + \frac{12}{p} \left\| \frac{1}{2} \mathbf{X}^{(r)} \mathbf{W} (\mathbf{D} - \mathbf{E}) \right\|^2_F 
    + \frac{12}{p} \mathbb{E}_{r+1} \left\| \frac{1}{n} \nabla f (\bar{\mathbf{X}}^{(r+1)}) \mathbf{1} \mathbf{1}^\top - \frac{1}{n} \nabla f(\bar{\mathbf{X}}^{(r)}) \mathbf{1}\mathbf{1}^\top \right\|^2_F.
\end{align*}
Then, using Assumption \ref{assumption:smoothness}, we get
\begin{align*}
    & \mathbb{E}_{r+1} \left\| \nabla f(\bar{\mathbf{X}}^{(r+1)}) - \mathbf{C}^{(r+1)} - \frac{1}{n} \nabla f(\bar{\mathbf{X}}^{(r+1)}) \mathbf{1}\mathbf{1}^\top \right\|^2_F \\
    &\leq (1 - \frac{p}{2}) \left\| 
    \nabla f(\bar{\mathbf{X}}^{(r)}) - \mathbf{C}^{(r)} - \frac{1}{n} \nabla f(\bar{\mathbf{X}}^{(r)}) \mathbf{1}\mathbf{1}^\top \right\|^2_F
    + \frac{12 L^2 b^\prime}{p} \left\| \mathbf{X}^{(r)} - \bar{\mathbf{X}}^{(r)} \right\|^2_F \\ 
    &\qquad + \frac{24 L^2}{p} \underbrace{\mathbb{E}_{r+1} \left\| \bar{\mathbf{X}}^{(r+1)} - \bar{\mathbf{X}}^{(r)} \right\|^2_F}_{T_1}
    + \frac{12}{p} \underbrace{\left\| \frac{1}{2} \mathbf{X}^{(r)} \mathbf{W} (\mathbf{D} - \mathbf{E}) \right\|^2_F}_{T_2} 
    + \frac{12 n b^\prime \sigma^2}{p}. 
\end{align*}
From Lemma \ref{lemma:c_is_zero}, we get
\begin{align*}
    n T_1
    &= \mathbb{E}_{r+1} \left\| \frac{\eta^\prime}{n} \sum_{i=1}^{n} \nabla F_i(\mathbf{x}_i^{(r)} ; \xi_i^{(r)}) \right\|^2 \\
    &\leq \left\| \frac{\eta^\prime}{n} \sum_{i=1}^{n} \nabla f_i(\mathbf{x}_i^{(r)}) \right\|^2 
    + \mathbb{E}_{r+1} \left\| \frac{\eta^\prime}{n} \sum_{i=1}^{n} (\nabla F_i(\mathbf{x}_i^{(r)} ; \xi_i^{(r)}) - \nabla f_i(\mathbf{x}_i^{(r)})) \right\|^2 \\
    &\stackrel{(\ref{eq:assumption:stochastic_gradient})}{\leq} {\eta^\prime}^2 \left\| \frac{1}{n} \sum_{i=1}^{n} \nabla f_i(\mathbf{x}_i^{(r)}) \right\|^2 + \frac{{\eta^\prime}^2 \sigma^2}{n} \\
    &= {\eta^\prime}^2 \left\| \frac{1}{n} \sum_{i=1}^{n} (\nabla f_i(\mathbf{x}_i^{(r)}) - \nabla f_i(\bar{\mathbf{x}}^{(r)}) + \nabla f_i(\bar{\mathbf{x}}^{(r)})) \right\|^2 + \frac{{\eta^\prime}^2 \sigma^2}{n} \\
    &\stackrel{(\ref{eq:sum_of_n_vectors})}{\leq} 2 {\eta^\prime}^2 \left\| \frac{1}{n} \sum_{i=1}^{n} (\nabla f_i(\mathbf{x}_i^{(r)}) - \nabla f_i(\bar{\mathbf{x}}^{(r)})) \right\|^2
    + 2 {\eta^\prime}^2 \left\| \nabla f(\bar{\mathbf{x}}^{(r)}) \right\|^2 + \frac{{\eta^\prime}^2 \sigma^2}{n} \\
    &\stackrel{(\ref{eq:assumption:smoothness})}{\leq}  \frac{2 L^2 {\eta^\prime}^2}{n}  \sum_{i=1}^{n} \left\| \mathbf{x}_i^{(r)} - \bar{\mathbf{x}}^{(r)} \right\|^2
    + 2 {\eta^\prime}^2 \left\| \nabla f(\bar{\mathbf{x}}^{(r)}) \right\|^2 + \frac{{\eta^\prime}^2 \sigma^2}{n}.
\end{align*}
From the definitions of $\mathbf{E}$ and $\mathbf{D}$, we have
\begin{align*}
    T_2 
    = \left\| \frac{1}{2} (\mathbf{X}^{(r)} \mathbf{W} - \bar{\mathbf{X}}^{(r)}) (\mathbf{D} - \mathbf{E}) \right\|^2_F 
    \leq (1 - p) b \left\| \mathbf{X}^{(r)} - \bar{\mathbf{X}}^{(r)} \right\|^2_F.
\end{align*}
Then, we get the statement.
\end{proof}

\begin{lemma}
\label{lemma:consensus_e_nonconvex}
Suppose that Assumptions \ref{assumption:mixing_matrix}, \ref{assumption:smoothness}, and \ref{assumption:stochastic_gradient} hold,
and $\{ \alpha_{i|j} \}_{ij}$ is set such that $\alpha_{i|j}=\alpha_{j|i} \geq 0$ for all $(i,j) \in \mathcal{E}$.
Then, if $\eta^\prime$ satisfies
\begin{align}
    \label{eq:lemma:consensus_e_nonconvex}
    \eta^\prime \leq \min \left\{ \frac{p}{2 \sqrt{6 (L^2 + \frac{48 L^2 b^\prime + 12 L^2 + 48 (1 - p) b }{p^2})}}, \frac{\sqrt{n p^3}}{24 L}, \frac{1}{4L} \right\},
\end{align}
it holds that
\begin{align*}
    &\Xi^{(r+1)} + \frac{24}{p^2} {\eta^\prime}^2 \mathcal{E}^{(r+1)} \\
    &\leq \left( 1 - \frac{p}{4} \right) \left(  \Xi^{(r)} + \frac{24}{p^2} {\eta^\prime}^2 \mathcal{E}^{(r)} \right)
    + \frac{54}{p} {\eta^\prime}^2 \mathbb{E} \| \nabla f(\bar{\mathbf{x}}^{(r)}) \|^2
    + \left( 2 + \frac{288 b^\prime }{p^3} \right) {\eta^\prime}^2 \sigma^2.
\end{align*}
\end{lemma}
\begin{proof}
From Lemma \ref{lemma:e_nonconvex}, we have
\begin{align*}
    \frac{24}{p^2} \mathcal{E}^{(r+1)} 
    &\leq (1 - \frac{p}{2}) \frac{24}{p^2} \mathcal{E}^{(r)}
    + \frac{24}{p^2} \left( \frac{12 L^2 b^\prime}{p} 
    + \frac{12  (1 - p) b}{p}
    + \frac{48 L^4}{p} {\eta^\prime}^2 \right) \Xi^{(r)} \\ 
    &\qquad + \frac{1152 L^2}{p^3} {\eta^\prime}^2 \left\| \nabla f(\bar{\mathbf{x}}^{(r)}) \right\|^2 
    + \frac{24}{p^2} \left( \frac{24 L^2 {\eta^\prime}^2}{n p}
    + \frac{12 b^\prime }{p} \right) \sigma^2. 
\end{align*}
By using that ${\eta^\prime}^2 \leq \frac{1}{16 L^2}$ and ${\eta^\prime}^2 \leq \frac{n p^3}{576 L^2}$, we get
\begin{align*}
    \frac{24}{p^2} \mathcal{E}^{(r+1)} 
    &\leq (1 - \frac{p}{2}) \frac{24}{p^2} \mathcal{E}^{(r)}
    + \frac{24}{p^2} \left( \frac{12 L^2 b^\prime}{p} 
    + \frac{12  (1 - p) b}{p}
    + \frac{3 L^2}{p} \right) \Xi^{(r)} \\ 
    &\qquad + \frac{1152 L^2}{p^3} {\eta^\prime}^2 \left\| \nabla f(\bar{\mathbf{x}}^{(r)}) \right\|^2 
    + \left( 1 + \frac{288 b^\prime }{p^3} \right) \sigma^2. 
\end{align*}
Because ${\eta^\prime}^2 \leq p^2 / ( 24 (L^2 + \frac{48 L^2 b^\prime + 12 L^2 + 48 (1 - p) b}{p^2}))$ implies that ${\eta^\prime}^2 \leq \frac{p^2}{24 L^2}$, we get
\begin{align*}
    \frac{24}{p^2} \mathcal{E}^{(r+1)} 
    &\leq (1 - \frac{p}{2}) \frac{24}{p^2} \mathcal{E}^{(r)}
    + \frac{24}{p^2} \left( \frac{12 L^2 b^\prime}{p} 
    + \frac{12  (1 - p) b}{p}
    + \frac{3 L^2}{p} \right) \Xi^{(r)} \\ 
    &\qquad + \frac{48}{p} \left\| \nabla f(\bar{\mathbf{x}}^{(r)}) \right\|^2 
    + \left( 1 + \frac{288 b^\prime }{p^3} \right) \sigma^2. 
\end{align*}
Combining this with Lemma \ref{lemma:consensus_nonconvex}, we get
\begin{align*}
    \Xi^{(r+1)} + \frac{24}{p^2} {\eta^\prime}^2 \mathcal{E}^{(r+1)} 
    &\leq \left( (1 - \frac{p}{2}) + \frac{6L^2}{p} {\eta^\prime}^2 + \frac{6}{p^2} \left( \frac{48 L^2 b^\prime}{p} + \frac{48 (1 - p) b}{p} + \frac{12 L^2}{p} \right) {\eta^\prime}^2  \right)  \Xi^{(r)}  \\
    &\qquad + \left( \frac{6}{p} {\eta^\prime}^2 + \left( 1 - \frac{p}{2} \right) \frac{24}{p^2} {\eta^\prime}^2 \right) \mathcal{E}^{(r)} \\
    &\qquad + \left( \frac{6}{p} {\eta^\prime}^2 + \frac{48}{p} {\eta^\prime}^2 \right) \mathbb{E} \| \nabla f(\bar{\mathbf{x}}^{(r)}) \|^2
    + \left( {\eta^\prime}^2 + \left( 1 + \frac{288 b^\prime }{p^3} \right)  {\eta^\prime}^2 \right) \sigma^2.
\end{align*}
Using that ${\eta^\prime}^2 \leq p^2 / ( 24 (L^2 + \frac{48 L^2 b^\prime + 12 L^2 + 48 (1 - p)b}{p^2}))$, we get
\begin{align*}
    &\Xi^{(r+1)} + \frac{24}{p^2} {\eta^\prime}^2 \mathcal{E}^{(r+1)} \\
    &\leq \left( 1 - \frac{p}{4} \right) \left(  \Xi^{(r)} + \frac{24}{p^2} {\eta^\prime}^2 \mathcal{E}^{(r)} \right)
    + \frac{54}{p} {\eta^\prime}^2 \mathbb{E} \| \nabla f(\bar{\mathbf{x}}^{(r)}) \|^2
    + \left( 2 + \frac{288 b^\prime }{p^3} \right) {\eta^\prime}^2 \sigma^2.
\end{align*}
This concludes the proof.
\end{proof}

\begin{lemma}
\label{lemma:recursion_nonconvex}
Suppose that Assumptions \ref{assumption:mixing_matrix}, \ref{assumption:smoothness}, and \ref{assumption:stochastic_gradient} hold,
and $\{ \alpha_{i|j} \}_{ij}$ is set such that $\alpha_{i|j}=\alpha_{j|i} \geq 0$ for all $(i,j) \in \mathcal{E}$.
Then, if $\eta^\prime$ satisfies
\begin{align}
    \label{eq:lemma:simplified_consensus_nonconvex}
    \eta^\prime \leq \min \left\{ \frac{p}{24L \sqrt{3}}, \frac{p}{2 \sqrt{6 (L^2 + \frac{48 L^2 b^\prime + 12 L^2 + 48 (1 - p) b }{p^2})}}, \frac{\sqrt{n p^3}}{24 L}, \frac{1}{4L} \right\},    
\end{align}
we have
\begin{align*}
    \frac{L^2}{R+1} \sum_{r=0}^{R} \Xi^{(r)}
    &\leq \frac{1}{8(R+1)} \sum_{r=0}^{R} \mathbb{E} \| \nabla f(\bar{\mathbf{x}}^{(r)}) \|^2 
    + \left( 8 + \frac{1152 b^\prime}{p^3} \right) \frac{L^2 \sigma^2}{p} {\eta^\prime}^2.
\end{align*}
\end{lemma}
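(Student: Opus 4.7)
The plan is to adapt the convex-case argument of Lemma~\ref{lemma:simplified_consensus_convex} to the non-convex setting, using the Lyapunov potential $\Theta^{(r)} := \Xi^{(r)} + \frac{24}{p^2}{\eta'}^2 \mathcal{E}^{(r)}$ that is implicit in Lemma~\ref{lemma:consensus_e_nonconvex}. That lemma already gives the one-step contraction
$$\Theta^{(r+1)} \leq \left(1 - \tfrac{p}{4}\right)\Theta^{(r)} + \tfrac{54}{p}{\eta'}^2\,\mathbb{E}\|\nabla f(\bar{\mathbf{x}}^{(r)})\|^2 + \left(2 + \tfrac{24b'}{p^2}\right){\eta'}^2 \sigma^2$$
under the step-size requirement~\eqref{eq:lemma:consensus_e_nonconvex}; the tighter condition $\eta' \leq p/(24L\sqrt{3})$ imposed in~\eqref{eq:lemma:simplified_consensus_nonconvex} is precisely what will absorb the residual gradient coefficient at the end.

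First, I would iterate this contraction from $r=0$ to express $\Theta^{(r)}$ as a geometric convolution of the per-step inputs. Using the initialization $\mathbf{x}_i^{(0)}$ shared across nodes, so that $\Xi^{(0)}=0$, and dropping $\Theta^{(0)}$ (see below), this gives
$$\Theta^{(r)} \leq \sum_{j=0}^{r-1}\left(1-\tfrac{p}{4}\right)^{r-1-j}\!\!\left[\tfrac{54}{p}{\eta'}^2\,\mathbb{E}\|\nabla f(\bar{\mathbf{x}}^{(j)})\|^2 + \left(2+\tfrac{24b'}{p^2}\right){\eta'}^2\sigma^2\right].$$
Summing over $r=0,\ldots,R$, swapping the order of the double sum, and collapsing the inner geometric series via $\sum_{k\geq 0}(1-p/4)^k \leq 4/p$, I obtain
$$\sum_{r=0}^{R}\Theta^{(r)} \leq \tfrac{216}{p^2}{\eta'}^2 \sum_{r=0}^{R}\mathbb{E}\|\nabla f(\bar{\mathbf{x}}^{(r)})\|^2 + (R+1)\left(8+\tfrac{96b'}{p^2}\right)\tfrac{{\eta'}^2\sigma^2}{p}.$$
Since $\Xi^{(r)}\leq \Theta^{(r)}$, dividing by $R+1$ and multiplying through by $L^2$ produces exactly the form of the claimed bound, provided the step size is small enough that $216 L^2 {\eta'}^2 / p^2 \leq 1/8$; this is equivalent to $\eta' \leq p/(24L\sqrt{3})$, which is precisely the extra hypothesis in~\eqref{eq:lemma:simplified_consensus_nonconvex}, and the remaining $L^2\sigma^2{\eta'}^2/p$ constants then line up with the target $(8 + 96b'/p^2)$.

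The only delicate point I anticipate is handling $\Theta^{(0)} = \frac{24{\eta'}^2}{p^2}\mathcal{E}^{(0)}$: with $\mathbf{C}^{(0)}=\mathbf{0}$ and a common $\bar{\mathbf{x}}^{(0)}$, this equals $\frac{24{\eta'}^2}{np^2}\sum_i\|\nabla f_i(\bar{\mathbf{x}}^{(0)})-\nabla f(\bar{\mathbf{x}}^{(0)})\|^2$, which does not vanish without a heterogeneity assumption that we explicitly wish to avoid. Its contribution to $\sum_r \Theta^{(r)}$ is at most $\Theta^{(0)}\cdot \frac{4}{p}$, so after dividing by $R+1$ it is an $\mathcal{O}(1/R)$ lower-order term that can either be folded into the descent inequality (using $\mathbb{E}\|\nabla f_i(\bar{\mathbf{x}}^{(0)})\|^2$ to bound it in terms of $r_0$ via $L$-smoothness) or treated as an initialization constant absorbed into the big-$\mathcal{O}$ of Theorem~\ref{theoram:convergence_rate}. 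Aside from this bookkeeping, the proof is purely mechanical: an unrolling, a summation swap, and one matching of constants.
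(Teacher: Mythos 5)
Your proposal follows the paper's proof essentially step for step: the same potential $\Theta^{(r)} = \Xi^{(r)} + \frac{24}{p^2}{\eta'}^2\mathcal{E}^{(r)}$, the same unrolling of the one-step contraction from Lemma \ref{lemma:consensus_e_nonconvex}, the same summation swap with $\sum_{k\geq 0}(1-p/4)^{k} \leq 4/p$ applied twice (giving $54/p \cdot 4/p = 216/p^2$ and $(2+24b'/p^2)\cdot 4/p = (8+96b'/p^2)/p$), and the same final absorption $216 L^2 {\eta'}^2/p^2 \leq 1/8$ via $\eta' \leq p/(24L\sqrt{3})$. The one place you go beyond the paper is the $\Theta^{(0)}$ term: the paper's unrolling silently turns the recursion into an equality that drops $(1-p/4)^{r}\Theta^{(0)}$, i.e., it implicitly treats $\mathcal{E}^{(0)} = \frac{1}{n}\sum_{i}\|\nabla f_i(\bar{\mathbf{x}}^{(0)}) - \nabla f(\bar{\mathbf{x}}^{(0)})\|^2$ as zero, which, as you correctly observe, fails under heterogeneous data even with common initialization and $\mathbf{C}^{(0)}=\mathbf{0}$. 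So your flagged ``delicate point'' is not a gap in your argument but one in the paper's; carrying the extra $\mathcal{O}({\eta'}^2 L^2\mathcal{E}^{(0)}/(p^3(R+1)))$ contribution through, as you propose, is the honest fix, at the cost of an additional (lower-order, heterogeneity-dependent) additive term that the lemma as literally stated omits.
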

\begin{proof}
We define $\Theta^{(r)} \coloneqq \Xi^{(r)} + \frac{24}{p^2} {\eta^\prime}^2 \mathcal{E}^{(r)}$.
From Lemma \ref{lemma:consensus_e_nonconvex}, we get
\begin{align*}
    \Theta^{(r)}
    &\leq \left( 1 - \frac{p}{4} \right) \Theta^{(r-1)}
    + \frac{54}{p} {\eta^\prime}^2 \mathbb{E} \| \nabla f(\bar{\mathbf{x}}^{(r-1)}) \|^2
    + \left( 2 + \frac{288 b^\prime }{p^3} \right) {\eta^\prime}^2 \sigma^2 \\
    &= \frac{54}{p} {\eta^\prime}^2 \sum_{j=0}^{r-1} \left( 1 - \frac{p}{4} \right)^{r-j-1} \mathbb{E} \| \nabla f(\bar{\mathbf{x}}^{(j)}) \|^2
    + \left( 2 + \frac{288 b^\prime }{p^3} \right) {\eta^\prime}^2 \sigma^2 \sum_{j=0}^{r-1} \left( 1 - \frac{p}{4} \right)^{r-j-1} \\
    &\leq \frac{54}{p} {\eta^\prime}^2 \sum_{j=0}^{r-1} \left( 1 - \frac{p}{4} \right)^{r-j-1} \mathbb{E} \| \nabla f(\bar{\mathbf{x}}^{(j)}) \|^2
    + \left( 8 + \frac{1152 b^\prime}{p^3} \right) \frac{\sigma^2}{p} {\eta^\prime}^2,
\end{align*}
for any round $r>0$.
By recursively adding both sides, we get
\begin{align*}
    \sum_{r=1}^{R} \Theta^{(r)}
    &\leq \frac{54}{p} {\eta^\prime}^2 \sum_{r=1}^{R} \sum_{j=0}^{r-1} \left( 1 - \frac{p}{4} \right)^{r-j-1} \mathbb{E} \| \nabla f(\bar{\mathbf{x}}^{(j)}) \|^2
    + \left( 8 +  \frac{1152 b^\prime}{p^3} \right) \frac{\sigma^2}{p} {\eta^\prime}^2 R \\
    &= \frac{54}{p} {\eta^\prime}^2 \sum_{j=0}^{R-1} \mathbb{E} \| \nabla f(\bar{\mathbf{x}}^{(j)}) \|^2 \sum_{r=j+1}^{R} \left( 1 - \frac{p}{4} \right)^{r-j-1}
    + \left( 8 +  \frac{1152 b^\prime}{p^3} \right) \frac{\sigma^2}{p} {\eta^\prime}^2 R \\
    &\leq \frac{216}{p^2} {\eta^\prime}^2 \sum_{j=0}^{R-1} \mathbb{E} \| \nabla f(\bar{\mathbf{x}}^{(j)}) \|^2 
    + \left( 8 +  \frac{1152 b^\prime}{p^3} \right) \frac{\sigma^2}{p} {\eta^\prime}^2 R.
\end{align*}
By using $\Xi^{(r)} \leq \Theta^{(r)}$, we get
\begin{align*}
    \sum_{r=1}^{R} \Xi^{(r)}
    &\leq \frac{216}{p^2} {\eta^\prime}^2 \sum_{j=0}^{R-1} \mathbb{E} \| \nabla f(\bar{\mathbf{x}}^{(j)}) \|^2 
    + \left( 8 + \frac{1152 b^\prime}{p^3} \right) \frac{\sigma^2}{p} {\eta^\prime}^2 R.
\end{align*}
Using $\Xi^{(0)} = 0$, we get
\begin{align*}
    \frac{1}{R+1} \sum_{r=0}^{R} \Xi^{(r)}
    &\leq \frac{216}{p^2 (R+1)} {\eta^\prime}^2 \sum_{r=0}^{R} \mathbb{E} \| \nabla f(\bar{\mathbf{x}}^{(r)}) \|^2 
    + \left( 8 + \frac{1152 b^\prime}{p^3} \right) \frac{\sigma^2}{p} {\eta^\prime}^2.
\end{align*}
By multiplying the above equation by $L^2$, we get
\begin{align*}
    \frac{L^2}{R+1} \sum_{r=0}^{R} \Xi^{(r)}
    &\leq \frac{216 L^2}{p^2 (R+1)} {\eta^\prime}^2 \sum_{r=0}^{R} \mathbb{E} \| \nabla f(\bar{\mathbf{x}}^{(r)}) \|^2 
    + \left( 8 + \frac{1152 b^\prime}{p^3} \right) \frac{L^2 \sigma^2}{p} {\eta^\prime}^2.
\end{align*}
By using ${\eta^\prime}^2 \leq \frac{p^2}{1728 L^2}$, we get
\begin{align*}
    \frac{L^2}{R+1} \sum_{r=0}^{R} \Xi^{(r)}
    &\leq \frac{1}{8(R+1)} \sum_{r=0}^{R} \mathbb{E} \| \nabla f(\bar{\mathbf{x}}^{(r)}) \|^2 
    + \left( 8 + \frac{1152 b^\prime}{p^3} \right) \frac{L^2 \sigma^2}{p} {\eta^\prime}^2.
\end{align*}
This concludes the proof.
\end{proof}

\begin{lemma}[Convergence Rate for Non-convex Case]
Suppose that Assumptions \ref{assumption:mixing_matrix}, \ref{assumption:smoothness}, and \ref{assumption:stochastic_gradient} hold,
and $\{ \alpha_{i|j} \}_{ij}$ is set such that $\alpha_{i|j}=\alpha_{j|i} \geq 0$ for all $(i,j) \in \mathcal{E}$.
Then, there exists $\eta^\prime < \frac{1}{d}$ such that it holds that
\begin{align*}
    \frac{1}{R+1} \sum_{r=0}^{R} \mathbb{E} \left\| \nabla f(\bar{\mathbf{x}}^{(r)}) \right\|^2
    &\leq 
    \mathcal{O} \left( \left( \frac{L \sigma^2 r_0}{n (R+1)} \right)^{\frac{1}{2}} 
    + \left( \frac{(1 + \frac{b^\prime}{ p^3 }) L^2 \sigma^2}{p} \right)^{\frac{1}{3}}
    \left( \frac{r_0}{R+1} \right)^{\frac{2}{3}}
    + \frac{d r_0}{R+1}
    \right),
\end{align*}
where $r_0 \coloneqq f(\bar{\mathbf{x}}^{(0)}) - f^\star$.
\end{lemma}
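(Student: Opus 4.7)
The plan is to combine the descent lemma (Lemma~\ref{lemma:descent_nonconvex}) with the bound on the accumulated consensus distance (Lemma~\ref{lemma:recursion_nonconvex}), and then optimize the step size $\eta'$ in the resulting three-term expression using a standard tuning lemma.

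First I would rearrange Lemma~\ref{lemma:descent_nonconvex} by moving the gradient norm term to the left-hand side, taking total expectation, and averaging the telescoping sum from $r=0$ to $r=R$. This yields
\begin{align*}
    \frac{\eta'}{4(R+1)} \sum_{r=0}^{R} \mathbb{E} \|\nabla f(\bar{\mathbf{x}}^{(r)})\|^2
    &\leq \frac{f(\bar{\mathbf{x}}^{(0)}) - f^\star}{R+1}
    + \frac{\eta'}{R+1} \sum_{r=0}^{R} L^2 \Xi^{(r)}
    + \frac{L \sigma^2 {\eta'}^2}{2 n},
\end{align*}
using that $\mathbb{E} f(\bar{\mathbf{x}}^{(R+1)}) \geq f^\star$ to drop the final iterate on the right.

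Next I would plug in Lemma~\ref{lemma:recursion_nonconvex} to control the consensus term. This introduces a $\tfrac{1}{8(R+1)}\sum_r \mathbb{E}\|\nabla f(\bar{\mathbf{x}}^{(r)})\|^2$ piece on the right-hand side, which (after multiplication by $\eta'$) can be absorbed into the left-hand side since its coefficient is only $\eta'/8 < \eta'/4$. After absorption and dividing through by $\eta'/8$, this gives a clean three-term recursion
\begin{align*}
    \frac{1}{R+1} \sum_{r=0}^{R} \mathbb{E} \|\nabla f(\bar{\mathbf{x}}^{(r)})\|^2
    \leq \frac{c_1 r_0}{\eta' (R+1)} + \frac{c_2 L \sigma^2 \eta'}{n} + \frac{c_3 (1 + b'/p^2) L^2 \sigma^2 {\eta'}^2}{p},
\end{align*}
for absolute constants $c_1, c_2, c_3 > 0$, valid whenever $\eta'$ satisfies the step size restrictions (\ref{eq:lemma:simplified_consensus_nonconvex}) inherited from Lemma~\ref{lemma:recursion_nonconvex}, which in particular imply $\eta' \leq 1/d$ for an appropriate constant $d$ depending on $L, p, b, b'$.

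The final step is to tune $\eta'$. The form of the bound is the standard one appearing in non-convex decentralized optimization, consisting of a $1/\eta'$ term, a linear term, and a quadratic term in $\eta'$. Applying a general step-size tuning lemma (the analogue of Lemma~17 in \cite{koloskova2020unified} for the non-convex setting, which balances three such terms subject to $\eta' \leq 1/d$) yields precisely the claimed rate with the $\sqrt{r_0 L \sigma^2/(nR)}$, $(r_0^2 L^2 \tilde{\sigma}^2/(p R^2))^{1/3}$, and $s r_0/R$ terms, where $\tilde{\sigma}^2 = (1 + b'/p^2)\sigma^2$ and $s$ plays the role of $d$. The main obstacle is purely bookkeeping: verifying that all the step size conditions used across Lemmas~\ref{lemma:descent_nonconvex}, \ref{lemma:consensus_nonconvex}, \ref{lemma:e_nonconvex}, \ref{lemma:consensus_e_nonconvex}, and \ref{lemma:recursion_nonconvex} can be simultaneously captured by a single constraint of the form $\eta' \leq 1/s$ for some $s$ depending polynomially on $L, p^{-1}, b, b'$, and $n^{-1/2}$, so that the tuning lemma applies cleanly.
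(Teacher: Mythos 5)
Your proposal matches the paper's proof essentially step for step: telescope the descent lemma of Lemma \ref{lemma:descent_nonconvex}, substitute Lemma \ref{lemma:recursion_nonconvex} and absorb the resulting $\frac{1}{8(R+1)}\sum_r \mathbb{E}\|\nabla f(\bar{\mathbf{x}}^{(r)})\|^2$ term into the left-hand side, and then tune $\eta^\prime$ with the standard three-term step-size lemma (the paper invokes Lemma 16 of \citet{koloskova2020unified} for this non-convex case). The approach and all key steps are the same.
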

\begin{proof}
From Lemma \ref{lemma:descent_nonconvex}, we have
\begin{align*}
    &\frac{1}{4(R+1)} \sum_{r=0}^{R} \mathbb{E} \left\| \nabla f(\bar{\mathbf{x}}^{(r)}) \right\|^2 \\
    &\leq \frac{1}{\eta^\prime (R+1)} \sum_{r=0}^{R} \left( \mathbb{E} [f(\bar{\mathbf{x}}^{(r)})] - \mathbb{E} [f(\bar{\mathbf{x}}^{(r+1)})] \right)
    + \frac{L^2}{R+1} \sum_{r=0}^{R} \Xi^{(r)}
    + \frac{L\sigma^2 {\eta^\prime}}{2n} \\
    &\leq \frac{f(\bar{\mathbf{x}}^{(0)}) - f^\star}{\eta^\prime (R+1)}
    + \frac{L^2}{R+1} \sum_{r=0}^{R} \Xi^{(r)}
    + \frac{L\sigma^2 {\eta^\prime}}{2n}.
\end{align*}
Using Lemma \ref{lemma:recursion_nonconvex}, we get
\begin{align*}
    &\frac{1}{8(R+1)} \sum_{r=0}^{R} \mathbb{E} \left\| \nabla f(\bar{\mathbf{x}}^{(r)}) \right\|^2 
    \leq \frac{f(\bar{\mathbf{x}}^{(0)}) - f^\star}{\eta^\prime (R+1)}
    + \frac{L\sigma^2}{2n} \eta^\prime
    + \left( 8 + \frac{1152 b^\prime}{p^3} \right) \frac{L^2 \sigma^2}{p} {\eta^\prime}^2.
\end{align*}
Using Lemma 16 in \citep{koloskova2020unified}, we get the statement.
\end{proof}

\section{Limitation of Theorem \ref{theoram:convergence_rate}}
\label{sec:limitation}
In this section, we discuss the limitations of Theorem \ref{theoram:convergence_rate}
and describe why the convergence rates shown in Theorem \ref{theoram:convergence_rate} can not be regarded as that of the ECL.

In Lemmas \ref{lemma:consensus_e_convex}, \ref{lemma:simplified_consensus_convex}, \ref{lemma:recursion_convex},
\ref{lemma:consensus_e_nonconvex}, and \ref{lemma:recursion_nonconvex},
we assume that the step size $\eta^\prime$ is upper bounded.
In the G-ECL and Gossip algorithm, there exists a step size $\eta^\prime$ that satisfies the assumptions of Lemmas \ref{lemma:consensus_e_convex}, \ref{lemma:simplified_consensus_convex}, \ref{lemma:recursion_convex},
\ref{lemma:consensus_e_nonconvex}, and \ref{lemma:recursion_nonconvex}
because the mixing matrix $\mathbf{W}$ and step size $\eta^\prime$ can be set independently as hyperparameters.
However, in the ECL, $\mathbf{W}$ and $\eta^\prime$ are determined by $\eta$ and $\{ \alpha_{i|j} \}_{ij}$ as in Eq. \eqref{eq:definition_of_eta_prime}
and depend on one another.
That is, $\eta^\prime$, $p$ in Assumption \ref{assumption:mixing_matrix}, and $b^\prime (\coloneqq \|\mathbf{W} - \mathbf{I} \|^2)$
depend on each other.
Therefore, to prove that the convergence rates of the ECL are that shown in Theorem \ref{theoram:convergence_rate},
we need to prove that there exists a step size $\eta^\prime$ that satisfies the assumptions of Lemmas \ref{lemma:consensus_e_convex}, \ref{lemma:simplified_consensus_convex}, \ref{lemma:recursion_convex},
\ref{lemma:consensus_e_nonconvex}, and \ref{lemma:recursion_nonconvex}.
In this work, it is left to future work to prove whether there exists a step size $\eta^\prime$ that satisfies the assumptions of Lemmas \ref{lemma:consensus_e_convex}, \ref{lemma:simplified_consensus_convex}, \ref{lemma:recursion_convex},
\ref{lemma:consensus_e_nonconvex}, and \ref{lemma:recursion_nonconvex}
and we experimentally demonstrate that the ECL converges at the same convergence rate as the G-ECL in Sec. \ref{sec:experimental_results}.

\end{document}